\pgfplotsset{width=10cm,compat=1.9}
\numberwithin{equation}{section}
\def\@setauthors{%
  \begingroup
  \def\thanks{\protect\thanks@warning}%
  \trivlist
  \centering\footnotesize \@topsep30\p@\relax
  \advance\@topsep by -\baselineskip
  \item\relax
  \author@andify\authors
  \def\\{\protect\linebreak}

  \normalsize\lowercase{\authors}%
  
	\ifx\@empty\contribs
  \else
    ,\penalty-3 \space \@setcontribs
    \@closetoccontribs
  \fi
  \endtrivlist
  \endgroup
}
\def\@settitle{\begin{center}
\LARGE\lowercase{\@title}
  \end{center}%
}
\definecolor{lightblue}{HTML}{2B77A4}
\definecolor{darkred}{HTML}{9E0D0D}
\newtheorem{thm}{Theorem}[section]
\newtheorem{lma}[thm]{Lemma}
\newtheorem{cor}[thm]{Corollary}
\newtheorem{prop}[thm]{Proposition}
\newtheorem{ques}[thm]{Question}
\renewcommand{\epsilon}{\varepsilon}
\newcommand{\eps}{\varepsilon}
\newcommand{\rd}{\mathbb{R}^d}
\renewcommand{\geq}{\geqslant}
\renewcommand{\leq}{\leqslant}
\newcommand{\hd}{\dim_{\textup{H}}}
\newcommand{\pd}{\dim_{\textup{P}}}
\newcommand{\fs}{\dim^\theta_{\mathrm{F}}}
\newcommand{\fd}{\dim_{\mathrm{F}}}
\newcommand{\sd}{\dim_{\mathrm{S}}}
\title{The Fourier spectrum  and sumset type problems}
\author{Jonathan M. Fraser\\ \\
 University of St Andrews, Scotland\\
Email: jmf32@st-andrews.ac.uk}
\thanks{The  author was  financially supported by an \emph{EPSRC Standard Grant} (EP/R015104/1),  a  \emph{Leverhulme Trust Research Project Grant} (RPG-2019-034) and  an RSE Sabbatical Research Grant (70249).}
\begin{document}


\maketitle
\thispagestyle{empty}

\begin{abstract}
We introduce and study the \emph{Fourier spectrum} which is a continuously parametrised family  of dimensions living between the Fourier dimension and the Hausdorff dimension for both sets and measures. We establish some fundamental  theory and motivate the concept via several applications, especially to sumset type problems.  For example, we study dimensions of convolutions and sumsets, and solve the distance set problem for sets satisfying certain Fourier analytic conditions. 
\\ \\ 
\emph{Mathematics Subject Classification}: primary: 42B10, 28A80; secondary: 28A75, 28A78.
\\
\emph{Key words and phrases}:  Fourier spectrum, Fourier transform, Fourier dimension, Sobolev dimension, Hausdorff dimension, convolution, distance set, sumset.
\end{abstract}

\tableofcontents

\section{The Fourier spectrum: definition and basic properties}

The Hausdorff dimension (of a set or measure) is a fundamental geometric notion describing   fine scale structure.  The Fourier dimension, on the other hand, is an analytic notion which captures rather different features.  Both the Hausdorff and Fourier dimensions have  numerous applications in, for example, ergodic theory,  number theory, harmonic analysis and probability theory. The Fourier dimension of a set is bounded above by its  Hausdorff dimension and is much more sensitive to, for example,  arithmetic resonance and curvature. Indeed, the middle third Cantor set has Fourier dimension 0 because it possesses too much arithmetic structure, and a line segment embedded in the plane has Fourier dimension 0 because  it does not possess enough curvature. We note  that the Hausdorff dimension of both of these sets is strictly positive.  The line segment example also shows that the Fourier dimension is sensitive to the ambient space in a way that the Hausdorff dimension is not since a line segment in $\mathbb{R}$ has Fourier dimension 1.  

The purpose of this paper is to introduce, study, and motivate a continuously parametrised family of dimensions which  vary between the Fourier and Hausdorff dimensions. The hope is that the resulting `Fourier spectrum' will reveal more analytic and geometric information than the two notions considered in isolation and thus be amenable to applications in areas where both notions play a role.

We begin by defining the Fourier spectrum and deriving several fundamental properties including continuity (Theorems \ref{cty0} and \ref{ctyX}) and how   it depends on the ambient space  (Theorem \ref{embedding}) noting that the `endpoints' must behave rather differently.  We go on to put the work in a wider context, especially in relation to average Fourier dimensions and Strichartz type bounds (Theorems \ref{stric3} and \ref{stric1}).  During the above analysis we also derive (or estimate) the Fourier spectrum explicitly for several examples including Riesz products (Theorem \ref{riesz1} and Corollary \ref{riesz2}), certain self-similar measures (Corollaries \ref{selfsim1} and \ref{selfsim2}), and measures on various curves (Corollary \ref{hyperplane} and Theorem \ref{xp}). 

After establishing some fundamental theory, we move towards applications of the Fourier spectrum, especially concerning sumsets, convolutions, distance sets and certain random sets.  A rough heuristic which emerges is that when the Fourier spectrum is not (the restriction of) an affine function,   it provides more information than the Hausdorff and Fourier dimension on their own and this leads to new estimates in various contexts.  For example, the Sobolev dimension of a measure increases under convolution with itself if and only if the Fourier spectrum is not (the restriction of)  a linear function (Corollary \ref{conv2}) and when the Fourier spectrum of a measure is not an affine function, it provides better estimates for the Hausdorff dimension of the distance set  of its support than the Hausdorff and Fourier dimension provide on their own via Mattila integrals (Theorem \ref{maindistance}).  As a result we solve the distance set problem for sets satisfying certain Fourier analytic conditions.  A simple special case shows  that if $\mu$ is a finite Borel  measure on $\rd$ with $\int |\widehat \mu |^4 <\infty$, then the distance set of the support of $\mu$ has positive Lebesgue measure (Corollary \ref{cute}).   We also use the Fourier spectrum to give conditions ensuring a measure is `Sobolev improving' (Corollary \ref{sobolevimproving}),  to give estimates for the Hausdorff dimension of certain random constructions where the Fourier dimension alone provides only trivial estimates (Corollary \ref{randomsum}), and to provide a one line proof (and extension)  of a well-known connection between moment analysis and Fourier dimension in random settings (Lemma \ref{moment}).

The idea to introduce a continuum of dimensions in-between a given pair of `fractal  dimensions' is  part of a growing programme sometimes referred to as `dimension interpolation'.  Previous examples include the \emph{Assouad spectrum} which lives in-between the (upper) box-counting and Assouad dimensions \cite{assouad} and the \emph{intermediate dimensions} \cite{intermediate} which live in-between the Hausdorff and box-counting dimensions.  The Fourier spectrum is of a rather different flavour since the aforementioned notions are defined via coverings.  Despite their  recent inception, the Assouad spectrum and intermediate dimensions are proving useful tools in a growing range of (often unexpected) areas, for example, in quasi-conformal mapping theory \cite{tyson} and in analysis of spherical maximal functions \cite{joris}. We believe this will also be the case for the Fourier spectrum.

\subsection{Background: Energy,   Fourier transforms, and dimension}

Throughout the  paper, we write $A \lesssim B$ to mean there exists a constant $c >0$ such that $A \leq cB$.  The implicit constants $c$ are suppressed to improve exposition.  If we wish to emphasise that these constants depend on another parameter $\lambda$, then we will write $A \lesssim_\lambda B$.  We also write $A \gtrsim B$ if $B \lesssim A$ and $A \approx B$ if $A \lesssim B$ and $A \gtrsim B$.

Let $\mu$ be a finite Borel measure on $\mathbb{R}^d$ with support denoted by $\textup{spt}(\mu)$.  
For $s \geq 0$, the $s$-energy of $\mu$ is  given by 
\[
\mathcal{I}_s(\mu) = \int \int \frac{d \mu(x) \, d \mu(y)}{|x-y|^s} 
\]
and can be used to estimate the Hausdorff dimension of $\mu$ and its support (the so-called `potential theoretic method').  Indeed, if $s \geq 0$ is such that $\mathcal{I}_s(\mu) <\infty$, then
\[
\hd \textup{spt}(\mu) \geq \hd \mu \geq s
\]
where $\hd$ denotes Hausdorff dimension. In fact, this is a precise characterisation of Hausdorff dimension since for all Borel sets $X$ and $s < \hd X$, there exists a finite Borel measure $\mu$ on $X$ such that $\mathcal{I}_s(\mu) <\infty$.   See \cite{falconer, mattila} for more on Hausdorff dimension, energy and the potential theoretic method.

There is an elegant connection between energy (and thus Hausdorff dimension) and the Fourier transform.   The Fourier transform of $\mu$ is the function $\widehat \mu : \rd \to \mathbb{C}$ given  by
\[
\widehat \mu (z) = \int e^{-2\pi i z \cdot x} \, d \mu(x).
\]
In fact, for $0<s<d$ 
\[
\mathcal{I}_s(\mu) \approx_{s,d} \int_{\mathbb{R}^d} |\widehat \mu(z)  |^{2} |z|^{s-d} \, dz,
\]
see \cite[Theorem 3.1]{mattila}. Therefore, if $|\widehat \mu(z)  |\lesssim |z|^{-s/2}$ for some $s \in (0,d)$, then $\mathcal{I}_s(\mu) <\infty$ and $\hd \mu \geq s$.  This motivates the \emph{Fourier dimension}, defined by
\[
\fd \mu = \sup\{ s \geq 0 : |\widehat \mu(z)  |\lesssim |z|^{-s/2}\}
\]
for measures and
\[
\fd X =   \sup\big\{\min\{ \fd \mu,  d\}  : \textup{spt}(\mu) \subseteq X  \big\}
\]
for  sets $X \subseteq \rd$. Here the supremum is taken over finite Borel measures $\mu$ supported by   $X$.  See \cite{modfourier} for some interesting alternative formulations of the Fourier dimension, including the modified Fourier dimension and the compact Fourier dimension.  For non-empty sets   $X \subseteq \rd$ 
\[
0 \leq \fd X \leq \hd X \leq d
\]
and $X$ is a \emph{Salem set} if and only if $\fd X = \hd X$.  See \cite{mattila} for more on the Fourier dimension and \cite{grafakos} for background on Fourier analysis more generally. There are many random constructions giving rise to Salem sets but non-trivial deterministic examples are much harder to come by and in general it is rather easy for a set to fail to be Salem.  For example, a line segment in $\mathbb{R}$ is Salem but in $\mathbb{R}^2$ it is not.  Further, the middle third Cantor set, many self-affine sets, the cone in $\mathbb{R}^3$, and the graph of Brownian motion all fail to be Salem sets.    In this article we are interested in   sets which are \emph{not} Salem and wish to explore the difference between $\fd X$ and $\hd X$ in a novel and  meaningful way.

\subsection{The Fourier spectrum}

We  exploit the connection between Fourier dimension and Hausdorff dimension via energy to define  a continuum of `dimensions' lying in-between the Fourier and Hausdorff dimensions. Let $\mu$ be a finite Borel measure on $\mathbb{R}^d$.  For $\theta \in [0,1]$ and $s \geq 0$, we define energies
\[
\mathcal{J}_{s,\theta}(\mu) = \left( \int_{\mathbb{R}^d} |\widehat \mu(z)  |^{2/\theta} |z|^{s/\theta-d} \, dz \right)^{\theta},
\]
where we adopt the convention that
\[
\mathcal{J}_{s,0}(\mu) = \sup_{z \in \mathbb{R}^d} |\widehat \mu(z)  |^{2} |z|^{s}.
\]
We then define the \emph{Fourier spectrum} of $\mu$ at $\theta$ by 
\[
\fs \mu = \sup\{s \geq 0 : \mathcal{J}_{s,\theta}(\mu) < \infty\}
\]
where we write $\sup \o = 0$.  Note that 
\[
\mathcal{J}_{s,1}(\mu) = \int_{\mathbb{R}^d} |\widehat \mu(z)  |^{2} |z|^{s-d} \, dz 
\]
is  the familiar \emph{Sobolev energy} and, therefore, $\dim^1_{\mathrm{F}} \mu = \sd \mu$ where $\sd \mu$ is the \emph{Sobolev dimension} of $\mu$, see \cite[Section 5.2]{mattila}.  Moreover,  $\dim^0_{\mathrm{F}} \mu = \dim_{\mathrm{F}} \mu$ is the Fourier dimension of $\mu$. 

For   sets  $X \subseteq \mathbb{R}^d$, we define  the \emph{Fourier spectrum}   of $X$ at $\theta$ by 
\[
\fs X =   \sup\big\{ \min\{\fs \mu , d\} :  \textup{spt}(\mu) \subseteq X  \big\}.
\]
Here the supremum is again taken over finite Borel measures $\mu$ supported by  $X$.  One immediately sees that $\dim^0_{\mathrm{F}} X = \dim_{\mathrm{F}} X$ is the Fourier dimension of $X$. Moreover, using  $\mathcal{J}_{s,1}(\mu)  \approx  \mathcal{I}_s(\mu)$ for $0<s<d$ where  $\mathcal{I}_s(\mu)$ is the standard energy (see \cite[Theorem 3.1]{mattila}),  
 $\dim^1_{\mathrm{F}} X = \dim_{\mathrm{H}} X$ returns  the Hausdorff dimension for Borel sets $X$. The quantity $\min\{\sd \mu , d\} = \min\{ \dim^1_{\mathrm{F}} \mu, d\}$ is the \emph{energy dimension} or \emph{lower correlation dimension} of $\mu$. It is easy to see (e.g. Theorem \ref{concave} below) that
\[
\dim_{\mathrm{F}} \mu  \leq \fs \mu \leq \dim_{\mathrm{S}} \mu 
\]
and
\[
\dim_{\mathrm{F}} X  \leq \fs X \leq \dim_{\mathrm{H}} X
\]
for all $\theta \in (0,1)$.

The Fourier spectrum can be defined in terms of $L^p$ spaces in a convenient way.  Define a measure $m_d$ on $\mathbb{R}^d$  by  $d m_d =\min\{|z|^{-d}, 1\} \, dz$ and a family of functions  $f^s_\mu : \mathbb{R}^d \to \mathbb{R}$ by $f_\mu^s(z) =|\widehat \mu(z)|^{2} |z|^{s}$.  Then 
\[
\mathcal{J}_{s,\theta}(\mu) \approx \| f^s_\mu \|_{L^{1/\theta}(m_d)}
\]
for $s>0$ and so
\[
\fs \mu = \sup\left\{ s: f^s_\mu \in L^{1/\theta}(m_d) \right\}.
\]

It could be interesting to consider a \emph{modified Fourier spectrum}, following the modified Fourier dimension defined in \cite{modfourier}, but we do not pursue this here.  We will be more focused on the Fourier spectrum of measures and many of our results for sets would also hold for the modified or compact variants considered by \cite{modfourier}. We leave details to the reader.

Finally, we hope that our use of `Fourier spectrum' does not cause confusion with other uses of the phrase in the literature, for example \cite{bourgain}.  In a previous draft of the paper we opted for the `Fourier dimension spectrum' to avoid this issue, but in practice this name was too cumbersome and was always shortened.  Further, we prefer to align the \emph{Fourier} spectrum philosophically with the \emph{Assouad} spectrum and hence the general programme of dimension interpolation.

\begin{figure}[H]
							\includegraphics[width=\textwidth]{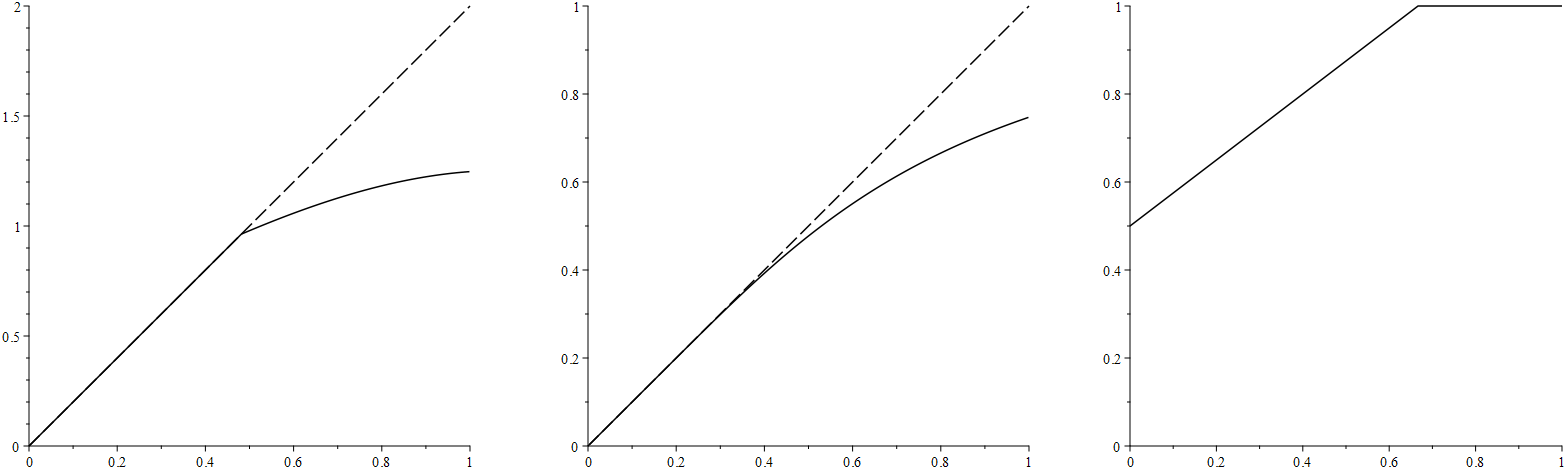} \\ \vspace{3mm}

			\caption{\emph{Three examples.}  Left: the Fourier  spectrum of a measure with positive Fourier dimension in the plane  embedded into $\mathbb{R}^3$, see Theorem  \ref{embedding}.  Centre: the Fourier spectrum of a Riesz product from Corollary \ref{riesz2} with $a=0.8, \lambda=3$.  Right: the Fourier spectrum of Lebesgue measure lifted onto the graph of $x \mapsto x^4$, see Theorem \ref{xp}.}
			\end{figure}

\subsection{Analytic  properties: continuity, concavity} \label{basicsection}

The first task is to examine fundamental properties of the function $\theta \mapsto \fs \mu$.  The results we prove in this section will be used throughout the paper as we develop the theory towards more sophisticated applications.

\begin{thm} \label{concave}
Let $\mu$ be a finite Borel measure and $X$ a non-empty set.  Then $\fs \mu$  is a non-decreasing concave function of $\theta \in [0,1]$.  In particular, $\fs \mu$ is continuous on $(0,1]$.  Further, $\fs X$ is non-decreasing on $[0,1]$ and continuous on $(0,1]$ but may not be concave.  
\end{thm}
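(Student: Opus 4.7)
The plan is to establish all the measure claims from a single Hölder-type interpolation
\[
\mathcal{J}_{s,\theta}(\mu) \leq \mathcal{J}_{s_1,\theta_1}(\mu)^{\lambda}\mathcal{J}_{s_2,\theta_2}(\mu)^{1-\lambda}
\]
valid whenever $(s,\theta) = \lambda(s_1,\theta_1) + (1-\lambda)(s_2,\theta_2)$ with $\theta_1,\theta_2\in[0,1]$. For $\theta_1,\theta_2 > 0$ I would factor the integrand as
\[
|\widehat\mu(z)|^{2/\theta}|z|^{s/\theta - d} = \bigl(|\widehat\mu|^{2/\theta_1}|z|^{s_1/\theta_1 - d}\bigr)^{\lambda\theta_1/\theta}\bigl(|\widehat\mu|^{2/\theta_2}|z|^{s_2/\theta_2 - d}\bigr)^{(1-\lambda)\theta_2/\theta}
\]
and apply the generalised Hölder inequality with weights $\lambda\theta_1/\theta$ and $(1-\lambda)\theta_2/\theta$, which sum to $1$ precisely because $\theta = \lambda\theta_1 + (1-\lambda)\theta_2$. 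The endpoint case $\theta_1 = 0$ is treated separately by inserting the pointwise bound $|\widehat\mu|^2|z|^{s_1} \leq \mathcal{J}_{s_1,0}(\mu)$ as a bounded factor and reducing to a single $\theta_2$-integral. Concavity of $\theta\mapsto\fs\mu$ then follows at once: for $s_i$ just below the spectrum value at $\theta_i$, both factors on the right are finite, so $\fs\mu \geq \lambda s_1 + (1-\lambda)s_2$; taking the sup over admissible $s_i$ gives the concavity inequality.

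Monotonicity of $\fs\mu$ needs a separate direct Hölder argument, because concavity on $[0,1]$ does not by itself force monotonicity. For $\theta_1 < \theta_2$, $s < \dim^{\theta_1}_{\mathrm{F}}\mu$, and $\eps > 0$, I would split
\[
|\widehat\mu|^{2/\theta_2}|z|^{(s-\eps)/\theta_2 - d} = \bigl(|\widehat\mu|^{2/\theta_1}|z|^{s/\theta_1 - d}\bigr)^{\theta_1/\theta_2}\cdot |z|^{-\eps/\theta_2 - d(\theta_2-\theta_1)/\theta_2}
\]
and apply Hölder with conjugate exponents $\theta_2/\theta_1$ and $\theta_2/(\theta_2 - \theta_1)$. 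The residual pure-power integral reduces to $\int_{|z|>1}|z|^{-\eps/(\theta_2-\theta_1)-d}\,dz<\infty$, and the region $|z|\leq 1$ is controlled using boundedness of $\widehat\mu$ (provided $s > \eps$). This gives $\dim^{\theta_2}_{\mathrm{F}}\mu \geq s - \eps$ for arbitrary $\eps > 0$, hence monotonicity. Continuity on $(0,1)$ is then automatic from concavity; at $\theta = 1$ I would sandwich $\fs\mu$ between $(1-\theta)\fd\mu+\theta\sd\mu$ (concavity along the secant through the endpoints) and $\sd\mu$ (monotonicity), both of which tend to $\sd\mu$ as $\theta \to 1^-$.

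For sets, $\fs X$ is non-decreasing as a supremum of non-decreasing functions. To obtain continuity on $(0,1]$ I would extract a uniform Lipschitz estimate from the measure-concavity: the chord slope of a concave function on $[0,\theta_1]$ starting at $\fd\mu \geq 0$ is at most $\dim^{\theta_1}_{\mathrm{F}}\mu/\theta_1 \leq d/\theta_1$, and concavity bounds the slope on $[\theta_1,\theta_2]$ by the same quantity, yielding
\[
\min\{\dim^{\theta_2}_{\mathrm{F}}\mu, d\} - \min\{\dim^{\theta_1}_{\mathrm{F}}\mu, d\} \leq \frac{d(\theta_2-\theta_1)}{\theta_1}
\]
uniformly in $\mu$. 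Taking the supremum over $\mu$ transfers this bound to $\fs X$ and gives continuity on $(0,1]$. The failure of concavity for sets presumably emerges from the embedding construction announced in Theorem \ref{embedding} (the leftmost graph in the figure suggests a piecewise profile that cannot be concave). The main technical obstacle is keeping the exponents straight in the two Hölder applications; the conceptual pitfall, and the reason monotonicity requires its own dedicated argument, is that concavity on a closed interval does not on its own imply monotonicity.
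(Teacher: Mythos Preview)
Your approach is essentially the paper's: concavity via H\"older interpolation on the integrand, monotonicity via a separate interpolation with an $\eps$-loss, and continuity on $(0,1]$ from concavity plus monotonicity. The only stylistic difference is in the monotonicity step: the paper packages the argument as Jensen's inequality against the probability measure $dm_d^\eps = c\min\{|z|^{-(d+\eps)},1\}\,dz$, whereas you split off a residual pure-power factor and apply H\"older with exponents $\theta_2/\theta_1$, $\theta_2/(\theta_2-\theta_1)$. Both are the same interpolation in disguise. Your explicit uniform Lipschitz bound for the set case is more detailed than the paper, which simply declares the set claims ``clear''.

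One correction: the leftmost figure cannot be your non-concavity witness. It depicts the spectrum of a \emph{measure} after embedding, and by the very theorem you are proving, measure spectra are always concave; concretely, $\min\{k\theta, a+b\theta\}$ is a minimum of linear functions and hence concave. Non-concavity for sets must come from the \emph{supremum} over measures in the definition of $\fs X$ (a sup of concave functions need not be concave), not from the $\min\{k\theta,\cdot\}$ in Theorem~\ref{embedding}. The paper, like you, does not actually exhibit such an example inside this proof.
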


\begin{proof}
The claims for $X$ are clear once the claims for $\mu$ are established.  We first prove concavity.  Fix $0 \leq \theta_0 < \theta_1 \leq 1$ and let $\theta \in (\theta_0, \theta_1)$.  Let $s_0<\dim_\mathrm{F}^{\theta_0} \mu$,  $s_1<\dim_\mathrm{F}^{\theta_1} \mu$ and
\[
s= s_0 \frac{\theta_1-\theta}{\theta_1-\theta_0} + s_1\frac{\theta-\theta_0}{\theta_1-\theta_0}
\]
and assume without loss of generality that $s>0$.  Define $m_d$ by $dm_d =\min\{|z|^{-d}, 1\} \, dz$.  Then
\begin{align*}
\mathcal{J}_{s,\theta}(\mu)^{1/\theta} &\approx \int_{\mathbb{R}^d} |\widehat \mu(z)  |^{2/\theta} |z|^{s/\theta}   \, dm_d(z) \\
&  =  \int_{\mathbb{R}^d}  \left( |\widehat \mu(z)  |^{2/\theta_0} |z|^{s_0/\theta_0}\right)^{  \frac{\theta_0(\theta_1-\theta)}{\theta(\theta_1-\theta_0)} } \, \left( |\widehat \mu(z)  |^{2/\theta_1} |z|^{s_1/\theta_1}\right)^{ \frac{\theta_1(\theta-\theta_0)}{\theta(\theta_1-\theta_0)} } \, dm_d(z) \\
&  \leq  \left( \int_{\mathbb{R}^d} |\widehat \mu(z)  |^{2/\theta_0} |z|^{s_0/\theta_0}  \, dm_d(z) \right)^{\frac{\theta_0(\theta_1-\theta)}{\theta(\theta_1-\theta_0)}}  \ \left( \int_{\mathbb{R}^d}  |\widehat \mu(z)  |^{2/\theta_1} |z|^{s_1/\theta_1}  \, dm_d(z) \right)^{\frac{\theta_1(\theta-\theta_0)}{\theta(\theta_1-\theta_0)}} \\
&<\infty
\end{align*}
by H\"older's inequality and choice of $s_0$ and $s_1$.  This establishes  $\fs \mu \geq s$, proving concavity of $\fs \mu$.  

Next we prove that $\fs \mu$ is non-decreasing. Fix $0 \leq \theta_0 < \theta_1 \leq 1$.  Let $\eps>0$ and define $m_d^\eps$ by  $dm_d^\eps =c \min\{|z|^{-(d+\varepsilon)}, 1\} \, dz$ where $c$ is chosen such that $m_d^\eps$ is a probability measure.  Then, using Jensen's inequality,
\begin{align*}
\mathcal{J}_{s,\theta_1}(\mu)^{1/\theta_0} &\approx \left(\int_{\mathbb{R}^d} |\widehat \mu(z)  |^{2/\theta_1} |z|^{s/\theta_1} |z|^{\eps}  \, dm_d^\eps(z) \right)^{\theta_1/\theta_0} \\
&\leq \int_{\mathbb{R}^d} |\widehat \mu(z)  |^{2/\theta_0} |z|^{s/\theta_0} |z|^{\eps\theta_1/\theta_0}  \, dm_d^\eps(z)  \\
&= \int_{\mathbb{R}^d} |\widehat \mu(z)  |^{2/\theta_0} |z|^{(s +\eps( \theta_1-\theta_0))/\theta_0}   |z|^{\eps} \, dm_d^\eps(z)  \\
&\approx \mathcal{J}_{s+\eps( \theta_1-\theta_0),\theta_0}(\mu)^{1/\theta_0} \\
&<\infty
\end{align*}
provided $s< \dim_\mathrm{F}^{\theta_0} \mu -\eps(\theta_1-\theta_0)$.  Letting $\eps$ tend to 0 proves $\dim_\mathrm{F}^{\theta_1} \mu \geq \dim_\mathrm{F}^{\theta_0} \mu $ as required. 
Finally, a non-decreasing concave function on $[0,1]$ is immediately seen to be continuous on $(0,1]$.
\end{proof}

Later we will show that the Fourier spectrum is not necessarily continuous at $\theta = 0$, see Proposition \ref{discont}.  However, continuity can be established across the whole range $[0,1]$ assuming only very mild conditions. In order to prove continuity of the Fourier spectrum at $\theta=0$ (and thus over the full range $[0,1]$)  we need to assume H\"older continuity of the Fourier transform.  First we show that this holds assuming only a mild decay condition on the measure.  For compactly supported measures the Fourier transform is Lipschitz (see \cite[(3.19)]{mattila})  but this is true for many non-compactly supported measures too.

\begin{lma} \label{holder}
Let $\mu$ be a finite Borel measure on $\mathbb{R}^d$     such that $|z|^\alpha \in L^1(\mu)$ for some $\alpha \in (0,1]$.  Then $\widehat \mu$ is $\alpha$-H\"older. 
\end{lma}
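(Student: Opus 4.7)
The plan is to unfold the definition of the Fourier transform, write the difference $\widehat{\mu}(z)-\widehat{\mu}(w)$ as a single integral against $\mu$, and control the integrand by a product of $|z-w|^\alpha$ and $|x|^\alpha$. Concretely, I would start from
\[
\widehat\mu(z) - \widehat\mu(w) = \int \bigl(e^{-2\pi i z \cdot x} - e^{-2\pi i w \cdot x}\bigr)\, d\mu(x),
\]
so that, after taking absolute values and moving inside,
\[
|\widehat\mu(z)-\widehat\mu(w)| \leq \int \bigl|e^{-2\pi i (z-w)\cdot x} - 1\bigr|\, d\mu(x).
\]

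The key step is the elementary pointwise inequality
\[
|e^{it}-1| \leq 2^{1-\alpha} |t|^\alpha \qquad (t \in \mathbb{R},\ \alpha \in (0,1]),
\]
which follows by interpolating the trivial bound $|e^{it}-1|\leq 2$ and the Lipschitz bound $|e^{it}-1|\leq |t|$ using $\min(1,s) \leq s^\alpha$ for $s \geq 0$. Applying this with $t = 2\pi (z-w)\cdot x$ and estimating $|(z-w)\cdot x|\leq |z-w||x|$ yields
\[
\bigl|e^{-2\pi i (z-w)\cdot x} - 1\bigr| \leq 2^{1-\alpha}(2\pi)^\alpha |z-w|^\alpha |x|^\alpha.
\]

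Plugging this back into the integral gives
\[
|\widehat\mu(z)-\widehat\mu(w)| \leq 2^{1-\alpha}(2\pi)^\alpha |z-w|^\alpha \int |x|^\alpha\, d\mu(x),
\]
and the hypothesis $|x|^\alpha \in L^1(\mu)$ makes the remaining integral finite. This is exactly the $\alpha$-Hölder estimate. There is essentially no obstacle; the only mildly non-obvious point is the interpolation inequality, which is just the $\alpha$-power of the minimum of the two natural bounds on $|e^{it}-1|$. The Lipschitz case stated in \cite[(3.19)]{mattila} for compactly supported $\mu$ corresponds to $\alpha = 1$, and the argument here extends it verbatim whenever the weaker moment condition $\int |x|^\alpha\, d\mu < \infty$ holds.
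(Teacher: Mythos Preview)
Your proof is correct and follows essentially the same approach as the paper: bound $|\widehat\mu(z)-\widehat\mu(w)|$ by $\int |e^{-2\pi i(z-w)\cdot x}-1|\,d\mu(x)$, then control $|e^{it}-1|$ by a constant times $|t|^\alpha$ and use the moment hypothesis. The paper splits into the cases $|z-w|\geq 1$ and $|z-w|\leq 1$ before interpolating, whereas your single inequality $|e^{it}-1|\leq 2^{1-\alpha}|t|^\alpha$ handles both at once, but this is a cosmetic difference.
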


\begin{proof}
For $x, y \in \rd$ with $|x-y| \geq 1$, it is immediate that $|\widehat \mu(x) - \widehat\mu(y) | \leq 2 \leq 2|x-y|$ and so we may assume $|x-y| \leq 1$.  Then,
\begin{align*}
|\widehat \mu(x) - \widehat\mu(y) |  \leq  \int \left\lvert 1-e^{-2\pi i (x-y) \cdot z } \right\rvert \, d \mu(z)&  \lesssim \int \left\lvert 1-e^{-2\pi i (x-y) \cdot z } \right\rvert^\alpha \, d \mu(z) \\
&\lesssim  \int |x-y|^\alpha|z|^\alpha  \, d \mu(z) \\
&\lesssim |x-y|^\alpha
\end{align*}
as required.
\end{proof}

\begin{thm} \label{cty0}
Let $\mu$ be a finite Borel measure on $\rd$  such that $\widehat \mu$ is $\alpha$-H\"older.  Then
\[
\fs \mu \leq \fd \mu +  d\left(1+\frac{\fd \mu}{2\alpha} \right)\theta.
\]
In particular, $\fs \mu$  is Lipschitz continuous at $\theta=0$ and therefore Lipschitz continuous on $[0,1]$.
\end{thm}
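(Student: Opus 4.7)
The plan is to establish the upper bound by exhibiting, for every $u$ strictly greater than $\fd\mu + d(1+\fd\mu/(2\alpha))\theta$, a divergent lower bound on $\mathcal{J}_{u,\theta}(\mu)$, which forces $\fs\mu \le u$. Once the resulting Lipschitz control at $\theta=0$ is in hand, Lipschitz continuity on all of $[0,1]$ is automatic from Theorem \ref{concave}: a concave non-decreasing function on $[0,1]$ with finite right derivative at $0$ is Lipschitz on the whole interval.

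I fix $s > \fd \mu$. By definition of Fourier dimension, the function $z \mapsto |\widehat\mu(z)|\,|z|^{s/2}$ is unbounded on $\rd$, and since $\widehat\mu$ is continuous it is bounded on compact sets, so this unboundedness produces a sequence $z_n \in \rd$ with $|z_n| \to \infty$ and $|\widehat\mu(z_n)| \ge |z_n|^{-s/2}$. The $\alpha$-H\"older hypothesis then propagates this pointwise lower bound to a ball: there exists a constant $c>0$ such that
\[
|\widehat\mu(z)| \ge \tfrac{1}{2}|z_n|^{-s/2} \qquad \text{for all } z \in B(z_n,r_n),\ r_n := c\,|z_n|^{-s/(2\alpha)}.
\]
Because $r_n \to 0$ while $|z_n|\to\infty$, I may pass to a subsequence making the balls pairwise disjoint, contained in the region $|z|\ge 1$, and satisfying $|z|\approx |z_n|$ uniformly on $B(z_n,r_n)$.

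Restricting the integral defining $\mathcal{J}_{u,\theta}(\mu)$ to the disjoint union of these balls yields
\[
\mathcal{J}_{u,\theta}(\mu)^{1/\theta} \gtrsim \sum_n r_n^{d}\cdot |z_n|^{-s/\theta}\cdot |z_n|^{u/\theta - d} \approx \sum_n |z_n|^{\,u/\theta \,-\, s/\theta \,-\, sd/(2\alpha) \,-\, d}.
\]
The exponent is nonnegative precisely when $u \ge s\bigl(1+d\theta/(2\alpha)\bigr) + d\theta$, in which case every term is $\ge 1$ and the sum diverges. This forces $\fs\mu \le s\bigl(1+d\theta/(2\alpha)\bigr) + d\theta$, and sending $s \downarrow \fd\mu$ delivers the stated inequality, which rearranges to $\fs\mu \le \fd\mu + d(1+\fd\mu/(2\alpha))\theta$.

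The step I expect to require the most care is the balancing between the H\"older radius $r_n \sim |z_n|^{-s/(2\alpha)}$ and the spacing of the $z_n$: the former dictates the $d/(2\alpha)$ factor in the final exponent and must be kept small enough for the balls to be disjoint and for $|z|^{u/\theta-d}$ to be essentially constant on each ball, while the latter must be loose enough to thin out the sequence. Both are arranged simultaneously by choosing $|z_{n+1}|\ge 2|z_n|$; after that the argument is bookkeeping of exponents.
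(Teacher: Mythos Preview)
Your proposal is correct and follows essentially the same route as the paper: pick $s>\fd\mu$, extract a sequence $z_n$ with $|\widehat\mu(z_n)|\ge |z_n|^{-s/2}$, use the $\alpha$-H\"older condition to thicken each point to a ball of radius $\approx |z_n|^{-s/(2\alpha)}$, pass to a disjoint subsequence, and bound $\mathcal{J}_{u,\theta}(\mu)^{1/\theta}$ from below by the resulting divergent series when $u \ge s(1+d\theta/(2\alpha))+d\theta$. The only differences are notational ($s,u$ in place of the paper's $t,s$) and that you make explicit the uniform comparability $|z|\approx|z_n|$ on each ball, which the paper leaves implicit.
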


\begin{proof}
 Let $t>\fd \mu$ which guarantees the existence of a sequence $z_k \in \mathbb{R}^d$ with $|z_k| \geq 1$ and $|z_k| \to \infty$  such that for all $k$
\[
|\widehat \mu (z_k)| \geq |z_k|^{-t/2}.
\]
Since $\mu$ is   $\alpha$-H\"older,  there exists $c=c(\mu) \in (0,1)$ such that
\[
|\widehat \mu (z)| \geq |z_k|^{-t/2}/2
\]
for all $z \in B(z_k,c|z_k|^{-t/(2\alpha)})$.  By passing to a  subsequence if necessary we may assume that the balls $B(z_k,c|z_k|^{-t/(2\alpha)})$ are pairwise disjoint. Therefore
\begin{align*}
\mathcal{J}_{s,\theta}(\mu)^{1/\theta} &\geq \sum_k \int_{B(z_k,c|z_k|^{-t/(2\alpha)})} |\widehat \mu(z)  |^{2/\theta} |z|^{s/\theta-d} \, dz  \\
&\gtrsim \sum_k |z_k|^{-td/(2\alpha)} |z_k|^{-t/\theta} |z_k|^{s/\theta - d} = \infty
\end{align*}
whenever $s\geq  d \theta + t+td\theta/(2\alpha)$.  This proves
\[
\fs \mu \leq \fd \mu +  d\left(1+\frac{\fd \mu}{2\alpha} \right)\theta 
\]
as required.  The final conclusion  that $\fs \mu$  is Lipschitz continuous   on $[0,1]$ follows immediately from  Lipschitz continuity  at $\theta=0$ together with the fact  that  $\fs \mu$ is non-decreasing and concave, see Theorem \ref{concave}.
\end{proof}

Later we provide an example showing that the   bounds from Theorem \ref{cty0} are sharp in the case $\fd \mu = 0$ and $\alpha = 1$, see Corollary \ref{firstsharp}.  

\begin{ques}
Are the bounds from Theorem \ref{cty0} sharp?  Are they sharp when $\mu$ is compactly supported and $\fd \mu >0$?
\end{ques}

 One benefit of Theorem \ref{cty0} is the demonstration that positive Fourier dimension of a measure can always be observed by averaging (in the case when $\widehat \mu$ is H\"older).  That is, provided $\widehat \mu$ is H\"older, $\fd \mu >0$ if and only if $\fs \mu>d \theta$ for some $\theta >0$.  This is potentially useful since positive Fourier dimension requires uniform estimates on $|\widehat \mu|$ which are \emph{a priori} harder to obtain than  estimates on the `averages'  $\mathcal{J}_{s,\theta}(\mu)$.

Using  Theorem \ref{cty0} we immediately get continuity of the Fourier spectrum for compact sets.   However, using a trick inspired by \cite[Lemma 1]{modfourier} we can upgrade this to  continuity of the Fourier spectrum for \emph{all} sets.

\begin{thm} \label{ctyX}
If   $ X\subseteq \rd $ is a non-empty   set, then
\[
\fs X \leq \fd X +  d\left(1+\frac{\fd X}{2} \right)\theta 
\]
for all $\theta \in [0,1]$. In particular,   $\fs X$ is Lipschitz continuous on the whole range $[0,1]$ with $\fd^0 X = \fd X$ and,  if $X$ is Borel, $\fd^1 X = \hd X$.  
\end{thm}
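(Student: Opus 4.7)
The plan is to reduce to the compactly supported case, where Theorem~\ref{cty0} applies with $\alpha = 1$, and then invoke it directly. Given any finite Borel measure $\mu$ with $\textup{spt}(\mu) \subseteq X$, I regularise by multiplication against a smooth cutoff: choose $\phi \in C_c^\infty(\rd)$ with $\phi \geq 0$, $\phi \equiv 1$ on the unit ball, and $\textup{spt}(\phi) \subseteq B(0,2)$, then set $\phi_R(x) = \phi(x/R)$ and $\nu_R = \phi_R \mu$. The resulting $\nu_R$ is compactly supported, satisfies $\textup{spt}(\nu_R) \subseteq X$, and has $\widehat{\nu_R} = \widehat \mu \ast \widehat{\phi_R}$.

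The main step is to show that $\fs \nu_R \geq \fs \mu$ for every $\theta \in (0,1]$ and every $R \geq 1$. For $0 < s < \fs \mu$, set $p = 2/\theta$ and $w(z) = |z|^{s/\theta - d}$, so the hypothesis reads $\|\widehat\mu\|_{L^p(w\,dz)}^p = \mathcal{J}_{s,\theta}(\mu)^{1/\theta} < \infty$. Minkowski's integral inequality applied to the convolution gives
\[
\|\widehat{\nu_R}\|_{L^p(w\,dz)} \leq \int |\widehat{\phi_R}(y)|\, \|\widehat\mu(\cdot - y)\|_{L^p(w\,dz)}\,dy.
\]
Splitting the inner translated norm into the regions $\{|u| \geq 2|y|\}$, where $w(u+y) \asymp w(u)$, and $\{|u| < 2|y|\}$, where one uses $|\widehat\mu| \leq \mu(\rd)$ together with $\int_{B(0,3|y|)} |v|^{s/\theta - d}\,dv \lesssim |y|^{s/\theta}$ (valid since $s > 0$), yields a bound of the form $C_1 + C_2|y|^{s/2}$ for $\|\widehat\mu(\cdot - y)\|_{L^p(w\,dz)}$. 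The rescaling $\widehat{\phi_R}(y) = R^d\widehat\phi(Ry)$ together with the Schwartz decay of $\widehat\phi$ then keeps the outer integral uniformly bounded in $R \geq 1$, giving $\mathcal{J}_{s,\theta}(\nu_R) < \infty$ and hence $\fs \nu_R \geq s$. Letting $s \nearrow \fs \mu$ proves the claim.

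With this preservation in hand, Theorem~\ref{cty0} applied to $\nu_R$ (with $\alpha = 1$ via Lemma~\ref{holder}) yields $\fs \nu_R \leq \fd \nu_R + d(1 + \fd \nu_R/2)\theta$. If $\fd X \geq d$ the claimed inequality is immediate since $\min\{\fs\mu,d\} \leq d$; otherwise $\textup{spt}(\nu_R) \subseteq X$ forces $\fd \nu_R \leq \fd X < d$, and monotonicity of $t \mapsto t + d(1 + t/2)\theta$ combined with the previous step gives $\fs \mu \leq \fs \nu_R \leq \fd X + d(1 + \fd X/2)\theta$. Taking the supremum over $\mu$ yields the main inequality. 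The endpoint identities $\fd^0 X = \fd X$ and $\fd^1 X = \hd X$ (for Borel $X$) follow from the definitions together with $\mathcal{J}_{s,1}(\mu) \approx \mathcal{I}_s(\mu)$ and Frostman's lemma. For Lipschitz continuity on $[0,1]$, the preservation step lets one take the supremum over compactly supported $\nu$ only, and for each such $\nu$ the map $\theta \mapsto \min\{\fs \nu, d\}$ is concave, non-decreasing, and Lipschitz at $\theta = 0$ by Theorem~\ref{cty0} ($\alpha = 1$) with a constant controlled uniformly by $d$ and $\fd X$; concavity upgrades this to Lipschitz on all of $[0,1]$, and a supremum of uniformly Lipschitz functions is Lipschitz. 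The main obstacle is the convolution estimate above: the weight $|z|^{s/\theta - d}$ interacts delicately with translations, and the scaling in $R$ must be tracked carefully to produce a bound uniform in $R$.
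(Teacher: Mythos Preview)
Your proof is correct and follows essentially the same strategy as the paper: regularise $\mu$ by multiplication with a smooth compactly supported cutoff, show that the Fourier spectrum does not decrease under this operation via the convolution identity $\widehat{\nu} = \widehat{\mu} * \widehat{f}$ and a splitting into near and far regions, and then invoke Theorem~\ref{cty0} with $\alpha = 1$. The only differences are cosmetic---you use Minkowski's integral inequality where the paper uses Jensen's, you carry an unnecessary scale parameter $R$, and you spell out the uniform Lipschitz argument (supremum of uniformly Lipschitz concave functions) that the paper leaves implicit.
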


\begin{proof}
Let $\mu$ be a finite Borel measure supported by $X$.  Let $f: \rd \to [0,\infty)$ be a smooth function with compact support such that the  Borel  measure $\nu$ defined by $d \nu = f \, d \mu$ satisfies $\nu(X)>0$.  Then $\nu$ is  supported on a compact subset of $X$.  We claim that
\[
\fs \nu \geq \fs \mu
\]
for all $\theta \in [0,1]$. Together with Theorem \ref{cty0} and Lemma \ref{holder}, this  claim proves the result.  Since $f$ is smooth and has compact support, it holds that for all integers $n \geq 1$ $|\widehat f (t)| \lesssim_n |t|^{-n}$ for $|t| \geq 1$. In particular, $\widehat f$ and $f$ are  $L^1$ functions. Therefore, by the Fourier inversion formula,
\begin{equation} \label{inversionuse}
\widehat \nu (z) = \int_{\rd} \widehat \mu(z-t) \widehat f(t) \, dt.
\end{equation}
The claim for $\theta=0$ is \cite[Lemma 1]{modfourier}  and so we may assume $\theta \in (0,1]$.  Using \eqref{inversionuse}
\begin{align*}
\mathcal{J}_{s,\theta}(\nu)^{1/\theta} &= \int_{\rd} \left\lvert \int_{\rd} \widehat \mu(z-t) \widehat f(t) \, dt \right\rvert^{2/\theta} |z|^{s/\theta-d} \, dz\\
& \lesssim  \int_{\rd}\int_{\rd}  \left\lvert \widehat \mu(z-t)\right\rvert^{2/\theta} |\widehat f(t) | \, dt  \, |z|^{s/\theta-d} \, dz \qquad \text{(by Jensen's inequality)}\\
& =  \int_{\rd}|\widehat f(t) | \int_{\rd}  \left\lvert \widehat \mu(z-t)\right\rvert^{2/\theta}|z|^{s/\theta-d} \, dz \, dt   \qquad \text{(by Fubini's  theorem)}\\
& =  \int_{\rd}|\widehat f(t) | \int_{\rd}  \left\lvert \widehat \mu(z)\right\rvert^{2/\theta}|z+t|^{s/\theta-d} \, dz \, dt    \\
& =  \int_{\rd}|\widehat f(t) | \int_{\rd \setminus B(0,5 |t|)}  \left\lvert \widehat \mu(z)\right\rvert^{2/\theta}|z+t|^{s/\theta-d} \, dz \, dt  \\
&\, \hspace{4cm} +    \int_{\rd}|\widehat f(t) | \int_{B(0,5 |t|)}  \left\lvert \widehat \mu(z)\right\rvert^{2/\theta}|z+t|^{s/\theta-d} \, dz \, dt    \\   
&\lesssim  \int_{\rd}|\widehat f(t) | \, dt  \int_{\rd}  \left\lvert \widehat \mu(z)\right\rvert^{2/\theta}|z|^{s/\theta-d} \, dz  +  \int_{\rd}|\widehat f(t) | |t|^d |t|^{s/\theta-d} \, dt     \\
&\lesssim \mathcal{J}_{s,\theta}(\mu)^{1/\theta} + 1
\end{align*}
using the rapid decay of $|\widehat f(t)|$.  This proves the claim and the theorem.  
\end{proof}

It is useful to keep the following simple bounds in mind. These are immediate from Theorems \ref{concave} and \ref{cty0}.

\begin{cor} \label{bounds}
Let $\mu$ be a compactly supported  finite Borel measure on $\mathbb{R}^d$.  Then
\[
 \fd \mu + \theta\left(\sd \mu - \fd \mu\right)  \leq \fs \mu \leq  \min \left\{ \sd \mu, \, \fd \mu +  d\left(1+\frac{\fd \mu}{2} \right)\theta \right\}.
\]
\end{cor}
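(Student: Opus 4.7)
The strategy is to unpack the two stated ingredients (Theorems \ref{concave} and \ref{cty0}) and apply them to the three inequalities hidden inside the corollary.

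First I would handle the lower bound. Theorem \ref{concave} says that $\theta \mapsto \fs \mu$ is concave on $[0,1]$, with $\dim^{0}_{\mathrm{F}}\mu = \fd\mu$ and $\dim^{1}_{\mathrm{F}}\mu = \sd\mu$ by the identifications discussed just after the definition of $\mathcal{J}_{s,\theta}$. A concave function on $[0,1]$ lies above the chord joining its endpoint values, i.e.\
\[
\fs\mu \geq (1-\theta)\,\fd\mu + \theta\,\sd\mu = \fd\mu + \theta\bigl(\sd\mu - \fd\mu\bigr),
\]
which is exactly the claimed lower bound.

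For the upper bound I would split into the two terms inside the minimum. The bound $\fs\mu \leq \sd\mu$ is monotonicity: Theorem \ref{concave} gives that $\fs\mu$ is non-decreasing in $\theta$, so it is at most its value at $\theta=1$, which is $\sd\mu$. For the second term I would invoke Theorem \ref{cty0}, which requires $\widehat\mu$ to be $\alpha$-Hölder for some $\alpha\in(0,1]$. Because $\mu$ is compactly supported, the standard estimate $|\widehat\mu(x)-\widehat\mu(y)|\leq 2\pi |x-y|\int |z|\,d\mu(z)$ (cited in the paper as \cite[(3.19)]{mattila}, and in any case a direct consequence of Lemma \ref{holder} since $|z|\in L^1(\mu)$ on compact support) shows that $\widehat\mu$ is Lipschitz, so we may take $\alpha=1$. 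Theorem \ref{cty0} then yields
\[
\fs\mu \leq \fd\mu + d\!\left(1 + \frac{\fd\mu}{2}\right)\!\theta,
\]
completing the upper bound.

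Combining the two upper bounds by taking their minimum and pairing with the lower bound from concavity gives the stated double inequality. There is no genuine obstacle here; the only point requiring any care is verifying that the Hölder hypothesis of Theorem \ref{cty0} is satisfied under the corollary's compact support assumption, which is immediate from Lemma \ref{holder} applied with $\alpha=1$.
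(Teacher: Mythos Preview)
Your proof is correct and follows exactly the approach the paper intends: the lower bound comes from concavity (chord inequality) in Theorem \ref{concave}, the bound $\fs\mu \leq \sd\mu$ from monotonicity in Theorem \ref{concave}, and the remaining upper bound from Theorem \ref{cty0} with $\alpha=1$, which is available since compact support makes $\widehat\mu$ Lipschitz via Lemma \ref{holder}. The paper simply states the result is ``immediate'' from these theorems, and your unpacking is precisely what is meant.
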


In certain extremal situations the Fourier spectrum is determined by the Fourier and Sobolev dimensions.

\begin{cor} \label{collapse}
If  $\mu$ is a finite Borel measure on $\rd$  such that $\widehat \mu$ is $\alpha$-H\"older and 
\[
\sd \mu = \left(1+\frac{d}{2\alpha} \right) \fd \mu + d,
\]
then
\[
\fs \mu  =  \fd \mu +  d\left(1+\frac{\fd \mu}{2\alpha} \right)\theta 
\]
for all $\theta \in [0,1]$.
\end{cor}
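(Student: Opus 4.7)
The plan is to sandwich $\fs\mu$ between the upper bound supplied by Theorem \ref{cty0} and a matching linear lower bound coming from concavity (Theorem \ref{concave}), and then observe that the hypothesis on $\sd\mu$ is exactly the condition that makes the two bounds coincide.

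First, since $\widehat{\mu}$ is $\alpha$-Hölder, Theorem \ref{cty0} immediately yields
\[
\fs\mu \;\leq\; \fd\mu + d\left(1+\frac{\fd\mu}{2\alpha}\right)\theta
\]
for all $\theta\in[0,1]$, giving the desired upper bound. For the lower bound, recall from Theorem \ref{concave} that $\theta\mapsto \fs\mu$ is concave on $[0,1]$, and that $\dim_{\mathrm F}^{0}\mu=\fd\mu$ while $\dim_{\mathrm F}^{1}\mu=\sd\mu$. Concavity applied to the endpoints $0$ and $1$ therefore gives
\[
\fs\mu \;\geq\; (1-\theta)\,\fd\mu + \theta\,\sd\mu \;=\; \fd\mu + \theta\bigl(\sd\mu-\fd\mu\bigr).
\]

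Now substitute the hypothesised value of $\sd\mu$: the assumption $\sd\mu=\bigl(1+\tfrac{d}{2\alpha}\bigr)\fd\mu+d$ gives
\[
\sd\mu - \fd\mu \;=\; \frac{d}{2\alpha}\,\fd\mu + d \;=\; d\left(1+\frac{\fd\mu}{2\alpha}\right),
\]
so the lower bound becomes
\[
\fs\mu \;\geq\; \fd\mu + d\left(1+\frac{\fd\mu}{2\alpha}\right)\theta,
\]
which matches the upper bound exactly. Combining the two yields the claimed equality for every $\theta\in[0,1]$.

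There is no real obstacle here: the corollary is essentially the observation that a concave function pinned between two coincident affine functions must equal them. The only content is recognising that Theorem \ref{cty0}'s slope $d(1+\fd\mu/(2\alpha))$ is precisely the chord slope from $(0,\fd\mu)$ to $(1,\sd\mu)$ under the given identity, which forces the extremal (linear) profile.
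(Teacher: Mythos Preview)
Your proof is correct and follows exactly the approach the paper intends: the corollary is stated without proof because it is immediate from sandwiching $\fs\mu$ between the upper bound of Theorem~\ref{cty0} and the concavity lower bound of Theorem~\ref{concave} (cf.\ Corollary~\ref{bounds}), which coincide precisely under the hypothesis on $\sd\mu$.
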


Another simple consequence of Theorem \ref{cty0} is that the Sobolev dimension can be controlled by the Fourier dimension. We are unaware if the following estimates were known previously. 

\begin{cor} \label{sobolev}
Let $\mu$ be a finite Borel measure on $\rd$  such that $\widehat \mu$ is $\alpha$-H\"older.   Then
\[
\sd \mu \leq \left(1+\frac{d}{2\alpha} \right) \fd \mu + d.
\]
In particular, if 
\[
\fd \mu \leq  \frac{d}{1+d/(2\alpha)}
\]
then $\sd \mu \leq  2d$, and   if $\fd \mu =0$, then  $\sd \mu \leq  d$.  These are  relevant thresholds for Sobolev dimension because if $\sd \mu > 2d$, then $\mu$ is a continuous function and if $\sd \mu >d$, then $\mu \in L^2(\mathbb{R}^d)$, see  \cite[Theorem 5.4]{mattila}.
\end{cor}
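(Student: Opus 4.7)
The proof is essentially a one-line specialisation of Theorem \ref{cty0}. Recall from the introductory discussion of the Fourier spectrum that $\dim_{\mathrm{F}}^{1} \mu = \sd \mu$. Setting $\theta = 1$ in the bound
\[
\fs \mu \leq \fd \mu + d\left(1+\frac{\fd \mu}{2\alpha} \right)\theta
\]
provided by Theorem \ref{cty0} (whose hypothesis of $\alpha$-Hölder continuity of $\widehat \mu$ is precisely what is assumed here) yields
\[
\sd \mu = \dim_{\mathrm{F}}^{1}\mu \leq \fd \mu + d\left(1+\frac{\fd \mu}{2\alpha}\right) = \left(1+\frac{d}{2\alpha}\right)\fd \mu + d,
\]
which is the main inequality.

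For the two concrete thresholds, I would simply substitute. The condition $\fd \mu \leq d/(1+d/(2\alpha))$ is equivalent to $(1+d/(2\alpha))\fd \mu \leq d$, so the main bound gives $\sd \mu \leq d + d = 2d$. The case $\fd \mu = 0$ is even more immediate: the main bound collapses to $\sd \mu \leq d$.

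Since Theorem \ref{cty0} does all the work, there is no real obstacle; the only thing to be careful of is flagging that $\dim_{\mathrm{F}}^{1}\mu$ literally equals $\sd \mu$ by definition, so that the reader sees the bound is not a strict inequality lost in some limiting argument but a direct evaluation at the endpoint $\theta = 1$.
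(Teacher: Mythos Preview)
Your proposal is correct and matches the paper's approach exactly: the corollary is stated as a simple consequence of Theorem \ref{cty0}, and the argument is precisely to evaluate the bound there at $\theta = 1$, using $\dim_{\mathrm{F}}^{1}\mu = \sd \mu$.
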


\section{Dependence on ambient space}

An elementary but striking observation on the Fourier dimension is that it depends on the ambient space in a way that Hausdorff dimension, for example, does not.  Consider integers $1 \leq k<d$ and let $f^{k,d} : \mathbb{R}^k \to \mathbb{R}^d$ be an isometric embedding defined by identifying $\mathbb{R}^k$ with a $k$-dimensional affine subset of $\mathbb{R}^d$.  Then it is well-known and easily seen that
\[
\fd f_\#^{k,d} \mu = 0
\]
for all finite Borel measures $\mu$ on $\mathbb{R}^k$ but, provided $\sd \mu \leq k$,
\[
\sd f_\#^{k,d} \mu = \sd \mu.
\]
Here $f_\#^{k,d} \mu $ is the pushforward of $\mu$ under $f^{k,d}$. The conclusion for Sobolev dimension follows by observing that the standard energy $\mathcal{I}_s(\mu)$ does not depend on the ambient space. Moreover,
\[
\fd f^{k,d}(X) = 0
\]
and
\[
\hd f^{k,d}(X) = \hd X
\]
for all $X \subseteq \mathbb{R}^k$. Since the Fourier spectrum interpolates between the Fourier and Sobolev/Hausdorff dimensions, it is natural to ask what happens to the Fourier spectrum under the  embeddings $f^{k,d}$. In particular, the answer must encapsulate both of the rather distinct behaviours seen above.

\begin{thm} \label{embedding}
Let $1 \leq k<d$ be integers,  $\mu$ be a  finite Borel measure on $\mathbb{R}^k$ and $X \subseteq \mathbb{R}^k$ be a non-empty set.  Then
\[
\fs  f_\#^{k,d} \mu = \min\{\theta k, \fs \mu\}
\]
and
\[
\fs  f ^{k,d} (X) = \min\{\theta k, \fs X\}
\]
for all $\theta \in [0,1]$.
\end{thm}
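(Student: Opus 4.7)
The plan is to reduce to the canonical embedding $f^{k,d}(x) = (x, \mathbf{0}) \in \mathbb{R}^k \times \mathbb{R}^{d-k}$, which is harmless since any isometry of $\mathbb{R}^d$ only multiplies the Fourier transform by a unimodular factor and composes it with an orthogonal transformation, both of which preserve the energies $\mathcal{J}_{s,\theta}$. Writing $z = (z_1, z_2) \in \mathbb{R}^k \times \mathbb{R}^{d-k}$, one then has $\widehat{f_\#^{k,d}\mu}(z) = \widehat\mu(z_1)$, and the entire behaviour of $\mathcal{J}_{s,\theta}(f_\#^{k,d}\mu)$ is governed by how the weight $|z|^{s/\theta - d}$ integrates over the ``transverse'' variable $z_2$.

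The analytic heart of the proof is an elementary Tonelli-plus-scaling computation: for $\theta \in (0,1]$ and $s > 0$, the substitution $z_2 = |z_1| w$ gives
\[
\int_{\mathbb{R}^{d-k}} (|z_1|^2 + |z_2|^2)^{(s/\theta - d)/2}\, dz_2 = |z_1|^{s/\theta - k} \int_{\mathbb{R}^{d-k}} (1 + |w|^2)^{(s/\theta - d)/2}\, dw,
\]
and an elementary tail estimate in $w$ shows that the outer integral is finite precisely when $s < \theta k$. Consequently, for $s \in (0, \theta k)$ I would obtain $\mathcal{J}_{s,\theta}(f_\#^{k,d}\mu)^{1/\theta} \approx \mathcal{J}_{s,\theta}(\mu)^{1/\theta}$, which delivers the bound $\fs f_\#^{k,d}\mu \geq \min\{\theta k, \fs \mu\}$. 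For the complementary inequality $\fs f_\#^{k,d}\mu \leq \theta k$, I would use continuity of $\widehat\mu$ together with $\widehat\mu(\mathbf{0}) = \mu(\mathbb{R}^k) > 0$ to fix a small ball $B \subseteq \mathbb{R}^k$ around $0$ on which $|\widehat\mu(z_1)| \gtrsim 1$; then for $s > \theta k$ the restriction of the energy integral to $B \times \{|z_2| \geq 1\}$ is already infinite, by the same divergence threshold. The endpoint $\theta = 0$ follows from the well-known vanishing of $\fd f_\#^{k,d}\mu$ recalled in the paragraph just before the theorem statement.

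For the set identity, the bijection $\mu \leftrightarrow f_\#^{k,d}\mu$ between finite Borel measures on $X$ and those on $f^{k,d}(X)$ allows the measure identity to be passed to suprema. The small piece of book-keeping is that the caps $\min\{\cdot, k\}$ in $\fs X$ (with $X \subseteq \mathbb{R}^k$) and $\min\{\cdot, d\}$ in $\fs f^{k,d}(X)$ are both inert once we intersect with $\theta k$: since $\min\{\theta k, \fs X\} \leq k \leq d$, neither cap can bind, and $\min\{\theta k, \cdot\}$ commutes with $\sup$ of a non-negative quantity.

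I expect the main obstacle to be purely organisational: verifying that the energy comparison is clean both above and below the threshold $s = \theta k$, and navigating the two cap conventions in the set definitions, together with the minor check that the isometry reduction to the canonical embedding really preserves everything in sight. The underlying analytical content is a single explicit integral whose convergence threshold is precisely the value $\theta k$ appearing in the theorem, which neatly encodes both extremes (total collapse to $0$ at the Fourier end $\theta = 0$ and preservation of $\sd \mu$ up to the cap $k$ at the Sobolev end $\theta = 1$).
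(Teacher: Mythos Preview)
Your approach is correct and in fact more streamlined than the paper's. The paper treats the three needed inequalities separately: it uses spherical coordinates in $\mathbb{R}^d$ to obtain $\fs f_\#^{k,d}\mu \leq \theta k$, a dyadic-shell decomposition in the $V^\perp$ variable to obtain $\fs f_\#^{k,d}\mu \leq \fs\mu$, and an $\varepsilon$-perturbation of the exponent (splitting $|z|^{s/\theta-d}=|z|^{(s+\varepsilon\theta)/\theta-k}\,|z|^{k-d-\varepsilon}$ and bounding each factor by the corresponding coordinate) to separate variables in the lower bound. Your single Tonelli-plus-scaling identity collapses all three: for $0<s<\theta k$ it actually gives the exact equality $\mathcal{J}_{s,\theta}(f_\#^{k,d}\mu)^{1/\theta}=C_{s,\theta}\,\mathcal{J}_{s,\theta}(\mu)^{1/\theta}$ with $0<C_{s,\theta}<\infty$, which immediately delivers both the lower bound \emph{and} (via the $\gtrsim$ direction of your $\approx$, which you use but do not spell out) the missing upper bound $\fs f_\#^{k,d}\mu\leq\fs\mu$ in the regime $\fs\mu<\theta k$. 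The remaining bound $\leq\theta k$ you recover by the same continuity-near-zero idea as the paper, only in Cartesian rather than spherical coordinates. Your handling of the set case (bijection of supported measures, the two caps $k$ and $d$ being inert below $\theta k$, and $\min\{\theta k,\cdot\}$ commuting with the supremum) and of the endpoint $\theta=0$ is correct and matches the paper, which simply asserts that the set claim is immediate from the measure claim. What the paper's case-by-case argument buys is perhaps greater robustness to non-flat embeddings; what your argument buys is that the exact product structure of the canonical embedding reduces everything to a single explicit integral whose convergence threshold is visibly $s=\theta k$.
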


\begin{proof}
The claim for sets follows immediately from the claim for measures and so we just prove the result for $\mu$.  Fix $\theta \in (0,1)$, write $V = f^{k,d}(\mathbb{R}^k)$ and  $\pi$ for orthogonal projection from $\mathbb{R}^d$ onto $V$ identified with $\mathbb{R}^k$.  We may assume without loss of generality that $V$ is a subspace of $\rd$.  We begin with the upper bound. Since $\widehat \mu (0) = \mu(\rd)>0$ and $\widehat \mu $ is continuous, there exists $\eps=\eps(\mu)$ such that $|\widehat \mu (z)| \geq \mu(\rd)/2>0$ for all $z \in B_V(0,\eps)$, where $B_V$ denotes   open balls in $V$.  Write $0 \neq z \in \mathbb{R}^d$ in spherical coordinates $z=r v$ where $r>0$ and $v \in S^{d-1}$ with $\sigma_{d-1}$ the surface measure on $S^{d-1}$. Then
\begin{align*}
\mathcal{J}_{s,\theta}( f_\#^{k,d} \mu)^{1/\theta} &=\int_{0}^\infty  \int_{S^{d-1}} | \widehat \mu(r \pi(v))  |^{2/\theta} r^{s/\theta-d} \, r^{d-1} d\sigma_{d-1}(v) dr \\
&  \geq \int_{0}^\infty r^{s/\theta-1} \int_{\substack{v \in S^{d-1}: \\ r \pi(v) \in B_V(0,\eps)}} |\widehat \mu(r \pi(v))  |^{2/\theta}  \, d\sigma_{d-1}(v) dr \\
&  \gtrsim_\theta  \int_{0}^\infty r^{s/\theta-1}\sigma_{d-1}\left(v \in S^{d-1}:   \pi(v) \in B_V(0,\eps/r) \right)  dr \\
&  \gtrsim  \int_{0}^\infty r^{s/\theta-1} \left( \eps/r\right)^k  dr \\
&=\infty
\end{align*}
whenever $s \geq k\theta$, proving $\fs f_\#^{k,d} \mu \leq k \theta$.

  For the remaining upper bound, we may assume $\theta k >\fs \mu$ and   let $\theta k> s >\fs \mu$ and fix  $1< c <2$ and $\max\{c/2,1/c\} < r <1$. In what follows the implicit constants may depend on $r$ and $c$ (and other fixed parameters as usual).  Then,  writing $z=(x,y)$ for $x \in V^\perp$ and $y \in V$,
\begin{align*}
\int_{\mathbb{R}^d} |\widehat{ f_\#^{k,d} \mu}(z)  |^{2/\theta} |z|^{s/\theta-d} \, dz & \gtrsim  \int_{x \in V^\perp } \int_{\substack{y \in V : \\ |x| \leq |y| \leq 2 |x|}} |\widehat \mu(y)  |^{2/\theta} |y|^{s/\theta-k}  |x|^{k-d}  \, dy dx \\
& \geq \sum_{n=1}^\infty  \int_{\substack{x \in V^\perp: \\ r c^n \leq |x| \leq c^n} } \int_{\substack{y \in V : \\ c^n \leq |y| \leq c^{n+1}}} |\widehat \mu(y)  |^{2/\theta} |y|^{s/\theta-k}  |x|^{k-d}  \, dy dx \\
& \gtrsim \sum_{n=1}^\infty  c^{n(k-d)}\int_{\substack{x \in V^\perp: \\ r c^n \leq |x| \leq c^n} }  \,   dx \int_{\substack{y \in V : \\ c^n \leq |y| \leq c^{n+1}}} |\widehat \mu(y)  |^{2/\theta} |y|^{s/\theta-k}    \, dy  \\
& \gtrsim \sum_{n=1}^\infty  \int_{\substack{y \in V : \\ c^n \leq |y| \leq c^{n+1}}} |\widehat \mu(y)  |^{2/\theta} |y|^{s/\theta-k}    \, dy  \\
& =  \int_{\substack{y \in V : \\ c \leq |y| }} |\widehat \mu(y)  |^{2/\theta} |y|^{s/\theta-k}    \, dy  \\
&=\infty
\end{align*}
proving $\fs f_\#^{k,d} \mu \leq \fs \mu$.

We turn our attention to the lower bound.  Let $s <\min\{\theta k , \fs \mu\}$ and $\eps \in (0,1)$ with $s+\eps\theta <\min\{\theta k , \fs \mu\}$.  Then, again writing $z=(x,y)$ for $x \in V^\perp$ and $y \in V$,
\begin{align*}
\int_{\mathbb{R}^d} |\widehat{ f_\#^{k,d} \mu}(z)  |^{2/\theta} |z|^{s/\theta-d} \, dz & =  \int_{\mathbb{R}^d} |\widehat \mu (\pi(z))  |^{2/\theta} |z|^{(s+\eps \theta)/\theta-k} |z|^{k-d-\eps} \, dz \\
& \leq   \int_{x \in V^\perp } \int_{y \in V } |\widehat \mu (y)  |^{2/\theta}  |y|^{(s+\eps \theta)/\theta-k} |x|^{k-d-\eps}  \, dy \, dx \\
& \leq   \int_{y \in V } |\widehat \mu (y)  |^{2/\theta}  |y|^{(s+\eps \theta)/\theta-k}  \, dy   \int_{x \in V^\perp }|x|^{k-d-\eps}  \, dx \\
&<\infty
\end{align*}
proving $\fs  f_\#^{k,d} \mu \geq  \min\{\theta k, \fs \mu\}$, as required.
\end{proof}

One can see from Theorem \ref{embedding} together with Theorem \ref{cty0} that the Fourier spectrum of a measure is preserved upon embedding in a Euclidean space with strictly larger dimension if and only if the Fourier dimension of the measure is zero to begin with.

Using Theorem \ref{embedding} we get our first  examples where the Fourier spectrum can be derived explicitly.

\begin{cor} \label{hyperplane}
Let $X$ be an isometric embedding of  $[0,1]^k$ in  $\mathbb{R}^d$ for integers $1 \leq k < d$ and let  $\mu$ be the restriction of $k$-dimensional Hausdorff measure to $X$.  Then
\[
\fs X = \fs \mu = k \theta
\]
for all $\theta \in [0,1]$. 
\end{cor}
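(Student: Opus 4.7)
The plan is to deduce the corollary directly from Theorem \ref{embedding} together with a short Plancherel/concavity argument applied to Lebesgue measure on the unit cube. Since $f^{k,d}$ is an isometric embedding, $k$-dimensional Hausdorff measure is transported to $k$-dimensional Hausdorff measure, so $\mu = f_\#^{k,d}\nu$, where $\nu$ denotes Lebesgue measure on $[0,1]^k \subset \mathbb{R}^k$. Theorem \ref{embedding} then gives
\[
\fs \mu = \min\{k\theta,\fs \nu\},
\]
so the main task is to show $\fs \nu \ge k$ for some non-trivial $\theta$ (in fact it suffices to show $\fs \nu \ge k\theta$ for all $\theta \in [0,1]$).

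To estimate $\fs \nu$ I would examine the two endpoints and invoke concavity. At $\theta = 1$, we have $\dim^1_{\mathrm{F}}\nu = \sd \nu$. Since $\nu$ has a bounded density supported on a set of finite measure, $\nu \in L^2(\mathbb{R}^k)$, and Plancherel yields $\widehat\nu \in L^2(\mathbb{R}^k)$. Splitting the Sobolev integral at $|z| = 1$, the piece over $|z| \le 1$ is finite for any $s > 0$ because $\widehat\nu$ is bounded, and the piece over $|z| > 1$ is controlled by $\|\widehat\nu\|_2^2$ for any $s \le k$. Hence $\sd \nu \ge k$. Trivially $\dim^0_{\mathrm{F}}\nu = \fd \nu \ge 0$. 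Concavity of $\theta \mapsto \fs \nu$ on $[0,1]$ from Theorem \ref{concave} then yields
\[
\fs \nu \ge (1-\theta)\fd \nu + \theta\,\sd \nu \ge k\theta
\]
for every $\theta \in [0,1]$. Combined with Theorem \ref{embedding}, this produces $\fs \mu = \min\{k\theta,\fs \nu\} = k\theta$.

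It remains to transfer the measure-level statement to the set. The upper bound $\fs X \le k\theta$ is immediate: any finite Borel measure supported on $X$ is the pushforward under $f^{k,d}$ of a measure on $[0,1]^k \subset \mathbb{R}^k$, so Theorem \ref{embedding} caps its Fourier spectrum at $k\theta$. For the matching lower bound I would use $\mu$ itself as a test measure; since $k\theta \le k < d$, one has $\min\{\fs \mu, d\} = k\theta$, giving $\fs X \ge k\theta$.

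There is no substantial obstacle: Theorem \ref{embedding} absorbs the geometric content, and the required estimate on $\fs \nu$ reduces to the single observation that Lebesgue measure on a cube is square-integrable, combined with the general concavity of the Fourier spectrum. The only point requiring modest care is the $\sd \nu \ge k$ bound, but this follows in a couple of lines from Plancherel.
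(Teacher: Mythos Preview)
Your proposal is correct and takes essentially the same approach as the paper, which simply declares the corollary an immediate consequence of Theorem \ref{embedding} without further detail. You have filled in the missing steps cleanly: the Plancherel argument giving $\sd \nu \ge k$ for Lebesgue measure on $[0,1]^k$, combined with concavity, is exactly what is needed to verify $\fs \nu \ge k\theta$, and your handling of both the measure and set statements via Theorem \ref{embedding} is accurate.
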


\section{Fourier coefficients, energy, and Riesz products}

There is a convenient representation for the energy in terms of the Fourier coefficients of a measure, see \cite[Theorem 3.21]{mattila}.  Indeed, for $\mu$ a  finite Borel measure on $\rd$ with support contained in $[0,1]^d$
\begin{equation} \label{fouriercoeff}
\mathcal{J}_{s,1}(\mu) \approx 1 +  \sum_{z\in \mathbb{Z}^d \setminus\{0\}} |\widehat{\mu}(z)|^2 |z|^{s-d}  
\end{equation}
for $0<s<d$. 
Using the convolution formula, \eqref{fouriercoeff}  gives information about $\mathcal{J}_{s,\theta}(\mu)$ for $0<s<d \theta$ with $\theta$ the reciprocal of an integer. However, we need to sum over the  finer grid $\theta\mathbb{Z}^d$ because the convolution $\mu^{*1/\theta}$ is no-longer supported on $[0,1]^d$ (but on $[0,1/\theta]^d$) and so we need to rescale in order to apply \eqref{fouriercoeff}.

\begin{prop} \label{coeffs}
Let $\mu$ be a  finite Borel measure on $\rd$ with support contained in $[0,1]^d$. For $0<s<d \theta$ and $\theta =1/k$ for $k \in \mathbb{N}$ 
\[
\mathcal{J}_{s,\theta}(\mu)   \approx   \left(1 +  \sum_{z\in \theta\mathbb{Z}^d \setminus\{0\}} |\widehat{\mu}(z)|^{2/\theta} |z|^{s/\theta-d}\right)^\theta .
\]
\end{prop}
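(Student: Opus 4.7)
The plan is to reduce the statement to the known identity \eqref{fouriercoeff} by rewriting the integral defining $\mathcal{J}_{s,\theta}(\mu)^{1/\theta}$ as a Sobolev-type energy of a rescaled convolution power $\mu^{*k}$, where $k = 1/\theta$. The key identity is that $\widehat{\mu^{*k}}(z) = \widehat \mu(z)^k$, and therefore $|\widehat \mu(z)|^{2/\theta} = |\widehat{\mu^{*k}}(z)|^2$.

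First I would note that with $k = 1/\theta$ and $t = s/\theta \in (0,d)$ (which matches exactly the hypothesis $0 < s < d\theta$), we may write
\[
\mathcal{J}_{s,\theta}(\mu)^{1/\theta} = \int_{\mathbb{R}^d} |\widehat{\mu^{*k}}(z)|^{2} |z|^{t-d}\, dz.
\]
Because $\mu$ is supported on $[0,1]^d$, the convolution $\mu^{*k}$ is supported on $[0,k]^d$, so it is not directly eligible for \eqref{fouriercoeff}. The natural remedy is to let $T(x) = x/k$ and set $\nu = T_\# \mu^{*k}$, which is a finite Borel measure supported on $[0,1]^d$ with $\widehat \nu(z) = \widehat \mu(z/k)^k$.

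Next I would perform the linear change of variables $w = z/k$ to match the two sides. On the one hand,
\[
\mathcal{J}_{t,1}(\nu) = \int_{\mathbb{R}^d} |\widehat \mu(z/k)|^{2k} |z|^{t-d}\, dz = k^{t} \int_{\mathbb{R}^d} |\widehat \mu(w)|^{2/\theta} |w|^{s/\theta - d}\, dw \approx \mathcal{J}_{s,\theta}(\mu)^{1/\theta},
\]
with the multiplicative constant $k^{t}$ depending only on the fixed parameters $k$, $\theta$, $s$, $d$. On the other hand, applying \eqref{fouriercoeff} to $\nu$ with exponent $t \in (0,d)$ and then performing the same substitution $w = z/k$ gives
\[
\mathcal{J}_{t,1}(\nu) \approx 1 + \sum_{z \in \mathbb{Z}^d \setminus \{0\}} |\widehat \mu(z/k)|^{2k} |z|^{t-d} \approx 1 + \sum_{w \in \theta \mathbb{Z}^d \setminus \{0\}} |\widehat \mu(w)|^{2/\theta} |w|^{s/\theta - d},
\]
again absorbing a factor of $k^{t-d}$ into the implicit constant. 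Combining these two expressions yields the claimed relation for $\mathcal{J}_{s,\theta}(\mu)^{1/\theta}$, and raising to the power $\theta$ (which preserves $\approx$ since both sides are positive and $\theta$ is fixed) gives the statement.

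The routine work is purely bookkeeping with constants and the variable changes; the only substantive point is the observation that the role played by $[0,1]^d$ in \eqref{fouriercoeff} is played here by $[0,1/\theta]^d$ for $\mu^{*k}$, which is why the sum must be taken over the refined lattice $\theta \mathbb{Z}^d$ rather than $\mathbb{Z}^d$. I do not anticipate a serious obstacle, but some care is needed to verify that the translation $s \leftrightarrow t = s/\theta$ sends the admissible range $(0, d\theta)$ in the hypothesis exactly onto the admissible range $(0,d)$ required to invoke \eqref{fouriercoeff}.
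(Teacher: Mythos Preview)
Your proposal is correct and follows essentially the same approach as the paper: rescale the $k$-fold convolution $\mu^{*k}$ by $x \mapsto \theta x$ so that it is supported on $[0,1]^d$, apply \eqref{fouriercoeff} with exponent $t = s/\theta \in (0,d)$, and undo the substitution to obtain the sum over $\theta\mathbb{Z}^d$. Your measure $\nu = T_\#\mu^{*k}$ is exactly what the paper denotes $\theta\mu^{*1/\theta}$, and the bookkeeping with the constants $k^{t}$ and $k^{t-d}$ matches the paper's chain of $\approx_\theta$ steps.
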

\begin{proof}
 Writing  $\theta \mu^{*1/\theta}$ for the pushforward of  $\mu^{*1/\theta}$ under the dilation $x \mapsto \theta x$, we get that $\theta \mu^{*1/\theta}$  is  supported on $[0,1]^d$ and that
\begin{equation} \label{temp}
\widehat{\theta \mu^{*1/\theta}}(z) = \widehat{ \mu^{*1/\theta}}(\theta z)
\end{equation}
for $z \in \mathbb{R}^d$.  Then
\begin{align*}
\mathcal{J}_{s,\theta}(\mu)^{1/\theta} & = \int_{\rd} |\widehat \mu (z)|^{2/\theta} |z|^{s/\theta - d} \, dz \\
& = \int_{\rd} |\widehat{ \mu^{*1/\theta}} (z)|^{2} |z|^{s/\theta - d} \, dz \qquad \text{(convolution formula)}\\
& \approx_\theta  \int_{\rd} | \widehat{\theta \mu^{*1/\theta}}(z/\theta)|^{2} |z/\theta|^{s/\theta - d} \, dz \qquad \text{(by \eqref{temp})} \\
& \approx_\theta   \int_{\rd} | \widehat{\theta \mu^{*1/\theta}}(z)|^{2} |z|^{s/\theta - d} \, dz\\
& = \mathcal{J}_{s/\theta,1}(\theta \mu^{*1/\theta}) \\
 &\approx 1 +  \sum_{z\in \mathbb{Z}^d \setminus\{0\}} |\widehat{\theta \mu^{*1/\theta}}(z)|^2 |z|^{s/\theta-d}  \qquad \text{(by \eqref{fouriercoeff})}  \\
 &\approx_\theta 1 +  \sum_{z\in \mathbb{Z}^d \setminus\{0\}} |\widehat{ \mu^{*1/\theta}}(\theta z)|^2 |\theta z|^{s/\theta-d}  \qquad \text{(by \eqref{temp})}  \\
& = 1 +  \sum_{z\in \theta\mathbb{Z}^d \setminus\{0\}} |\widehat{\mu}(z)|^{2/\theta} |z|^{s/\theta-d}  \qquad \text{(convolution formula)}
\end{align*}
as required.
\end{proof}

It would be useful to relax the assumptions in Proposition \ref{coeffs} but we do not attempt this here.
\begin{ques}
In  Proposition \ref{coeffs}, can the assumption that $\theta$ is the reciprocal of an integer be removed?  Can the assumption that $0<s<d\theta$ be weakened?
\end{ques}

Being able to estimate the energy in terms of the Fourier coefficients often simplifies calculations.  To demonstrate this we give an easy example of a measure showing the upper bound from Theorem \ref{cty0} is sharp in the case $\fd \mu = 0$ and $\alpha = 1$.

\begin{cor} \label{firstsharp}
Define $f:[0,1] \to \mathbb{R}$   by
\[
f(x) = 2+ \sum_{n=1}^\infty n^{-2} \sin(2\pi2^{n}x)
\]
and $f_d: [0,1]^d \to \mathbb{R}$ by $f_d(x_1, \dots, x_d) = f(x_1)  \cdots  f(x_d)$.  
Then $f$ and $f_d$ are non-negative and we may define a measure $\mu$ supported on $[0,1]^d$ by  $d\mu = f_d dx$. Then
\[
\fs \mu = \theta d
\]
for all $\theta \in [0,1]$.
\end{cor}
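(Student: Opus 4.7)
The upper bound comes straight from Theorem \ref{cty0}. A direct Fourier computation on $[0,1]$ yields $\widehat{f}(\pm 2^n) = \pm 1/(2in^2)$ with $\widehat{f}(m) = 0$ at all other nonzero integers $m$, so along the lacunary sequence $z_n = (2^n, 0, \ldots, 0)$ we have $|\widehat{\mu}(z_n)| = 2^{d-1}/n^2$ and hence $\fd \mu = 0$.  Compact support of $\mu$ together with Lemma \ref{holder} gives that $\widehat{\mu}$ is Lipschitz (i.e.\ $\alpha=1$), and substituting $\fd\mu = 0$ and $\alpha = 1$ into Theorem \ref{cty0} produces $\fs \mu \leq d\theta$ on $[0,1]$.

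For the lower bound, I would first observe $|\sum_n n^{-2} \sin(2\pi 2^n x)| \leq \pi^2/6 < 2$, so $f \geq 2 - \pi^2/6 > 0$ and $f_d$ is strictly positive, bounded, and compactly supported; in particular $f_d \in L^p(\mathbb{R}^d)$ for every $p \in [1, \infty]$.  Since $\theta \mapsto \fs \mu$ is concave (Theorem \ref{concave}) and the target function $\theta \mapsto d\theta$ is affine and linearly interpolates its values at the consecutive reciprocals $\theta = 1/(k+1)$ and $\theta = 1/k$, it would suffice to establish $\fd^{1/k} \mu \geq d/k$ for every $k \in \mathbb{N}$.  The analytic ingredient is Hausdorff--Young: taking exponent $p = 2k/(2k-1) \in [1,2]$ gives $\widehat{\mu} \in L^{2k}(\mathbb{R}^d)$, bypassing any need for the more delicate sum representation of Proposition \ref{coeffs}.

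Fix $k$ and $0 < s < d/k$, so that $sk - d < 0$. Splitting,
\[
\mathcal{J}_{s,1/k}(\mu)^{k} = \int_{|z| \leq 1} |\widehat{\mu}(z)|^{2k} |z|^{sk-d}\, dz + \int_{|z| > 1} |\widehat{\mu}(z)|^{2k} |z|^{sk-d}\, dz.
\]
The first piece is finite since $|\widehat{\mu}| \leq \mu(\mathbb{R}^d) = 2^d$ and $|z|^{sk-d}$ is integrable near the origin (as $sk > 0$), and the second is bounded by $\|\widehat{\mu}\|_{L^{2k}}^{2k} < \infty$ because $|z|^{sk-d} \leq 1$ on $|z| > 1$.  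Letting $s \uparrow d/k$ then yields $\fd^{1/k} \mu \geq d/k$, completing the argument.  There is no serious obstacle: the key observation is that a bounded density unlocks global $L^{2k}$ control of $\widehat{\mu}$, after which concavity of $\theta \mapsto \fs \mu$ takes care of the interpolation from reciprocals of integers to the full range $(0,1]$.
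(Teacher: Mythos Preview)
Your proof is correct.  The upper bound is handled exactly as in the paper.  For the lower bound the two arguments diverge: the paper only establishes the single endpoint $\sd\mu \geq d$ by invoking the Fourier series representation \eqref{fouriercoeff} of the energy together with the explicit lacunary structure of the nonzero coefficients of $\widehat f$, and then appeals to concavity once between $\theta=0$ and $\theta=1$ to obtain $\fs\mu \geq (1-\theta)\cdot 0 + \theta\cdot d = d\theta$ for all $\theta$.  Your route via Hausdorff--Young is a genuinely different and softer alternative, replacing the coefficient computation with the observation that a bounded compactly supported density lies in every $L^p$, so $\widehat\mu\in L^{2k}$.  That said, you work harder than necessary: the case $k=1$ is just Plancherel and already gives $\sd\mu\geq d$, after which concavity from the two endpoints suffices; the intermediate reciprocals $1/k$ for $k\geq 2$ and the piecewise interpolation are redundant.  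What your approach buys is robustness---it applies to any measure with bounded density on a cube without knowing its Fourier coefficients---whereas the paper's estimate exploits the specific sparsity of $\widehat f$.
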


\begin{proof}
 For integers $n \geq 1$
 \[
|\widehat f(2^{n})| = n^{-2} 
\]
and $|\widehat f(z)| = 0$ for integers $z \in \mathbb{Z}$ which are not of the form $z=2^n$ for $n \geq 1$ an integer.  Moreover, for all $z = (z_1, \dots, z_d) \in \mathbb{Z}^d$
\[
|\widehat \mu(z)| = |\widehat f_d(z)| = |\widehat f(z_1)|   \cdots  |\widehat f(z_d)| .
\]
Therefore, if $z = (z_1, \dots, z_d) \in \mathbb{Z}^d$ is such that $z_i=2^{n_i}$  for integers $n_i \geq 1$, then
 \[
|\widehat \mu(z)| = n_1^{-2} \cdots n_d^{-2} 
\]
and $|\widehat \mu(z)| = 0$, otherwise. In particular,  $\fd \mu = 0$. Further, using the Fourier series representation \eqref{fouriercoeff} of the energy 
\[
\mathcal{J}_{s,1}(\mu) \approx 1 +  \sum_{z=-\infty}^{\infty} |\widehat{\mu}(z)|^2 |z|^{s-d} \lesssim  1 +  \sum_{n=1}^{\infty} n^{d-1} n^{-4}  2^{n(s-d)} <\infty
\]
for $s<d$ and so $\sd \mu =\dim_{\mathrm{F}}^1 \mu = d$.  Since $\fs \mu$ is concave, the result follows from Theorem \ref{cty0}.
\end{proof}

\subsection{Riesz products}

Proposition \ref{coeffs} gives useful information about the Fourier spectrum for $\theta$ which are the reciprocal of an integer.  However, if more information about the Fourier transform is available sometimes this can be pushed to all $\theta$.  This is the case   for Riesz products which we use below  to provide explicit examples with a more complicated Fourier spectrum than the examples we have met thus far.   Riesz products are a well-studied family of measures defined by 
\[
\mu_{a,\lambda} = \prod_{j=1}^\infty (1+a_j \cos(2\pi \lambda_j x)) \qquad (x \in [0,1])
\]
where  $a=(a_j)$ and $\lambda = (\lambda_j)$ with $a_j \in [-1,1]$ and $\lambda_j \in \mathbb{N}$ with $   \lambda_{j+1} \geq 3\lambda_j$   are given sequences.  Here $\mu_{a,\lambda}$ is the weak limit of the sequence of absolutely continuous measures associated to the truncated  products.   It is well-known that $\mu_{a,\lambda}$ is absolutely continuous if and only if $\sum_j a_j^2 < \infty$ and otherwise they are mutually singular with respect to Lebesgue measure, see \cite[Theorem 13.2]{mattila}.  The dimension theory of Riesz products, especially in the singular case, is well-studied, see \cite{hare, mattila}.  The Fourier coefficients of $\mu_{a,\lambda}$ can be derived easily giving
\begin{equation} \label{ftrp}
\widehat{\mu_{a,\lambda}}(m) = \Pi_{\eps_j \neq 0} (a_j/2)
\end{equation}
for integers $m \neq 0$ with (unique) representation
\[
m = \sum_j \eps_j \lambda_j \qquad (\eps_j \in \{-1,0,1\})
\]
and $\widehat{\mu_{a,\lambda}}(m) = 0$ for integers without such a representation. Therefore, for each $k \in \mathbb{N}$,
\[
\widehat{\mu_{a,\lambda}}(\lambda_k) = a_k/2
\]
and so 
\[
\fd \mu_{a,\lambda} \leq \liminf_{k \to \infty} \frac{-2 \log |a_k|}{\log \lambda_k}.
\]
Therefore, if $\fd \mu_{a,\lambda} >0$, then $\sum_j a_j^2 < \infty $ and $ \mu_{a,\lambda}$ is absolutely continuous. Moreover, if $\lambda_{j+1}/\lambda_{j} \to \infty$, then $\hd \mu_{a,\lambda} = 1$ and $\sd \mu_{a,\lambda} \geq 1$, see \cite[Corollary 3.3]{hare}.  

\begin{thm} \label{riesz1}
Suppose $ \lambda_{j+1} \leq C \lambda_j$  for some fixed $C \geq 3$ and  $\liminf_{k \to \infty} \frac{-2 \log |a_k|}{\log \lambda_k} = 0$. Let
\[
\mathcal{S}_\theta(s) =\sum_{k=1}^\infty \lambda_k^{s/\theta-1}  \, \Pi_{j=1}^{k} (1+|a_j|^{2/\theta} 2^{1-2/\theta})  .
\]
Then, for $\theta \in (0,1]$,
\[
\fs \mu_{a,\lambda} =    \sup\{s \leq  \theta : \mathcal{S}_\theta(s)  < \infty\}.
\]
\end{thm}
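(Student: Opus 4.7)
The plan is to reduce the integral $\mathcal{J}_{s,\theta}(\mu_{a,\lambda})^{1/\theta}$ to a discrete sum over integer Fourier coefficients, where \eqref{ftrp} gives everything in closed form, and then carry out the combinatorial computation. Concretely, I would first establish, for $0 < s < \theta$,
\[
\mathcal{J}_{s,\theta}(\mu_{a,\lambda})^{1/\theta} \approx 1 + \sum_{m \in \mathbb{Z} \setminus \{0\}} |\widehat{\mu_{a,\lambda}}(m)|^{2/\theta} |m|^{s/\theta - 1}.
\]
For $\theta$ the reciprocal of an integer this almost follows from Proposition \ref{coeffs}, except the sum there runs over $\theta\mathbb{Z}$ rather than $\mathbb{Z}$; the residual non-integer contributions can be handled via the expansion $\widehat{\mu_{a,\lambda}}(z) = \sum_\varepsilon c_\varepsilon K(z - \sum_j \varepsilon_j \lambda_j)$ with $c_\varepsilon = \prod_{\varepsilon_j \neq 0}(a_j/2)$ and $|K(w)| \lesssim \min(1,1/|w|)$, the lacunarity $\lambda_{j+1} \geq 3\lambda_j$ isolating the dominant term. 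For general $\theta$, I would instead invoke a Plancherel--P\'olya sampling inequality (valid because $\widehat{\mu_{a,\lambda}}$ is entire of exponential type $2\pi$) on dyadic annuli $\{|z| \asymp \lambda_k\}$ to identify the localised $L^{2/\theta}$ integral with an $\ell^{2/\theta}$ sum at integer samples.

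Given that reduction, the computation is direct. Uniqueness of the representation $m = \sum_j \varepsilon_j \lambda_j$ under lower lacunarity partitions $\mathbb{Z} \setminus \{0\}$ into $B_k = \{m : \varepsilon_k \neq 0\}$, and the upper lacunarity $\lambda_{j+1} \leq C\lambda_j$ gives $|m| \asymp \lambda_k$ uniformly on $B_k$. Summing over $\varepsilon_k \in \{-1,+1\}$ and $\varepsilon_j \in \{-1,0,1\}$ for $j < k$ yields
\[
\sum_{m \in B_k} |\widehat{\mu_{a,\lambda}}(m)|^{2/\theta} = |a_k|^{2/\theta}\, 2^{1-2/\theta} \prod_{j=1}^{k-1}\bigl(1 + |a_j|^{2/\theta} 2^{1-2/\theta}\bigr).
\]
Setting $A_j = 1 + |a_j|^{2/\theta} 2^{1-2/\theta}$ and $T_k = (A_k - 1) \prod_{j<k} A_j$, the telescoping $\prod_{j \leq k} A_j = 1 + \sum_{j \leq k} T_j$ and an interchange of summation give
\[
\mathcal{S}_\theta(s) = \sum_k \lambda_k^{s/\theta - 1} + \sum_j T_j \sum_{k \geq j} \lambda_k^{s/\theta - 1} \approx \sum_k \lambda_k^{s/\theta - 1} + \sum_k T_k\,\lambda_k^{s/\theta - 1},
\]
the last comparison using that $\sum_{k \geq j} \lambda_k^{s/\theta - 1}$ is a geometric series dominated by its leading term when $s < \theta$, by the lower lacunarity.

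The bare sum $\sum_k \lambda_k^{s/\theta - 1}$ is finite iff $s < \theta$; in particular $\mathcal{S}_\theta(\theta) = \infty$, matching the a priori bound $\fs \mu_{a,\lambda} \leq \theta$ coming from Theorem \ref{cty0} (since $\widehat{\mu_{a,\lambda}}(\lambda_k) = a_k/2$ together with the $\liminf$ hypothesis forces $\fd \mu_{a,\lambda} = 0$, and $\widehat{\mu_{a,\lambda}}$ is Lipschitz by compact support). For $s < \theta$, finiteness of $\mathcal{S}_\theta(s)$ is equivalent to finiteness of $\sum_k T_k \lambda_k^{s/\theta-1}$, which via the previous two paragraphs is equivalent to $\mathcal{J}_{s,\theta}(\mu_{a,\lambda}) < \infty$, giving the claimed formula. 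The main obstacle is the first step in the case $\theta \neq 1/k$: Proposition \ref{coeffs} is unavailable, and one must justify a Plancherel--P\'olya-type comparison adapted to the radial weight $|z|^{s/\theta - 1}$ while carefully controlling the off-lattice Fourier mass, which is where the upper lacunarity assumption $\lambda_{j+1} \leq C\lambda_j$ enters to prevent large gaps in the spectrum.
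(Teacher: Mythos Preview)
Your combinatorial reduction from the discrete sum $\sum_m |\widehat{\mu_{a,\lambda}}(m)|^{2/\theta}|m|^{s/\theta-1}$ to $\mathcal{S}_\theta(s)$ is correct and essentially identical to the paper's argument (which follows \cite[Theorem 13.3]{mattila}); the telescoping via $T_k = (A_k-1)\prod_{j<k}A_j$ is exactly what is done there. The divergence is entirely in how you pass from the continuous integral $\mathcal{J}_{s,\theta}$ to that discrete sum.

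The paper avoids both Proposition \ref{coeffs} and any Plancherel--P\'olya machinery. For the lower bound on the integral (upper bound on $\fs$) it simply uses Lipschitz continuity of $\widehat{\mu_{a,\lambda}}$ to obtain $|\widehat{\mu_{a,\lambda}}(z)| \geq |\widehat{\mu_{a,\lambda}}(m)|/2$ for $|z-m|\leq 1/2$ and integrates over unit intervals. For the upper bound on the integral (lower bound on $\fs$) it uses precisely the kernel expansion you wrote down, $|\widehat{\mu_{a,\lambda}}(z)| \lesssim \sum_m |\widehat{\mu_{a,\lambda}}(m)|\min\{1,|z-m|^{-1}\}$, followed by Jensen's inequality (with an $\eps$-regularisation of the kernel to force $\ell^1$ summability) and Fubini, splitting the $z$-integral into four ranges according to $|z-m|$. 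Both steps work uniformly for all $\theta\in(0,1]$, so the case distinction $\theta=1/k$ versus general $\theta$ is unnecessary. In short, the tool you relegated to handling ``residual non-integer contributions'' in a special case is in fact the whole engine for the harder direction in general. Your Plancherel--P\'olya route could in principle be pushed through (the weight $|z|^{s/\theta-1}$ lies in $A_p$ for $0<s<\theta$) and would even give the clean two-sided comparison $\mathcal{J}_{s,\theta}^{1/\theta}\approx 1+\sum_m$ without the $\eps$-loss the paper incurs, but it is heavier than what is needed to compute $\fs$.
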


\begin{proof}
We first prove the upper bound.  Note that $\fd \mu_{a,\lambda} =  \liminf_{k \to \infty} \frac{-2 \log |a_k|}{\log \lambda_k} = 0$ and so $\fs \mu_{a,\lambda} \leq \theta$ by Corollary  \ref{bounds}.  Observe that
\[
|\widehat{\mu_{a,\lambda}}(z)| \geq   |\widehat{\mu_{a,\lambda}}(m)|/2
\]
for all $z \in \mathbb{R}$ with $|z-m| \leq 1/2$ and where $m \neq 0$ has a (unique) representation
\begin{equation} \label{unirep}
m = \sum_j \eps_j \lambda_j \qquad (\eps_j \in \{-1,0,1\}).
\end{equation}
Summing over such $m$,
\begin{equation} \label{easylower2} 
\mathcal{J}_{s,\theta}(\mu_{a,\lambda})^{1/\theta}   \gtrsim       \sum_{m} |\widehat{\mu_{a,\lambda}}(m)|^{2/\theta} |m|^{s/\theta-1}.
\end{equation}
Then, by following the proof of \cite[Theorem 13.3]{mattila}, one obtains that the right hand side is finite if and only if $\mathcal{S}_\theta(s)  < \infty$.  For completeness we include the argument.  For $m \neq 0$ with unique representation \eqref{unirep}, let $k=k_m$ be the largest $j$ for which $\eps_j \neq 0$ and observe that $|m| \approx \lambda_k$.  Then, summing over such maximal $k$ instead of $m$, writing $b_j=2^{1/2-1/\theta}a_j^{1/\theta}$ and applying \eqref{ftrp},
\begin{align*}
  \sum_{m} |\widehat{\mu_{a,\lambda}}(m)|^{2/\theta} |m|^{s/\theta-1} & \approx \sum_{k=1}^\infty \sum_{j_1 < \cdots < j_l = k} 2^l \left(\Pi_{i=1}^l \frac{a_{j_i}}{2} \right)^{2/\theta} \lambda_k^{s/\theta-1}\\
& = \sum_{k=1}^\infty \sum_{j_1 < \cdots < j_l = k} \left(\Pi_{i=1}^l  b_{j_i} \right)^{2} \lambda_k^{s/\theta-1}\\
& = \sum_{k=1}^\infty b_k^2(1+b_1^2) \cdots (1+b_{k-1}^2) \lambda_k^{s/\theta-1}
\end{align*}
and finiteness of the last sum is easily seen to be equivalent to 
\[
\sum_{k=1}^\infty    \lambda_k^{s/\theta-1}\Pi_{j=1}^k (1+ b_j^2) =  \mathcal{S}_\theta(s)  < \infty.
\]
This proves that 
\begin{equation} \label{needed}
 \sum_{m} |\widehat{\mu_{a,\lambda}}(m)|^{2/\theta} |m|^{s/\theta-1} < \infty \quad \Leftrightarrow \quad \mathcal{S}_\theta(s)  < \infty
\end{equation}
as required.  In particular, by  \eqref{easylower2} and \eqref{needed}, the upper bound holds.  For the lower bound, observe that
\[
|\widehat{\mu_{a,\lambda}}(z)| \lesssim \sum_m |\widehat{\mu_{a,\lambda}}(m)|\min\{ 1 , |z-m|^{-1}\}
\]
for $z \in \mathbb{R}$ where the sum is over integers $m \neq 0$ with (unique) representation \eqref{unirep}. Fix $\theta \in (0,1]$, $0<s<\theta$ and $0<\eps< (\theta-s)/(2-\theta)$. Note that  $\mathcal{S}_\theta(s) = \infty$  for $s > \theta$. By Jensen's inequality,
\begin{align*}
|\widehat{\mu_{a,\lambda}}(z)|^{2/\theta}  &\lesssim_\eps \sum_m |\widehat{\mu_{a,\lambda}}(m)|^{2/\theta}\min\left\{ 1 , \frac{1}{|z-m|^{1-\eps(2/\theta-1)}}\right\}.
\end{align*}
Then,  applying Fubini's theorem,
\begin{align*}
\mathcal{J}_{s,\theta}(\mu_{a,\lambda})^{1/\theta}  &=\int_\mathbb{R} |\widehat{\mu_{a,\lambda}}(z)|^{2/\theta} |z|^{s/\theta-1} \, dz\\ 
 &\lesssim_\eps \sum_m |\widehat{\mu_{a,\lambda}}(m)|^{2/\theta}\int_\mathbb{R} \min\left\{ |z|^{s/\theta-1}  , \frac{|z|^{s/\theta-1} }{|z-m|^{1-\eps(2/\theta-1)}}\right\} \, dz  \\
 &\lesssim \sum_m |\widehat{\mu_{a,\lambda}}(m)|^{2/\theta} \bigg(\int_{|z-m| \leq 1}  |m|^{s/\theta-1}  \, dz + \int_{1 \leq |z-m| \leq m/2}  \frac{|m|^{s/\theta-1} }{|z-m|^{1-\eps(2/\theta-1)}} \, dz \\
&\, \qquad \quad   + \int_{ m/2 \leq |z-m| \leq 2m}  \frac{|z|^{s/\theta-1}}{|m|^{1 -\eps(2/\theta-1)}} \, dz  + \int_{  |z-m| \geq 2m}  \frac{1}{|z-m|^{2-s/\theta -\eps(2/\theta-1)}} \, dz \bigg)\\
 &\lesssim \sum_m |\widehat{\mu_{a,\lambda}}(m)|^{2/\theta} \bigg(  |m|^{s/\theta-1}  + |m|^{s/\theta-1} |m|^{\eps(2/\theta-1)} \\
&\, \qquad \qquad \qquad  +|m|^{-1+\eps(2/\theta-1)}  |m|^{s/\theta }  +  | m|^{-1+s/\theta +\eps(2/\theta-1)}  \bigg)\\
&\lesssim \sum_m |\widehat{\mu_{a,\lambda}}(m)|^{2/\theta}  |m|^{s/\theta-1+\eps(2/\theta-1)} \\
& <\infty
\end{align*}
provided $s +\eps(2-\theta)< \sup\{s \leq  \theta : \mathcal{S}_\theta(s)  < \infty\}$ using the equivalence \eqref{needed}.  Since $\eps>0$ can be chosen arbitrarily small, the lower bound follows.  
\end{proof}

We note a pleasant explicit formula  in the following simple case.

\begin{cor} \label{riesz2}
If $\lambda_j=\lambda^j \geq 3$ and $a_j=a \in [-1,1]$ with $a \neq 0$ and $\lambda\geq 3$ constants, then,  for $\theta \in [0,1]$, 
\[
\fs \mu_{a,\lambda} = \theta-\theta \frac{\log(1+|a|^{2/\theta}2^{1-2/\theta})}{\log \lambda}.
\]
\end{cor}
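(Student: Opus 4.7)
The plan is to specialize Theorem \ref{riesz1} and observe that $\mathcal{S}_\theta(s)$ degenerates to a single geometric series. First I would verify the hypotheses: with $\lambda_j = \lambda^j$, the ratio $\lambda_{j+1}/\lambda_j = \lambda \geq 3$, so we can take $C = \lambda$; and since $a_j = a$ is a fixed nonzero constant,
\[
\liminf_{k \to \infty} \frac{-2 \log|a|}{\log \lambda_k} = \liminf_{k \to \infty} \frac{-2 \log|a|}{k \log \lambda} = 0,
\]
so Theorem \ref{riesz1} applies for $\theta \in (0,1]$.

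The heart of the calculation is that both ingredients of $\mathcal{S}_\theta(s)$ are geometric in $k$. Writing $b = 1 + |a|^{2/\theta} 2^{1-2/\theta}$, the product $\prod_{j=1}^k (1 + |a_j|^{2/\theta} 2^{1-2/\theta})$ equals $b^k$ while $\lambda_k^{s/\theta - 1} = \lambda^{k(s/\theta - 1)}$, so
\[
\mathcal{S}_\theta(s) = \sum_{k=1}^\infty \bigl(\lambda^{s/\theta - 1} b\bigr)^k.
\]
This converges precisely when $\lambda^{s/\theta - 1} b < 1$, equivalently $s < \theta - \theta \log b / \log \lambda$. Since $a \neq 0$ gives $b \geq 1$, the threshold lies in $[0, \theta]$, so the constraint $s \leq \theta$ in Theorem \ref{riesz1}'s supremum is automatic. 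This yields the asserted formula for all $\theta \in (0,1]$.

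For $\theta = 0$, I would argue by continuity. Since $\mu_{a,\lambda}$ is supported on $[0,1]$ its Fourier transform is Lipschitz, so Theorem \ref{cty0} gives continuity of $\theta \mapsto \fs \mu_{a,\lambda}$ on $[0,1]$. The right-hand side of the formula also extends continuously to $\theta = 0$: the factor $|a|^{2/\theta} 2^{1-2/\theta} = 2(|a|/2)^{2/\theta}$ decays exponentially as $\theta \to 0^+$ (since $|a|/2 \leq 1/2$), so $\theta \log(1 + |a|^{2/\theta} 2^{1-2/\theta})/\log \lambda \to 0$ and the formula evaluates to $0 = \fd \mu_{a,\lambda}$. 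No substantive obstacle arises; the corollary is essentially a one-line specialization of Theorem \ref{riesz1} once the geometric structure is spotted.
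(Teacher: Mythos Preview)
Your proposal is correct and matches the paper's approach: the paper gives no explicit proof for this corollary, treating it as an immediate specialization of Theorem~\ref{riesz1}, which is exactly what you do. Your handling of the endpoint $\theta=0$ via Theorem~\ref{cty0} is the natural way to close the gap left by Theorem~\ref{riesz1} (which only covers $\theta\in(0,1]$).
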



\subsection{An example with discontinuity at $\theta=0$}

Here we construct a (necessarily unbounded) measure for which the Fourier spectrum is discontinuous at $\theta = 0$.  The construction is similar to those above, but utilises the unbounded domain to achieve rapid Fourier decay around isolated peaks.  

\begin{prop} \label{discont}
There exists a finite Borel measure on $\mathbb{R}$ for which $\fs \mu$ is not continuous at $\theta=0$.
\end{prop}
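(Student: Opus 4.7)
The plan is to construct $\mu$ as a series $\sum_{k\geq 1}\mu_k$ of smooth absolutely continuous measures, each chosen so that $\widehat{\mu_k}$ has a sharp, narrow, isolated peak at a rapidly growing frequency $z_k$. The parameters will be tuned so that the peaks persist in absolute value (forcing $\fd\mu=0$) yet are so narrow that their contribution to every weighted energy $\mathcal{J}_{s,\theta}(\mu)$ with $\theta>0$ is finite, giving $\fs \mu>0$ (in fact $\infty$) for every $\theta>0$ and hence the desired jump discontinuity at $\theta=0$.

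Fix a non-negative Schwartz function $\phi$ on $\mathbb{R}$ with $\widehat\phi(0)=\int\phi=1$ and choose $z_k=2^{2^k}$, $\lambda_k=z_k^{-k}$. Set
\[
d\mu_k(x) := \frac{1}{k^{2}}\bigl(1+\cos(2\pi z_k x)\bigr)\lambda_k\phi(\lambda_k x)\,dx, \qquad \mu := \sum_{k\geq 1}\mu_k.
\]
A direct computation gives $\|\mu_k\|=k^{-2}\bigl(1+\Re\widehat\phi(z_k/\lambda_k)\bigr)=k^{-2}\bigl(1+O_N(z_k^{-N(k+1)})\bigr)$ for every $N$, so $\mu$ is a finite positive Borel measure. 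Its Fourier transform is
\[
\widehat{\mu_k}(z) = \frac{1}{k^2}\Bigl[\widehat\phi(z/\lambda_k)+\tfrac{1}{2}\widehat\phi\bigl((z-z_k)/\lambda_k\bigr)+\tfrac{1}{2}\widehat\phi\bigl((z+z_k)/\lambda_k\bigr)\Bigr].
\]
At $z=z_k$ the middle term equals $1/(2k^2)$; the other two contributions (and all contributions from $j\neq k$) are super-polynomially small because $z_k/\lambda_k=z_k^{k+1}$ and $|z_k\pm z_j|/\lambda_j$ are enormous while $\widehat\phi$ is Schwartz. Hence $|\widehat\mu(z_k)|\gtrsim k^{-2}$, and since $\log(1/|\widehat\mu(z_k)|)/\log z_k\sim 2\log k/2^k\to 0$ we deduce $\fd\mu=0$.

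Now fix $\theta\in(0,1]$ and any $s>0$; the task is to show $\mathcal{J}_{s,\theta}(\mu)<\infty$. Split $\mathbb{R}$ into $\{|z|\leq 1\}$, where $|\widehat\mu|\leq\|\mu\|$ and $|z|^{s/\theta-1}$ is locally integrable since $s>0$, and the tail $\{|z|\geq 1\}$. On the tail, the Schwartz decay $|\widehat\phi(y)|\leq C_N(1+|y|)^{-N}$ combined with the separation $z_{k+1}\gg z_k$ and $\lambda_k\to 0$ implies that at each $z$ only $O(1)$ of the summands $\widehat{\mu_k}(z)$ are non-negligible, yielding the pointwise decoupling
\[
|\widehat\mu(z)|^{2/\theta}\lesssim\sum_{k\geq 1}|\widehat{\mu_k}(z)|^{2/\theta}.
\]
Termwise integration via the substitution $u=(z\mp z_k)/\lambda_k$ produces a contribution of order $k^{-4/\theta}\lambda_k z_k^{s/\theta-1}=k^{-4/\theta}z_k^{s/\theta-1-k}$, which is super-exponentially small in $k$ once $k>s/\theta$. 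The series converges, so $\mathcal{J}_{s,\theta}(\mu)<\infty$ for every $s>0$; therefore $\fs \mu=\infty$ for every $\theta\in(0,1]$ and the Fourier spectrum jumps at $\theta=0$.

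The principal obstacle is the pointwise decoupling on the tail: one must verify that at each $z\in\mathbb{R}$ at most boundedly many of the Schwartz bumps centred at $0,\pm z_1,\pm z_2,\ldots$ (with respective widths $\lambda_k$) contribute non-trivially. This is handled by a case split on which dyadic window $[z_k/2,2z_k]$ contains $z$, using the enormous growth of $(z_k)$ and the rapid decay of $\widehat\phi$ to make the off-peak terms negligible. Once this is in place, the remaining estimates are routine changes of variables against Schwartz functions.
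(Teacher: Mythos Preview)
Your construction is correct and follows the same underlying idea as the paper: build a measure whose Fourier transform has a sequence of peaks that fail to decay (forcing $\fd\mu=0$) but are so narrow that the weighted $L^{2/\theta}$ integrals converge. The paper realises this with densities $n^{-2}n^{-n}(2+\sin(2\pi 2^n x))\mathbf{1}_{[0,n^n]}$, so that the indicator on $[0,n^n]$ makes the peak at $2^n$ have width $\sim n^{-n}$; this yields $\fs\mu=2$ for $\theta>0$. You instead use Schwartz mollifiers at scale $\lambda_k=z_k^{-k}$ with double-exponential $z_k=2^{2^k}$, which gives even narrower peaks and pushes the spectrum all the way to $\fs\mu=\infty$ for $\theta>0$ --- a more dramatic discontinuity. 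Your Schwartz-based approach is arguably cleaner on the Fourier side, while the paper's indicator-based approach gives a more hands-on pointwise bound on $|\widehat\mu(z)|$.

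One point to tighten: the pointwise inequality $|\widehat\mu(z)|^{2/\theta}\lesssim\sum_k|\widehat{\mu_k}(z)|^{2/\theta}$ does not follow merely from ``$O(1)$ non-negligible terms'' unless you also control the tail sum by the maximum, which is delicate when $z$ sits in a gap and all terms are tiny. A clean fix is Jensen with weights $w_k=ck^{-2}$, giving $|\widehat\mu(z)|^{2/\theta}\lesssim_\theta\sum_k k^{4/\theta-2}|\widehat{\mu_k}(z)|^{2/\theta}$; the extra polynomial in $k$ is harmless against your super-exponential factor $z_k^{-k}$. Alternatively, bypass the decoupling entirely and estimate the integral directly via the window/gap decomposition you already outline. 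Either way the conclusion stands.
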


\begin{proof}
Define $f: \mathbb{R} \to [0,\infty)$ by
\[
f(x) = \sum_{n=1}^\infty n^{-2} n^{-n}\left(2+ \sin(2\pi2^nx) \right) \textbf{1}_{[0,n^n]}(x)
\]
where $\textbf{1}_{[0,n^n]}$ is the indicator function on $[0,n^n]$. Define an (unbounded) measure $\mu$ by $\mu = f dx$ noting that
\[
\mu(\mathbb{R}) \leq  \sum_n 3(n^{-2} n^{-n}) n^n = 3\sum_n n^{-2}   < \infty.
\]
For integers $n \geq 1$,
\[
|\widehat \mu(2^n) | \approx  (n^{-2} n^{-n})n^n = n^{-2}
\]
and so $\fd \mu = \dim_\textup{F}^0 \mu = 0$.  Moreover, for all $z \in \mathbb{R}$,
\[
|\widehat \mu(z) | \lesssim \max_n \min\left\{ n^{-2}  , \frac{1}{n^n  |z-2^n|} \right\} \lesssim \begin{cases} 
      n^{-2} & \frac{n^n 2^n}{n^n+1} \leq |z| \leq \frac{n^n 2^n}{n^n-1} \text{ (for some $n \geq 3$)} \\
      |z|^{-1}& \text{otherwise}
   \end{cases}
\]
Therefore, for $\theta \in (0,1]$ and $s >0$,
\begin{align*}
\mathcal{J}_{s, \theta} (\mu)^{1/\theta} &\lesssim  \int   |z|^{-2/\theta} |z|^{s/\theta-1} \, dz  +  \sum_n n^{-4/\theta}  \int_{\frac{n^n 2^n}{n^n+1} \leq |z| \leq \frac{n^n 2^n}{n^n-1}}     |z|^{s/\theta-1} \, dz  \\
&\lesssim   \int   |z|^{(s-2)/\theta-1} \, dz  +   \sum_n n^{-4/\theta}     2^{n(s/\theta-1)}2^n n^{-n}  \\
&<\infty
\end{align*}
provided $s<2$.  Therefore, $\fs \mu \geq 2$ for $\theta \in (0,1]$ and $\fs \mu$ is not continuous at $\theta = 0$. 
\end{proof}

It is easy to adapt the above calculation to show that $\fs \mu = 2$ for $\theta \in (0,1]$.  Further, this example can be modified easily to obtain different behaviour at the discontinuity, including positive Fourier dimension and arbitrarily large jumps.

\section{Connection to Strichartz  bounds and average Fourier dimensions}

Strichartz \cite{stric1, stric2} considered bounds for  averages of the  Fourier transform of the form
\[
R^{d-\beta_k} \lesssim \int_{|z| \leq R} |\widehat \mu (z)|^{2k} \, dz \lesssim R^{d-\alpha_k}
\]
for integers $k \geq 1$ and $0 \leq \alpha_k \leq \beta_k$. Motivated by this, for $\theta \in (0,1]$, let
\[
\overline{F}_\mu(\theta) = \limsup_{R \to \infty} \frac{\theta \log \left(R^{-d} \int_{|z| \leq R} |\widehat \mu (z)|^{2/\theta} \, dz \right)}{-\log R}
\]
and
\[
\underline{F}_\mu(\theta) = \liminf_{R \to \infty} \frac{\theta \log \left(R^{-d} \int_{|z| \leq R} |\widehat \mu (z)|^{2/\theta}\, dz \right)}{-\log R}.
\]
Equivalently, $\overline{F}_\mu(\theta)$ is the infimum of $\beta \geq 0$ for which
\[
R^{d-\beta/\theta} \lesssim \int_{|z| \leq R} |\widehat \mu (z)|^{2/\theta} \, dz
\]
and  $\underline{F}_\mu(\theta)$ is the supremum  of $\alpha\geq 0$ for which
\[
  \int_{|z| \leq R} |\widehat \mu (z)|^{2/\theta} \, dz\lesssim R^{d-\alpha/\theta}.
\]
One can interpret $\underline{F}_\mu(\theta)$ as a `$\theta$-averaged Fourier dimension'. Note that
\[
 \int_{\rd } |\widehat \mu (z)|^{2/\theta} \, dz= \lim_{R \to \infty} \int_{|z| \leq R} |\widehat \mu (z)|^{2/\theta}\, dz
\]
exists and is either a positive finite number or $+\infty$.   In the former case $\underline{F}_\mu(\theta) = \overline{F}_\mu(\theta) = \theta d$ and in the latter case   $0 \leq \underline{F}_\mu(\theta) \leq \overline{F}_\mu(\theta) \leq \theta d$.  There is a connection between the Fourier spectrum and these average Fourier dimensions.

\begin{thm} \label{stric3}
Let $\mu$ be a finite Borel measure on $\mathbb{R}^d$ and $\theta \in (0,1]$.  Then
\[
\fs \mu \geq   \underline{F}_\mu(\theta).
\]
\end{thm}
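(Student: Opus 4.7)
The plan is to show that for every $s$ with $0 < s < \underline{F}_\mu(\theta)$ we have $\mathcal{J}_{s,\theta}(\mu) < \infty$; letting $s \nearrow \underline{F}_\mu(\theta)$ then yields $\fs \mu \geq \underline{F}_\mu(\theta)$. The case $\underline{F}_\mu(\theta) = 0$ is trivial since $\fs \mu \geq 0$ by definition, so we may assume $\underline{F}_\mu(\theta)>0$. Fix an auxiliary $\alpha$ with $s < \alpha < \underline{F}_\mu(\theta)$; by definition of $\underline{F}_\mu(\theta)$ we then have the Strichartz-type bound
\[
\int_{|z| \leq R} |\widehat \mu(z)|^{2/\theta} \, dz \lesssim R^{d - \alpha/\theta}
\]
for all $R$ sufficiently large.

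The approach is a dyadic Littlewood--Paley decomposition. I would split the defining integral as
\[
\int_{\rd} |\widehat \mu(z)|^{2/\theta} |z|^{s/\theta - d} \, dz = \int_{|z| \leq 1} |\widehat \mu(z)|^{2/\theta} |z|^{s/\theta-d} \, dz + \sum_{k=0}^{\infty} \int_{A_k} |\widehat \mu(z)|^{2/\theta} |z|^{s/\theta-d} \, dz,
\]
where $A_k = \{z \in \rd : 2^k \leq |z| < 2^{k+1}\}$. For the low-frequency piece I would bound $|\widehat \mu|$ by $\mu(\rd)$; since $s > 0$, the weight $|z|^{s/\theta-d}$ is integrable at the origin in $\rd$, so this contribution is finite. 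For each annular piece, note that on $A_k$ the weight satisfies $|z|^{s/\theta - d} \lesssim 2^{k(s/\theta - d)}$ (regardless of the sign of the exponent, absorbing a harmless factor when it is positive). Applying the Strichartz-type bound with $R = 2^{k+1}$ then gives
\[
\int_{A_k} |\widehat \mu(z)|^{2/\theta} |z|^{s/\theta - d} \, dz \lesssim 2^{k(s/\theta - d)} \int_{|z| \leq 2^{k+1}} |\widehat \mu(z)|^{2/\theta} \, dz \lesssim 2^{k(s/\theta - d)} \cdot 2^{k(d - \alpha/\theta)} = 2^{k(s - \alpha)/\theta}.
\]
Since $s < \alpha$ and $\theta > 0$, the resulting geometric series $\sum_{k \geq 0} 2^{k(s-\alpha)/\theta}$ converges, so $\mathcal{J}_{s,\theta}(\mu) < \infty$ and hence $\fs \mu \geq s$, as required.

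There is not really a main obstacle here: the argument is a routine dyadic splitting. The only minor technical points are the need for $s > 0$ to ensure integrability of the kernel at the origin, and the absorption of the finitely many initial scales (where the Strichartz-type bound may only hold up to the implicit constant) into the overall $\lesssim$.
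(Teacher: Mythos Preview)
Your proof is correct and follows essentially the same approach as the paper: a dyadic decomposition of the energy integral into annuli, bounding the weight $|z|^{s/\theta-d}$ by its value at the inner radius, enlarging each annular integral to a ball, applying the Strichartz-type bound, and summing the resulting geometric series. The only cosmetic differences are that the paper uses a general base $c>1$ instead of $2$ and absorbs the low-frequency piece directly into the implicit constant rather than treating it separately.
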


\begin{proof}
We may assume $\underline{F}_\mu(\theta) >0$.  Let $c>1$  and $0<s <\alpha<\underline{F}_\mu(\theta).$  Then
\begin{align*}
\mathcal{J}_{s,\theta}(\mu)^{1/\theta} = \int_{\mathbb{R}^d} |\widehat \mu(z)  |^{2/\theta} |z|^{s/\theta-d} \, dz 
&  \lesssim 1+ \sum_{k =1}^\infty  \int_{c^{k-1} <|z| \leq c^k} |\widehat \mu(z)  |^{2/\theta} |z|^{s/\theta-d} \, dz  \\
&  \approx 1+ \sum_{k =1}^\infty   c^{k(s/\theta-d)} \int_{c^{k-1} <|z| \leq c^k} |\widehat \mu(z)  |^{2/\theta}  \, dz  \\
&  \leq   1+ \sum_{k =1}^\infty c^{k(s/\theta-d)}    \int_{|z| \leq c^k} |\widehat \mu(z)  |^{2/\theta} \, dz  \\
&  \lesssim  1+ \sum_{k =1}^\infty c^{k(s/\theta-d)}    c^{k(d-\alpha/\theta)}  \\
&< \infty
\end{align*}
proving $\fs \mu \geq \alpha$, which proves the result.
\end{proof}

The connection becomes stronger, and in fact $\fs \mu$ and $\underline{F}_\mu(\theta)$ coincide, in the following special case.

\begin{thm} \label{stric1}
Let $\mu$ be a finite Borel measure on $\mathbb{R}^d$ and $\theta \in (0,1]$. If $ \underline{F}_\mu(\theta) <  \theta d$, then
\[
\fs \mu =   \underline{F}_\mu(\theta).
\]
\end{thm}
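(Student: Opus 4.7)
The lower bound $\fs \mu \geq \underline{F}_\mu(\theta)$ is already provided by Theorem \ref{stric3}, so only the matching upper bound $\fs \mu \leq \underline{F}_\mu(\theta)$ needs attention. I plan to argue by contradiction, using the hypothesis $\underline{F}_\mu(\theta) < \theta d$ in an essential way to arrange a favourable sign on the radial weight in $\mathcal{J}_{s,\theta}$.

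Assuming $\fs \mu > \underline{F}_\mu(\theta)$, I would first choose $s$ with $\underline{F}_\mu(\theta) < s < \min\{\fs \mu, \theta d\}$; such an $s$ exists precisely because the hypothesis and the contradiction assumption together force $\underline{F}_\mu(\theta) < \min\{\fs \mu, \theta d\}$. The choice $s < \fs \mu$ guarantees $\mathcal{J}_{s,\theta}(\mu) < \infty$, while the choice $s < \theta d$ is what will drive the estimate. Next, fixing $\alpha$ strictly between $\underline{F}_\mu(\theta)$ and $s$, the liminf definition of $\underline{F}_\mu(\theta)$ supplies a sequence $R_k \to \infty$ along which
\[
G(R_k) := \int_{|z| \leq R_k} |\widehat \mu(z)|^{2/\theta} \, dz \geq R_k^{d - \alpha/\theta}.
\]

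The decisive step is then essentially one line. Since $s/\theta - d < 0$, the radial weight $|z|^{s/\theta - d}$ is decreasing in $|z|$, so $|z|^{s/\theta - d} \geq R_k^{s/\theta - d}$ throughout $\{|z| \leq R_k\}$. Truncating the defining integral of $\mathcal{J}_{s,\theta}(\mu)$ to this ball and inserting the lower bound on $G(R_k)$ yields
\[
\mathcal{J}_{s,\theta}(\mu)^{1/\theta} \geq R_k^{s/\theta - d} \, G(R_k) \geq R_k^{(s-\alpha)/\theta} \longrightarrow \infty,
\]
contradicting the finiteness of $\mathcal{J}_{s,\theta}(\mu)$ and forcing $\fs \mu \leq \underline{F}_\mu(\theta)$.

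I do not anticipate any real obstacle, only a subtlety in seeing where the hypothesis enters: $\underline{F}_\mu(\theta) < \theta d$ is precisely what allows $s$ to be placed strictly below $\theta d$, which in turn makes the radial weight decreasing, so that a single-scale lower bound on $G$ translates into an honest lower bound on the full weighted integral. For $s \geq \theta d$ the weight grows at infinity and this monotonicity trick breaks down, which is consistent with the result being stated only in the regime $\underline{F}_\mu(\theta) < \theta d$.
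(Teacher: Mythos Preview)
Your proposal is correct and follows essentially the same route as the paper: both arguments use that $s/\theta - d < 0$ to bound $|z|^{s/\theta-d} \geq R^{s/\theta-d}$ on $\{|z|\leq R\}$, then insert the lower bound $G(R)\gtrsim R^{d-\alpha/\theta}$ (your $\alpha$ is the paper's $\beta$) along a sequence of scales. The only cosmetic difference is that you frame it as a contradiction while the paper proves $\mathcal{J}_{s,\theta}(\mu)=\infty$ directly for all $s\in(\beta,\theta d)$.
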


\begin{proof}
The lower bound $\fs \mu \geq   \underline{F}_\mu(\theta)$ comes from Theorem  \ref{stric3} and so we prove the upper bound.  Choose $\underline{F}_\mu(\theta)  <\beta < s< \theta d$.  Then there exist arbitrarily large $R>0$ satisfying
\[
 \int_{|z| \leq R} |\widehat \mu(z)  |^{2/\theta} \, dz  \gtrsim R^{d-\beta/\theta}
\]
and
\begin{align*}
\mathcal{J}_{s,\theta}(\mu)^{1/\theta} = \int_{\mathbb{R}^d} |\widehat \mu(z)  |^{2/\theta} |z|^{s/\theta-d} \, dz 
&  =  \sup_{R>0} \int_{|z| \leq R} |\widehat \mu(z)  |^{2/\theta} |z|^{s/\theta-d} \, dz  \\
&  \geq \sup_{R>0}R^{s/\theta-d}  \int_{|z| \leq R} |\widehat \mu(z)  |^{2/\theta} \, dz  \\
&  \gtrsim \sup_{R>0}R^{s/\theta-d}  R^{d-\beta/\theta}  \\
&=\infty
\end{align*}
which proves $\fs \mu \leq \beta$, proving the result.
\end{proof}

\subsection{Self-similar measures}

Combining \cite[Theorem 2]{bisbas} for $\theta \in (0,1)$ and   \cite[Theorem 4.4]{stric1} for $\theta=1$ with  Theorem  \ref{stric3} we get the following.
 
\begin{cor} \label{selfsim1}
For $p \in (0,1)$ with $p \neq 1/2$, let  $\mu_p$ be the self-similar measure on $[0,1]$ given by the distribution of the random series
\[
\sum_{n=0}^\infty X_n (1/2)^n
\]
where $\mathbb{P}(X_n=0)=p$ and $\mathbb{P}(X_n=1)=(1-p)$.  Then
\[
\fs \mu_p \geq  \underline{F}_{\mu_p}(\theta)  \geq  \theta - \theta \log_2(1+|2p-1|^{2/\theta})
\]
for all $\theta \in (0,1)$ and
\[
\sd \mu_p = \underline{F}_{\mu_p}(1)  = 1 -  \log_2(1+|2p-1|^{2}).
\]
In particular, $\fs \mu_p>\theta \sd \mu_p$ for all $\theta \in (0,1)$,  
\[
\lim_{\theta \to 0} \frac{\fs \mu_p}{\theta} \geq 1
\]
and
\[
\sd \mu_p < \hd \mu_p = \frac{p \log p + (1-p)\log (1-p)}{-\log 2}.
\]
\end{cor}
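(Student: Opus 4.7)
The overall approach is to combine the two cited analytic estimates on Fourier integrals of $\mu_p$ with the general Theorems \ref{stric3} and \ref{stric1}. First, I would invoke \cite[Theorem 2]{bisbas}, which for $\theta \in (0,1)$ provides the needed upper bound on $\int_{|z| \le R} |\widehat{\mu_p}(z)|^{2/\theta}\,dz$. Unpacking the definition of $\underline F_{\mu_p}(\theta)$, this yields
\[
\underline{F}_{\mu_p}(\theta) \ge \theta - \theta\log_2\bigl(1+|2p-1|^{2/\theta}\bigr),
\]
after which Theorem \ref{stric3} immediately promotes this to the desired lower bound on $\fs \mu_p$. For the $\theta = 1$ case, \cite[Theorem 4.4]{stric1} evaluates the relevant asymptotic precisely, giving $\underline{F}_{\mu_p}(1) = 1 - \log_2(1+(2p-1)^2)$. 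Since $p \neq 1/2$ forces this quantity to be strictly less than $d=1$, Theorem \ref{stric1} applies and upgrades the lower bound to the equality $\sd \mu_p = \underline{F}_{\mu_p}(1)$.

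The three ``in particular'' consequences are then short algebraic verifications. Writing $a = |2p-1| \in (0,1)$, the strict monotonicity $a^{2/\theta} < a^{2}$ for $\theta \in (0,1)$ gives $\log_2(1+a^{2/\theta}) < \log_2(1+a^2)$, so
\[
\fs \mu_p \ge \theta - \theta\log_2(1+a^{2/\theta}) > \theta\bigl(1 - \log_2(1+a^2)\bigr) = \theta \sd \mu_p.
\]
Dividing by $\theta$ and using $a^{2/\theta} \to 0$ as $\theta \to 0^+$ gives the claim $\liminf_{\theta \to 0}\fs \mu_p/\theta \ge 1$.

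The Hausdorff dimension identity $\hd \mu_p = -(p\log p + (1-p)\log(1-p))/\log 2$ is classical for symmetric Bernoulli-type self-similar measures on $[0,1]$ and follows from Billingsley's theorem on local dimensions. The strict inequality $\sd \mu_p < \hd \mu_p$ then reduces, on substituting both explicit formulas and setting $p = (1+a)/2$, to
\[
\tfrac{1+a}{2}\log(1+a) + \tfrac{1-a}{2}\log(1-a) < \log(1+a^2) \qquad (0 < a < 1),
\]
which is an elementary calculus exercise: both sides and their first derivatives vanish at $a=0$, the second derivatives at zero are $1$ and $2$ respectively, and monotonicity on $(0,1)$ completes the comparison.

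I expect the main obstacle to be essentially organisational, namely checking that the Bisbas and Strichartz results, originally stated as Fourier asymptotics for Bernoulli convolutions or as $L^q$-type Fourier estimates, translate faithfully into statements about the averaged Fourier dimension $\underline F_{\mu_p}(\theta)$ as defined in this paper. Once this translation is in place, the rest of the proof is a short assembly of the quoted estimates together with the algebraic checks above.
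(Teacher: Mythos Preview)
Your proposal is correct and follows essentially the same route as the paper: cite Bisbas for the $\theta\in(0,1)$ averaged Fourier estimate, Strichartz for the exact $\theta=1$ asymptotic, and then feed these into Theorems~\ref{stric3} and~\ref{stric1}. The paper's own proof is a single sentence citing these references together with Theorem~\ref{stric3}; your version is simply more explicit, and in particular you correctly observe that the \emph{equality} $\sd\mu_p=\underline{F}_{\mu_p}(1)$ requires Theorem~\ref{stric1} (using $p\neq 1/2$ so that $\underline{F}_{\mu_p}(1)<1$), a point the paper leaves implicit.

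One small caution on the final calculus check: the second-derivative comparison you suggest does not hold globally on $(0,1)$ (for instance at $a=1/2$ one has $\tfrac{1}{1-a^2}=4/3>2(1-a^2)/(1+a^2)^2=24/25$), so ``monotonicity completes the comparison'' needs a slightly different argument. The inequality itself is certainly true and elementary, but you should either integrate the derivative inequality $\tfrac{1}{2}\log\tfrac{1+a}{1-a}<\tfrac{2a}{1+a^2}$ directly, or appeal to the standard fact that the correlation dimension of a non-uniform Bernoulli measure is strictly below its Hausdorff dimension.
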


In the above, if $p=1/2$, then $\mu_p$ is Lebesgue measure restricted to $[0,1]$ and 
\[
\fs \mu_p = \fd \mu_p = \sd \mu_p = 1
\]
for all $\theta \in (0,1)$.  We can also apply Strichartz' work \cite{stric1, stric2} to get some partial information for other self-similar measures. For example, we get some non-trivial information about the Fourier spectrum of self-similar measures on the middle third Cantor set.  Similar estimates, sometimes for more choices of $\theta$, can also be deduced from \cite{stric1, stric2} for other self-similar measures but we leave the details to the reader.

\begin{cor} \label{selfsim2}
For $p \in (0,1)$, let  $\mu_p$ be the self-similar measure on the middle third Cantor set corresponding to Bernoulli weights $p, (1-p)$.    Then
\[
\fd \mu = \dim_\textup{F}^0 \mu = 0
\]
\[
 \dim_\textup{F}^{1/2} \mu = \frac{\log (p^4+4p^2(1-p)^2+(1-p)^4)}{-2\log 3}
\]
and
\[
\sd \mu = \dim_\textup{F}^1 \mu =  \frac{\log (p^2+(1-p)^2)}{-\log 3} < 2  \dim_\textup{F}^{1/2} \mu.
\]
In particular, $\fs \mu$ is not a linear function.
\end{cor}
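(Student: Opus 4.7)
The plan is to exploit the self-similarity of $\mu$, which via the functional equation
\[
\widehat\mu(\xi) = P(\xi/3)\,\widehat\mu(\xi/3), \qquad P(\eta) := p + (1-p)e^{-4\pi i \eta},
\]
reduces everything to integrals of the periodic weight $h(\eta) := |P(\eta)|^2 = p^2+(1-p)^2 + 2p(1-p)\cos(4\pi \eta)$. I would first establish $\fd \mu = 0$ by iterating the functional equation to get $|\widehat\mu(3^k)| = |\widehat\mu(1)|$ for every $k \geq 0$, and noting that the infinite product defining $\widehat\mu(1)$ is a nonzero complex number (each factor $P(2/3^n)$ has modulus at least $|2p-1|$ when $p\ne 1/2$, and the case $p=1/2$ is a separate trigonometric check). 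Since a fixed positive lower bound along $\xi = 3^k$ precludes any polynomial decay, $\fd \mu = 0$.

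Next, for $k \in \{1,2\}$ I would track the moments $I_k(R) = \int_{|\xi|\le R} |\widehat\mu(\xi)|^{2k}\, d\xi$. Substituting $\xi = 3\eta$ and applying the functional equation gives
\[
I_k(3R) = 3\int_{|\eta|\le R} h(\eta)^k |\widehat\mu(\eta)|^{2k}\, d\eta,
\]
and since $h^k$ is $(1/2)$-periodic an equidistribution/Riemann-sum argument (of the type made rigorous by Strichartz \cite{stric1,stric2}) replaces $h^k$ by its mean $q_k := 2\int_0^{1/2} h(\eta)^k\, d\eta$, producing the recursion $I_k(3R) \asymp 3 q_k I_k(R)$ and hence $I_k(R) \asymp R^{1+\log_3 q_k}$. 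An elementary trigonometric calculation (using $\int_0^{1/2}\cos(4\pi\eta)\, d\eta = 0$ and $\int_0^{1/2}\cos^2(4\pi\eta)\, d\eta = 1/4$) yields
\[
q_1 = p^2 + (1-p)^2, \qquad q_2 = p^4 + 4p^2(1-p)^2 + (1-p)^4.
\]
I expect the main technical obstacle to be making the equidistribution step rigorous in \emph{both} directions so that it controls $\underline F_\mu(1/k)$ and not merely $\overline F_\mu(1/k)$; this is essentially the content of Strichartz's moment bounds and I would invoke those directly rather than reprove them.

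These scaling laws translate, via Theorem \ref{stric1}, into Fourier-spectrum values: $\underline F_\mu(1/k) = -\log q_k/(k\log 3)$, and Theorem \ref{stric1} applies provided this number is strictly less than $d\theta = 1/k$, i.e.\ $q_k > 1/3$. A brief check confirms $q_1 = 1 - 2p(1-p) \ge 1/2 > 1/3$ and $q_2 = 1 - 4p(1-p) + 6p^2(1-p)^2 \ge 3/8 > 1/3$ throughout $p \in (0,1)$. Hence $\sd \mu = \log(p^2+(1-p)^2)/(-\log 3)$ and $\fd^{1/2}\mu = \log q_2/(-2\log 3)$.

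Finally, since $\fd \mu = 0$, linearity of $\theta \mapsto \fs \mu$ would force $\fd^{1/2}\mu = \sd\mu/2$, so the strict inequality $\sd\mu < 2\fd^{1/2}\mu$ both appears in the statement and implies non-linearity. This inequality reduces to $q_1 > q_2$. Writing $u = p(1-p) \in (0, 1/4]$ one has $q_1 = 1-2u$ and $q_2 = 1 - 4u + 6u^2$, so $q_1 - q_2 = 2u(1-3u) > 0$ for all $p \in (0,1)$, closing the argument.
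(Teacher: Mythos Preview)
Your approach is essentially the same as the paper's: invoke Strichartz's moment bounds for self-similar measures (the paper cites \cite[Corollary 4.4]{stric1}) together with the bridge Theorems \ref{stric3}/\ref{stric1}, and use that the middle third Cantor set has Fourier dimension $0$. If anything you are more careful---the paper's brief proof cites only Theorem \ref{stric3} (the one-sided bound) and leaves both the hypothesis check $\underline{F}_\mu(1/k)<1/k$ needed for Theorem \ref{stric1} and the algebra behind the strict inequality $q_1>q_2$ implicit, whereas you verify these explicitly.
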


\begin{proof}
The formulae for $ \dim_\textup{F}^{1/2} \mu$ and $ \dim_\textup{F}^{1} \mu$ come from  Theorem \ref{stric3} combined with \cite[Corollary 4.4]{stric1}.  The fact that $\fd \mu = 0$ follows from the well-known and easily proved fact that the Fourier dimension of the middle third Cantor set is 0.
\end{proof}

 It would be interesting to investigate the Fourier spectrum of self-similar measures more generally and also for other dynamically invariant.  In particular, there is a lot of interest currently on the Fourier dimension of invariant measures in various contexts, see for example \cite{li,  stevens, solomyak}.

\begin{ques}
What is $\fs \mu$ when  $\mu$ is a self-similar measure on the middle third Cantor set? What about more general self-similar measures, self-affine measures and other dynamically invariant measures?
\end{ques}


\section{Another example: measures on curves}

To bolster our collection of examples, here  we provide a simple family  where the Fourier spectrum can be computed explicitly and exhibits some non-trivial behaviour. Let $p >1$ and let $\mu_p$ be the lift of Lebesgue measure on $[0,1]$ to the curve $\{(x,x^p) : x \in [0,1]\} \subseteq \mathbb{R}^2$ via the map $x \mapsto (x,x^p)$.

\begin{thm} \label{xp}
For $p >1$ and $\theta \in [0,1]$,
\[
\fs \mu_p = \min\{ 2/p+\theta (1-1/p), \, 1\}.
\]
\end{thm}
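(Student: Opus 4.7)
My plan is to analyse the oscillatory integral
\[
\widehat{\mu_p}(z_1,z_2)=\int_0^1 e^{-2\pi i(z_1 x+z_2 x^p)}\, dx
\]
directly, pairing pointwise decay estimates with matching pointwise lower bounds on a two-dimensional tube. For $1<p\leq 2$ the phase satisfies $|\phi''(x)|=p(p-1)|z_2|x^{p-2}\gtrsim |z_2|$ on $[0,1]$, so van der Corput gives $|\widehat{\mu_p}(z)|\lesssim |z_2|^{-1/2}$ and hence $\fd\mu_p=1$; since the parametrisation is bi-Lipschitz off $0$, standard Riesz energy estimates give $\sd\mu_p=1$, and Corollary~\ref{bounds} then sandwiches $\fs\mu_p\equiv 1$, matching $\min\{2/p+\theta(1-1/p),\,1\}=1$ in this range. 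So from now on I assume $p>2$.

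For $p>2$ I would combine three standard decay estimates for $\widehat{\mu_p}$: (i) $|\widehat{\mu_p}(z)|\lesssim |z|^{-1}$ in the non-stationary regime (no critical point of $\phi$ in $[0,1]$); (ii) the interior stationary-phase estimate $|\widehat{\mu_p}(z)|^2\lesssim |z_2|^{-1}x_*^{-(p-2)}$ at the critical point $x_*=(-z_1/(pz_2))^{1/(p-1)}\in(0,1)$; and (iii) the boundary estimate $|\widehat{\mu_p}(z)|\lesssim |z_2|^{-1/p}$ in the regime $|z_1|\lesssim |z_2|^{1/p}$, obtained by the substitution $y=|z_2|^{1/p}x$ followed by van der Corput applied to the integral in $y$. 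For the lower bound $\fs\mu_p\geq \min\{1,\,2/p+\theta(1-1/p)\}$ I parametrise $(z_1,z_2)$ by $(\lambda,u)$ via $z_1=-p\lambda u^{p-1}$, $z_2=\lambda$ (with Jacobian $p(p-1)\lambda u^{p-2}$), split the $u$-integral at $u=\lambda^{-1/p}$, and substitute these estimates. Both pieces become elementary power integrals in $\lambda$ and $u$, and a case split on whether $\theta\leq(p-2)/(p-1)$ or $\theta\geq(p-2)/(p-1)$ (equivalently, on the sign of $(p-2)(1-1/\theta)+1$) shows that $\int|\widehat{\mu_p}|^{2/\theta}|z|^{s/\theta-2}\,dz$ is finite exactly when $s<\min\{1,\,2/p+\theta(1-1/p)\}$.

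The upper bound $\fs\mu_p\leq 1$ is immediate from $\fs\mu_p\leq\sd\mu_p$. To get the matching $\fs\mu_p\leq 2/p+\theta(1-1/p)$ I need $|\widehat{\mu_p}|$ bounded \emph{below} on a two-dimensional tube of the right size. The same rescaling as above gives
\[
\widehat{\mu_p}(w|z_2|^{1/p},z_2)=|z_2|^{-1/p}\int_0^{|z_2|^{1/p}} e^{-2\pi i(wy+y^p)}\,dy,
\]
and the truncated integral converges uniformly on compact sets of $w$ to $G(w):=\int_0^\infty e^{-2\pi i(wy+y^p)}\,dy$, with truncation error controlled by integration by parts using $|w+py^{p-1}|\gtrsim y^{p-1}$ in the tail. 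An explicit Fresnel computation gives $|G(0)|=\Gamma(1/p)/(p(2\pi)^{1/p})>0$, and continuity of $G$ then produces constants $c,c'>0$ with $|\widehat{\mu_p}(z_1,z_2)|\geq c|z_2|^{-1/p}$ whenever $|z_1|\leq c'|z_2|^{1/p}$ and $|z_2|$ is large. Integrating $|\widehat{\mu_p}|^{2/\theta}|z|^{s/\theta-2}$ over this tube (on which $|z|\approx|z_2|$ and which has area $\approx|z_2|^{1/p}\,d|z_2|$) reduces to $\int^\infty |z_2|^{(s-2/p)/\theta+1/p-2}\,d|z_2|$, which diverges precisely when $s\geq 2/p+\theta(1-1/p)$. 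The main obstacle is this uniform-in-$w$ scaling argument: a naive derivative-based lower bound for $\widehat{\mu_p}$ would only yield a tube of width $|z_2|^{-1/p}$ in $z_1$, which is far too narrow and misses the sharp upper bound.
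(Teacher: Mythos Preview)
Your proposal is correct and essentially complete. The decomposition underlying your lower bound is the same one the paper uses, just expressed in different coordinates: the paper works in polar coordinates $(r,\alpha)$ and splits according to whether $|\cos\alpha|$ lies above $\eps(p)$, between $r^{-(p-1)/p}$ and $\eps(p)$, or below $r^{-(p-1)/p}$, whereas you work in the critical-point coordinates $(\lambda,u)$ with $u=x_*$ and split at $u=\lambda^{-1/p}$. Since $u\approx|\cos\alpha|^{1/(p-1)}$ and $\lambda\approx r$ in the relevant sector, these are the same three regimes, and your estimates (i)--(iii) match the paper's pointwise bounds region by region. Your parametrisation is arguably tidier because the Jacobian factor $\lambda u^{p-2}$ exactly cancels against the stationary-phase weight, reducing everything to elementary power integrals.

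The genuine difference is in the upper bound. The paper obtains a direct lower bound on $|\widehat{\mu_p}|$ in the region $|\cos\alpha|\le r^{-q}$ by splitting the oscillatory integral at a carefully chosen intermediate point, bounding the short piece trivially and the long piece by an improper Fresnel-type integral, and then letting the auxiliary parameter $q\to(p-1)/p$. Your rescaling argument --- substituting $y=|z_2|^{1/p}x$, passing to the limit $G(w)=\int_0^\infty e^{-2\pi i(wy+y^p)}\,dy$, computing $G(0)\neq0$ explicitly, and invoking continuity --- is cleaner and more conceptual: it identifies the tube $\{|z_1|\le c'|z_2|^{1/p}\}$ of the correct width in one stroke, without an auxiliary parameter. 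The paper's version has the advantage that it works uniformly for all $p>1$ without the preliminary case split at $p=2$.

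One small gap: in the $1<p\le2$ paragraph, the bound $|\widehat{\mu_p}(z)|\lesssim|z_2|^{-1/2}$ alone does not give $\fd\mu_p\ge1$, since it is vacuous when $|z_2|$ is bounded. You also need the first-derivative estimate $|\widehat{\mu_p}(z)|\lesssim|z_1|^{-1}$ in the regime $|z_2|\ll|z_1|$, which is immediate since then $|\phi'(x)|\gtrsim|z_1|$ on $[0,1]$; combining the two yields $|\widehat{\mu_p}(z)|\lesssim|z|^{-1/2}$ as required.
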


\begin{proof}
Noting
\[
\widehat{\mu_p}(z) = \int_0^1 e^{-2\pi i z \cdot (x, x^p)} \, dx,
\]
using polar coordinates $z = re^{i \alpha}$
\[
\mathcal{J}_{s,\theta}(\mu_p)^{1/\theta} =\int_{r=0}^\infty r^{s/\theta-1} \int_{\alpha=0}^{2\pi} \left\lvert \widehat{\mu_p}(z) \right\rvert ^{2/\theta} \, d\alpha \, d r .
\]
We split this integral up into three regions, which are handled separately. Define $\eps(p) \in (0,\pi/2)$ by  $\tan \eps(p)  = 1/(2p)$. 

First, for $\alpha$ such that $|\cos \alpha | \geq \eps(p)$, it is a simple consequence of van der Corput's lemma (see \cite[Theorem 14.2]{mattila}) that
\[
\left\lvert \widehat{\mu_p}(z) \right\rvert  \lesssim r^{-1/2} = |z|^{-1/2}
\]
and so 
\begin{equation} \label{estxp1}
\int_{r=0}^\infty r^{s/\theta-1} \int_{|\cos \alpha | \geq \eps(p)} \left\lvert \widehat{\mu_p}(z) \right\rvert ^{2/\theta} \, d\alpha \, d r < \infty
\end{equation}
for $s<1$.

Second,  for $z=re^{i\alpha}$ such that $r \geq 10$ and $r^{-(p-1)/p}\leq |\cos \alpha| \leq \eps(p) $, 
\begin{align*}
\left\lvert \widehat{\mu_p}(z) \right\rvert   &=\left\lvert  \int_0^1 e^{-2\pi i(rx\cos\alpha+ rx^p\sin\alpha)} \, dx\right\rvert = \left\lvert  \int_0^1 e^{2\pi  i r|\cos\alpha|\phi(x)} \, dx\right\rvert 
\end{align*}
for 
\[
\phi(x) = - \frac{\cos \alpha}{|\cos \alpha|} x -  \frac{\sin \alpha}{|\cos \alpha|}x^p.
\]
 Since
\[
|\phi'(x)|  \geq 1-p \tan \eps(p)  = 1/2,
\]
van der Corput's lemma (see \cite[Theorem 14.2]{mattila})  gives
\[
\left\lvert \widehat{\mu_p}(z) \right\rvert    \lesssim   \frac{1}{r | \cos \alpha |}.
\]
Therefore,
\begin{align}\label{estxp2}
& \hspace{-10mm} \int_{r=10}^\infty r^{s/\theta-1} \int_{r^{-(p-1)/p}\leq |\cos \alpha| \leq \eps(p)} \left\lvert \widehat{\mu_p}(z) \right\rvert ^{2/\theta} \, d\alpha \, d r  \nonumber \\
&\lesssim  \int_{r=10}^\infty r^{s/\theta-1-2/\theta} \int_{\alpha=0}^{\arccos(r^{-(p-1)/p})} \left(\frac{1}{|\cos \alpha|}\right)^{2/\theta} \, d\alpha \, d r  \nonumber \\
&\lesssim \int_{r=10}^\infty  r^{s/\theta-1-2/\theta} r^{\left(\frac{2}{\theta}-1\right)\left(\frac{p-1}{p}\right)}\, d r  \nonumber \\
&<\infty
\end{align}
provided $s <2/p+\theta (1-1/p)$.

Finally, for   $z=re^{i\alpha}$ such that $r \geq 10$  and $0 \leq |\cos \alpha| \leq r^{-(p-1)/p} $ and  using the substitution $y= rx\cos\alpha+ rx^p\sin\alpha$,
\begin{align*}
\left\lvert \widehat{\mu_p}(z) \right\rvert   &=\left\lvert  \int_0^1 e^{-2\pi i(rx\cos\alpha+ rx^p\sin\alpha)} \, dx\right\rvert \\
 & =   \left\lvert \int_0^{r(\cos\alpha +\sin\alpha)} \frac{e^{-2\pi i y}}{r\cos\alpha+rp x^{p-1}\sin \alpha}  \, dy\right\rvert  \\
 & \lesssim  \left\lvert \int_0^{r|\cos\alpha |^{\frac{p}{p-1}}} \frac{e^{-2\pi i y}}{r\cos\alpha}  \, dy \right\lvert + \left\lvert \int_{r|\cos\alpha|^{\frac{p}{p-1}}}^{r(\cos\alpha +\sin\alpha)} \frac{e^{-2\pi i y}}{rp x^{p-1}\sin \alpha}  \, dy\right\rvert \\
  & \lesssim   \frac{ r|\cos\alpha |^{\frac{p}{p-1}}}{r|\cos\alpha|}  + \frac{1}{r^{1/p}}\left\lvert \int_{ r|\cos\alpha|^{\frac{p}{p-1}}}^{r(\cos\alpha +\sin\alpha)}e^{-2\pi i y} y^{(1-p)/p}  \, dy\right\rvert \\
  & \lesssim   |\cos\alpha |^{{\frac{1}{p-1}}}  + \frac{1}{r^{1/p}}  \\
  & \lesssim   \frac{1}{r^{1/p}} 
\end{align*}
Therefore,
\begin{align}\label{estxp3}
& \hspace{-10mm} \int_{r=10}^\infty r^{s/\theta-1} \int_{0 \leq |\cos \alpha| \leq r^{-(p-1)/p}} \left\lvert \widehat{\mu_p}(z) \right\rvert ^{2/\theta} \, d\alpha \, d r  \nonumber \\
&\lesssim  \int_{r=10}^\infty r^{s/\theta-1} \int_{\alpha=\arccos(r^{-(p-1)/p})}^{\pi/2}  \left( \frac{1}{r^{1/p}} \right)^{2/\theta} \, d\alpha \, d r  \nonumber \\
&\lesssim \int_{r=10}^\infty r^{s/\theta-1-\frac{2}{\theta p}} r^{-(p-1)/p} \, d r  \nonumber \\
&<\infty
\end{align}
provided $s <2/p+\theta (1-1/p)$.  Together, \eqref{estxp1}, \eqref{estxp2} and \eqref{estxp3} establish the desired lower bound noting that the integral over $|z| \leq 10$ is trivially finite.

For the upper bound, let $1 >q>(p-1)/p$.  Then, similar to above, for $z$ such that $0 \leq |\cos \alpha| \leq r^{-q} $, 
\begin{align*}
\left\lvert \widehat{\mu_p}(z) \right\rvert    & =   \left\lvert \int_0^{r(\cos\alpha +\sin\alpha)} \frac{e^{-2\pi i y}}{r\cos\alpha+rp x^{p-1}\sin \alpha}  \, dy\right\rvert  \\
 & \geq   \left\lvert \int_{r|\cos\alpha|^{1/q}}^{r(\cos\alpha +\sin\alpha)} \frac{e^{-2\pi i y}}{rp x^{p-1}\sin \alpha}  \, dy\right\rvert \ - \   \left\lvert \int_0^{r|\cos\alpha |^{1/q}} \frac{e^{-2\pi i y}}{r\cos\alpha}  \, dy \right\lvert\\
  & \geq   \frac{1}{r^{1/p}}\left\lvert \int_{ r|\cos\alpha|^{1/q}}^{r(\cos\alpha +\sin\alpha)}e^{-2\pi i y} y^{(1-p)/p}  \, dy\right\rvert \ - \   \frac{ r|\cos\alpha |^{1/q}}{r|\cos\alpha|}  \\
  & \gtrsim   \frac{1}{r^{1/p}} -  |\cos\alpha |^{{1/q-1}}    \\
  & \geq  \frac{1}{2 r^{1/p}}  
\end{align*}
for $r \geq r_0$ for some constant $r_0 \approx_q 1$.  Therefore,
\begin{align*}
& \hspace{-10mm} \int_{r=0}^\infty r^{s/\theta-1} \int_{0 \leq |\cos \alpha| \leq r^{-(p-1)/p}} \left\lvert \widehat{\mu_p}(z) \right\rvert ^{2/\theta} \, d\alpha \, d r  \nonumber \\
&\gtrsim  \int_{r=r_0}^\infty r^{s/\theta-1} \int_{\alpha=\arccos(r^{-q})}^{\pi/2}  \left( \frac{1}{r^{1/p}} \right)^{2/\theta} \, d\alpha \, d r  \nonumber \\
&\gtrsim \int_{r=r_0}^\infty r^{s/\theta-1-\frac{2}{\theta p}} r^{-q} \, d r  \nonumber \\
&=\infty
\end{align*}
provided $s \geq 2/p+\theta q$. Combined with the trivial fact that $\fs \mu_p \leq \sd \mu_p \leq 1$, this proves the desired upper bound by letting $q \to (p-1)/p$.
\end{proof}


\section{Convolutions and sumsets}

Given sets $X, Y \subseteq \rd$, the \emph{sumset} of $X$ and $Y$  is 
\[
X+Y = \{ x+y : x \in X, y \in Y\} \subseteq \rd.
\]
Such sets arise naturally in many contexts and a question of particular interest in additive combinatorics is to understand, for example,  how the `size' of $X+X$ is related to the `size' of $X$.  There is interest in determining conditions which ensure   $\hd (X+X) > \hd X$ or perhaps $\hd (kX) \to d$ as $k \to \infty$ where $kX$ is the $k$-fold sumset for integers $k \geq 1$, see for example \cite{linden}.  Convolutions are natural measures to consider in this context: if  $\mu$ and $\nu$ are measures on $X$ and $Y$, respectively, the convolution $\mu*\nu$ is supported on the sumset $X+Y$.   It turns out that the Fourier spectrum precisely characterises when the Sobolev dimension increases under convolution, see Corollary \ref{conv2}, and gives many partial results about the dimensions of sumsets and convolutions more generally.

\subsection{Convolutions}

First we give some general lower bounds for the Fourier spectrum of a convolution. The  special case when $d = \theta= \lambda= 1$,  $\sd \mu = 1$ and $\fd \nu>0$ is essentially \cite[Lemma 2.1 (1)]{shmerkin}. 

\begin{thm} \label{convolution}
Let  $\mu$ and $\nu$ be finite Borel measures on $\mathbb{R}^d$.  Then for all $s,t \geq 0$  
\[
\mathcal{J}_{s+t,\theta}(\mu * \nu) \lesssim \inf_{\lambda \in [0,1]} \mathcal{J}_{s,\lambda\theta}(\mu ) \mathcal{J}_{t,(1-\lambda)\theta}(\nu ).
\]
In particular, 
\[
\fs (\mu * \nu) \geq \sup_{\lambda \in [0,1]} \left( \dim_\mathrm{F}^{\lambda \theta } \mu +  \dim_\mathrm{F}^{(1-\lambda)\theta}  \nu \right).
\]
\end{thm}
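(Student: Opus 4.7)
The plan is to reduce everything to a single application of Hölder's inequality on the Fourier side, using the convolution formula $\widehat{\mu * \nu} = \widehat{\mu}\,\widehat{\nu}$. Writing out
\[
\mathcal{J}_{s+t,\theta}(\mu * \nu)^{1/\theta} = \int_{\rd} |\widehat \mu(z)|^{2/\theta} |\widehat \nu(z)|^{2/\theta} |z|^{(s+t)/\theta - d}\, dz,
\]
I would split the weight $|z|^{(s+t)/\theta - d}$ as $|z|^{s/\theta - \lambda d} \cdot |z|^{t/\theta - (1-\lambda)d}$ and group each piece of the weight with the corresponding power of $|\widehat \mu|$ or $|\widehat \nu|$. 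That way the integrand is a product whose first factor depends only on $\mu$ and whose second depends only on $\nu$.

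With this factorisation, for $\lambda \in (0,1)$ I apply Hölder's inequality with conjugate exponents $1/\lambda$ and $1/(1-\lambda)$. The first resulting factor is exactly
\[
\left( \int |\widehat \mu(z)|^{2/(\lambda\theta)} |z|^{s/(\lambda\theta) - d}\, dz \right)^{\lambda} = \mathcal{J}_{s,\lambda\theta}(\mu)^{1/\theta},
\]
and similarly the second factor is $\mathcal{J}_{t,(1-\lambda)\theta}(\nu)^{1/\theta}$. Raising both sides to the power $\theta$ gives the desired inequality for every $\lambda \in (0,1)$, and taking the infimum completes the interior case.

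The endpoints $\lambda = 0$ and $\lambda = 1$ need to be handled by hand, but they are straightforward. For $\lambda = 0$, I would just pull out the pointwise bound $|\widehat \mu(z)|^{2/\theta}|z|^{s/\theta} \leq \mathcal{J}_{s,0}(\mu)^{1/\theta}$ from under the integral, leaving $\mathcal{J}_{t,\theta}(\nu)^{1/\theta}$ behind. This is consistent with the supremum convention defining $\mathcal{J}_{s,0}$, and the case $\lambda=1$ is symmetric. None of this introduces hidden constants, so in fact the inequality holds with an absolute constant (possibly even $1$), and the $\lesssim$ in the statement absorbs whatever dependencies one may want to track at the endpoints.

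The corollary is then immediate: for any $\lambda\in[0,1]$, if $s < \dim^{\lambda\theta}_{\mathrm F}\mu$ and $t<\dim^{(1-\lambda)\theta}_{\mathrm F}\nu$, both energies on the right are finite, hence so is $\mathcal{J}_{s+t,\theta}(\mu*\nu)$; taking suprema over admissible $s,t$ and then over $\lambda$ yields the claim. I do not expect a real obstacle here — the whole argument is just bookkeeping around Hölder's inequality — the only point requiring care is matching the exponents of $|z|$ in the split so that the two factors reconstitute exactly $\mathcal{J}_{s,\lambda\theta}(\mu)$ and $\mathcal{J}_{t,(1-\lambda)\theta}(\nu)$.
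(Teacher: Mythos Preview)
Your proposal is correct and follows essentially the same route as the paper: the convolution formula followed by H\"older's inequality with conjugate exponents $1/\lambda$ and $1/(1-\lambda)$. The only cosmetic difference is that the paper rewrites the energy against the auxiliary measure $dm_d = \min\{|z|^{-d},1\}\,dz$ so that the weight factors as $|z|^{s/\theta}\cdot|z|^{t/\theta}$ (at the cost of an $\approx$), whereas you split $|z|^{(s+t)/\theta-d} = |z|^{s/\theta-\lambda d}\cdot|z|^{t/\theta-(1-\lambda)d}$ directly with respect to Lebesgue measure and obtain the inequality with constant~$1$; the endpoint cases and the deduction of the dimension bound are handled identically.
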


\begin{proof}
Let $\lambda \in [0,1]$ and   $s,t > 0$.  Let  $p=1/\lambda \in [1, \infty]$ and $q=1/(1-\lambda) \in [1,\infty]$ be H\"older conjugates and  write $d m_d =\min\{|z|^{-d}, 1\} \, dz$.  Then
\begin{align*}
\mathcal{J}_{s+t,\theta}(\mu * \nu)^{1/\theta} & \approx \int_{\mathbb{R}^d} |\widehat{\mu * \nu}(z)  |^{2/\theta} |z|^{(s+t)/\theta} \, d m_d(z) \\
& = \int_{\mathbb{R}^d} |\widehat{\mu }(z)  |^{2/\theta} |z|^{s/\theta} \, |\widehat{ \nu}(z)  |^{2/\theta} |z|^{t/\theta}  \,  d m_d(z)  \qquad \text{(by convolution formula)} \\
& \leq \left( \int_{\mathbb{R}^d} |\widehat{\mu }(z)  |^{2p/\theta} |z|^{ps/\theta}  \,  d m_d(z) \right)^{1/p} \left( \int_{\mathbb{R}^d}  |\widehat{ \nu}(z)  |^{2q/\theta} |z|^{qt/\theta}  \,  d m_d(z)\right)^{1/q}  \\
&\hspace{5cm} \text{(by H\"older's inequality)}\\
&\approx \mathcal{J}_{s,\lambda\theta}(\mu )^{1/\theta}\mathcal{J}_{t,(1-\lambda)\theta}(\nu )^{1/\theta}  .
\end{align*}
This establishes the first claim.  It follows that for $\lambda \in [0,1]$ and all  $s < \dim_\mathrm{F}^{\lambda\theta} \mu$ and $t < \dim_\mathrm{F}^{(1-\lambda)\theta}  \nu$, $\fs (\mu * \nu) \geq s+t$, which proves second claim.
\end{proof}

One is often interested in how dimension grows (or how smoothness increases) under iterated convolution. For integers $k \geq 1$, we write $\mu^{*k}$ for the $k$-fold convolution of $\mu$ and $kX$ for the $k$-fold sumset.  The following is a generalisation of the simple fact that $\fd (\mu^{*k}) = k \fd \mu$.

\begin{lma} \label{conv1}
Let  $\mu$ be a finite  Borel measure on $\mathbb{R}^d$.  Then 
\[
\fs (\mu^{*k}) =   k \dim_\mathrm{F}^{\theta/k} \mu
\]
for all $\theta \in [0,1]$ and all integers $k \geq 1$.  In particular,
\[
\sd (\mu^{*k}) =   k \dim_\mathrm{F}^{1/k} \mu
\]
and, provided $\fs \mu$ is continuous at $\theta=0$,
\[
\fd \mu = \lim_{k \to \infty} \frac{\sd (\mu^{*k})}{k}.
\]
\end{lma}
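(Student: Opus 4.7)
The plan is to reduce the entire statement to the convolution formula $\widehat{\mu^{*k}}(z) = \widehat{\mu}(z)^k$ followed by a single rescaling of exponents inside the defining energy. For $\theta \in (0,1]$, substituting the convolution formula into the definition gives
\[
\mathcal{J}_{s,\theta}(\mu^{*k})^{1/\theta} = \int_{\rd} |\widehat{\mu}(z)|^{2k/\theta} |z|^{s/\theta - d} \, dz.
\]
Unfolding the definition of $\mathcal{J}_{t, \theta/k}(\mu)$ and raising it to the power $k/\theta$ yields
\[
\mathcal{J}_{t,\theta/k}(\mu)^{k/\theta} = \int_{\rd} |\widehat{\mu}(z)|^{2k/\theta} |z|^{tk/\theta - d} \, dz.
\]
Setting $t = s/k$ matches the two integrals exactly, so $\mathcal{J}_{s,\theta}(\mu^{*k})$ and $\mathcal{J}_{s/k,\theta/k}(\mu)$ are finite together. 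Taking the supremum over $s \geq 0$ then gives the claimed identity $\fs(\mu^{*k}) = k \dim_\mathrm{F}^{\theta/k} \mu$.

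The endpoint $\theta = 0$ must be handled separately because $\mathcal{J}_{s,0}$ is defined as a supremum rather than an integral, but it is even simpler: from $|\widehat{\mu^{*k}}(z)| = |\widehat{\mu}(z)|^k$ one reads off that $|\widehat{\mu^{*k}}(z)| \lesssim |z|^{-s/2}$ if and only if $|\widehat{\mu}(z)| \lesssim |z|^{-s/(2k)}$, giving $\dim_\mathrm{F}^0(\mu^{*k}) = k\,\fd \mu$, consistent with the formula.

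The Sobolev dimension identity is just the specialisation $\theta = 1$. For the last equality I invoke the hypothesised continuity of $\fs \mu$ at $\theta = 0$ to pass the limit through:
\[
\lim_{k \to \infty} \frac{\sd(\mu^{*k})}{k} = \lim_{k \to \infty} \dim_\mathrm{F}^{1/k} \mu = \dim_\mathrm{F}^0 \mu = \fd \mu.
\]

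I do not anticipate any genuine obstacle here: the lemma is essentially a one-line computation once the convolution formula is applied to the energy integral, with only the mild care required to treat the $\theta = 0$ endpoint and to justify the final limit via the continuity hypothesis explicitly assumed in the statement.
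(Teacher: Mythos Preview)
Your proof is correct and follows essentially the same route as the paper: apply the convolution formula $\widehat{\mu^{*k}} = \widehat{\mu}^k$ inside the energy integral and rescale the exponents to identify $\mathcal{J}_{s,\theta}(\mu^{*k})$ with $\mathcal{J}_{s/k,\theta/k}(\mu)$. The paper compresses this into a single displayed equality and leaves the $\theta=0$ endpoint and the limit argument implicit, whereas you spell them out, but there is no substantive difference.
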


\begin{proof}
By the convolution formula
\[
 \int_{\mathbb{R}^d} \lvert\widehat{\mu^{*k}}(z)  \rvert^{2/\theta} |z|^{s/\theta-d} \, dz =  \int_{\mathbb{R}^d} |\widehat{\mu}(z)  |^{2/(\theta/k)} |z|^{(s/k)/(\theta/k)-d} \, dz 
\]
which proves the result.
\end{proof}

It is not possible to express $\fs (\mu * \nu) $ in terms of the Fourier dimension spectra of $\mu$ and $\nu$ in general if $\mu$ and $\nu$ are distinct.  For example, consider $\mu$ and $\nu$ given by 1-dimensional Hausdorff measure restricted to distinct unit line segments in the plane.  Then, by Corollary \ref{hyperplane},   $\fs \mu = \fs \nu = \theta$ for all $\theta$.  However, if the line segments are not contained in a common line, then the convolution $\mu * \nu$ is 2-dimensional Lebesgue measure restricted to a parallelogram and therefore $\fs (\mu * \nu) = 2$ for all $\theta$.  On the other hand, if the line segments are contained in a common line, then  $\fs (\mu * \nu) = \theta$ for all $\theta$.

Lemma \ref{conv1} shows that  the Fourier dimension of a measure can be expressed in terms of the Sobolev dimension of convolutions of the measure with itself.  This may be of use in applications since the Fourier dimension is usually harder to compute than the Sobolev dimension. Recall that continuity of $\fs \mu$ at $\theta=0$ is a very mild assumption and holds, for example, provided $|z|^\alpha \in L^1(\mu)$ for some $\alpha>0$, see Lemma \ref{holder} and Theorem \ref{cty0}.

Lemma \ref{conv1} plus concavity (Theorem \ref{concave})  imply that the Fourier spectrum cannot decrease under convolution.  However, we can say much more and in fact the Fourier spectrum necessarily  increases unless it has a very restricted form.  We also get a precise characterisation of when the Sobolev dimension increases under convolution.  

\begin{cor} \label{conv2}
Let  $\mu$ be a  finite Borel measure on $\mathbb{R}^d$ and $\theta \in (0,1]$.  Then
\[
\fs(\mu * \mu) > \fs \mu
\]
if and only if $\dim_\mathrm{F}^{\lambda \theta} \mu > \lambda  \fs \mu$ for some $\lambda \in [0,1)$. In particular,
\[
\sd(\mu * \mu) > \sd \mu
\]
 if and only if  $\dim_\mathrm{F}^{\lambda} \mu > \lambda \sd \mu$ for some $\lambda \in [0,1)$.
\end{cor}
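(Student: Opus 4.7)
The plan is to reduce the corollary to an elementary convexity fact about the function $g \colon [0,1] \to [0,\infty)$ defined by $g(\psi) := \dim_\mathrm{F}^{\psi} \mu$, which is non-decreasing and concave on $[0,1]$ by Theorem \ref{concave}. The crucial input is Lemma \ref{conv1} with $k=2$, which gives the equality $\fs(\mu * \mu) = 2\,g(\theta/2)$. Rewriting in terms of $g$, the inequality $\fs(\mu * \mu) > \fs \mu$ becomes $2g(\theta/2) > g(\theta)$, while the hypothesis on the right-hand side becomes: there exists $\lambda \in [0,1)$ with $g(\lambda\theta) > \lambda\, g(\theta)$.

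The forward implication is immediate by taking $\lambda = 1/2$. For the reverse implication I would work with the auxiliary function $h(\psi) := g(\psi) - (g(\theta)/\theta)\,\psi$ on $[0,\theta]$, which inherits concavity from $g$ and satisfies $h(0) = g(0) \geq 0$ and $h(\theta) = 0$. The hypothesis then reads $h(\lambda_0\theta) > 0$ for some $\lambda_0 \in [0,1)$, and the goal is $h(\theta/2) > 0$. A short case analysis via concavity does the job: if $\lambda_0 \leq 1/2$, writing $\theta/2$ as a convex combination of $\lambda_0\theta$ and $\theta$ yields
\[
h(\theta/2) \geq \frac{1}{2(1-\lambda_0)}\, h(\lambda_0\theta) > 0,
\]
whereas if $\lambda_0 > 1/2$, writing $\theta/2$ as a convex combination of $0$ and $\lambda_0\theta$ and using $h(0) \geq 0$ gives $h(\theta/2) \geq \frac{1}{2\lambda_0}\, h(\lambda_0\theta) > 0$.

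The \emph{in particular} statement about the Sobolev dimension follows at once by specialising to $\theta = 1$ and recalling that $\dim_\mathrm{F}^1 = \sd$. I do not expect any substantive obstacle: all the analytic content has been packaged into Lemma \ref{conv1} and the concavity from Theorem \ref{concave}, and what remains is only the brief convexity argument above, whose only subtlety is the case split on $\lambda_0$ relative to $1/2$.
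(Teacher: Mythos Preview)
Your proof is correct and follows essentially the same route as the paper: both rely on Lemma \ref{conv1} (with $k=2$) to rewrite $\fs(\mu*\mu)=2\dim_\mathrm{F}^{\theta/2}\mu$ and then invoke the concavity of $\theta\mapsto\fs\mu$ from Theorem \ref{concave}. The only cosmetic differences are that the paper handles the direction ``no $\lambda$ works $\Rightarrow$ no increase'' by contrapositive (if $\dim_\mathrm{F}^{\lambda\theta}\mu=\lambda\fs\mu$ for all $\lambda$ then $\fs(\mu*\mu)=\fs\mu$), whereas you take $\lambda=1/2$ directly, and the paper asserts the key step ``$\dim_\mathrm{F}^{\lambda\theta}\mu>\lambda\fs\mu$ for some $\lambda$ implies $\dim_\mathrm{F}^{\theta/2}\mu>\tfrac12\fs\mu$'' in one line via concavity, while you spell it out with the auxiliary function $h$ and a case split on $\lambda_0\lessgtr 1/2$.
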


\begin{proof}
One direction is Lemma \ref{conv1}.  Indeed, if  $\dim_\mathrm{F}^{\lambda \theta} \mu = \lambda  \fs \mu$ for all $\lambda \in [0,1)$, then $\fs (\mu * \mu) =  2\dim_\mathrm{F}^{\theta/2} \mu = \fs \mu$.  

To prove the other direction, let $\lambda \in [0,1)$ be such that $\dim_\mathrm{F}^{\lambda \theta} \mu > \lambda  \fs \mu$.  By Theorem \ref{concave}, $\fs \mu$ is concave and therefore $ \dim_\mathrm{F}^{\theta/2}  \mu>  (\fs \mu)/2$.  Then, by Lemma \ref{conv1},
\[
\fs (\mu * \mu) =  2\dim_\mathrm{F}^{\theta/2} \mu > \fs \mu.  
\]
The special case concerning  Sobolev dimension is obtained by setting $\theta=1$.
\end{proof}

The previous result characterises when the Sobolev dimension increases under convolution.  In fact, using Lemma \ref{conv1}, the Fourier spectrum also characterises the limiting behaviour of the Sobolev dimension of iterated convolutions.

\begin{cor} \label{iterated}
Let  $\mu$ be a   finite  Borel measure on $\mathbb{R}^d$ such that $\widehat \mu$ is $\alpha$-H\"older.  Then
\[
\lim_{k \to \infty}  \left( \sd (\mu^{*k}) - k \fd \mu \right) =  \partial_+ \fs \mu \vert_{\theta=0}
\]
where   $D = \partial_+ \fs \mu \vert_{\theta=0}$ is the right semi-derivative of $\fs \mu$ at $0$.  Moreover, by Theorem  \ref{cty0}
\[
 \sd \mu- \fd \mu \leq D \leq d\left(1+\frac{\fd \mu}{2\alpha}\right).
\]
In particular, if $\fd \mu>0$, then $\sd (\mu^{*k}) \sim k \fd \mu$ and if $\fd \mu=0$, then $\sd (\mu^{*k})$ is a bounded monotonic  sequence which converges to the right semi-derivative of $\fs \mu$ at $0$.
\end{cor}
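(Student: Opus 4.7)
The plan is to read the statement directly off Lemma~\ref{conv1}, which yields the clean identity
\[
\sd(\mu^{*k}) = k\,\dim_{\mathrm{F}}^{1/k}\mu.
\]
Since $\dim_{\mathrm{F}}^{0}\mu = \fd\mu$ by definition, this rewrites as
\[
\sd(\mu^{*k}) - k\fd\mu \;=\; \frac{\dim_{\mathrm{F}}^{1/k}\mu - \dim_{\mathrm{F}}^{0}\mu}{1/k},
\]
i.e.\ exactly the forward difference quotient of $\theta\mapsto \fs\mu$ at $\theta=0$ with step $1/k$. The H\"older hypothesis on $\widehat\mu$ enters precisely here in order to invoke Theorem~\ref{cty0}: it guarantees that $\fs\mu$ is continuous (in fact Lipschitz) at $\theta=0$, so $\lim_{\theta\to 0^+}\fs\mu = \fd\mu$ and the difference quotient genuinely captures the right-hand behaviour of $\fs\mu$ at the origin.

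Next I would extract the limit from concavity. By Theorem~\ref{concave} $\fs\mu$ is concave on $[0,1]$, and it is a standard fact about concave functions that the chord slopes $(g(\theta)-g(0))/\theta$ are non-increasing in $\theta$. Hence the sequence $\sd(\mu^{*k}) - k\fd\mu$ is non-decreasing in $k$ and converges to its supremum, which is by definition the right semi-derivative $D = \partial_+\fs\mu\vert_{\theta=0}$. This simultaneously proves existence of the limit, the stated equality, and the monotonicity appearing in the final sentence. The two-sided bound on $D$ is then a direct read-off: Theorem~\ref{cty0} bounds each difference quotient above by $d(1+\fd\mu/(2\alpha))$, and the linear lower bound $\fs\mu \geq \fd\mu + \theta(\sd\mu-\fd\mu)$ from Corollary~\ref{bounds} gives $D \geq \sd\mu - \fd\mu$.

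Finally, the two asymptotic regimes are a matter of unpacking. When $\fd\mu>0$, the estimate $\sd(\mu^{*k}) = k\fd\mu + O(1)$ immediately yields $\sd(\mu^{*k}) \sim k\fd\mu$. When $\fd\mu=0$, the identity $\sd(\mu^{*k}) = k\dim_{\mathrm{F}}^{1/k}\mu$ coincides with the difference quotient itself, so the whole sequence $\sd(\mu^{*k})$ is the bounded (by $D \leq d$), non-decreasing sequence just analysed, converging to $D$. The only step demanding any care is the appeal to continuity at $\theta=0$: without the H\"older hypothesis, Proposition~\ref{discont} shows $\fs\mu$ can jump at $0$, so $k\dim_{\mathrm{F}}^{1/k}\mu$ would tend to $\lim_{\theta\to 0^+}\fs\mu$ rather than to a right semi-derivative based at $\fd\mu$, and the identification would fail. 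Theorem~\ref{cty0} is exactly what rules this out.
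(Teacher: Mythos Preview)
Your proposal is correct and is exactly the argument the paper intends: the corollary is stated without proof, and your derivation via Lemma~\ref{conv1} (giving $\sd(\mu^{*k})-k\fd\mu$ as the difference quotient of $\fs\mu$ at $0$), concavity from Theorem~\ref{concave} (monotonicity and existence of the limit as the right semi-derivative), and the bounds from Theorem~\ref{cty0} and Corollary~\ref{bounds} is precisely the intended route.
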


\subsection{Sumsets and iterated sumsets}

Next we give sufficient conditions for the Hausdorff dimension of $X+Y$ to exceed the Hausdorff dimension of $Y$.

\begin{cor} \label{X+Y}
Let $X, Y \subseteq \mathbb{R}^d$ be non-empty sets with $Y$ Borel.  If  
\[
\lambda \hd Y< \dim_\mathrm{F}^{\lambda} X \leq d- (1-\lambda) \hd Y
\]
 for some $\lambda \in [0,1)$, then
\[
\hd(X+Y) \geq \hd Y +(\dim_\mathrm{F}^{\lambda} X-\lambda \hd Y) > \hd Y.
\]
If
\[
\dim_\mathrm{F}^{\lambda} X > d- (1-\lambda) \hd Y
\]
 for some $\lambda \in [0,1)$, then $X+Y$ has positive $d$-dimensional Lebesgue measure and if $X$ supports a measure $\mu$ with
\[
\dim_\mathrm{F}^{\lambda} \mu > 2d- (1-\lambda) \hd Y
\]
 for some $\lambda \in [0,1)$, then $X+Y$ has non-empty interior.
\end{cor}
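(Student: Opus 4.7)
The unifying strategy is to produce, for a fixed $\lambda\in[0,1)$ witnessing the hypothesis, finite Borel measures $\mu$ on $X$ and $\nu$ on $Y$ whose convolution is supported on $X+Y$, and whose Sobolev dimension is controlled from below via Theorem \ref{convolution} at $\theta=1$:
\[
\sd(\mu*\nu) \;\geq\; \dim_\mathrm{F}^\lambda \mu + \dim_\mathrm{F}^{1-\lambda}\nu.
\]
For $\mu$ I would invoke the definition of $\dim_\mathrm{F}^\lambda X$ (or take the measure supplied in part (3)) to obtain a measure on $X$ whose Fourier spectrum at $\lambda$ is arbitrarily close to the relevant target. For $\nu$, the potential theoretic characterisation of Hausdorff dimension together with Borelness of $Y$ yields, for any $t<\hd Y$, a compactly supported measure $\nu$ on $Y$ with $\mathcal{I}_t(\nu)<\infty$ and hence $\sd\nu\geq t$; Corollary \ref{bounds} then gives $\dim_\mathrm{F}^{1-\lambda}\nu\geq(1-\lambda)t$. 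The bump-function regularisation used in the proof of Theorem \ref{ctyX} lets me replace $\mu$ by a compactly supported measure without decreasing its Fourier spectrum, so that $\textup{spt}(\mu*\nu)\subseteq\textup{spt}(\mu)+\textup{spt}(\nu)\subseteq X+Y$ literally (not just in the closure).

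For part (1), the upper-end hypothesis $\dim_\mathrm{F}^\lambda X\leq d-(1-\lambda)\hd Y$ keeps the lower bound on $\sd(\mu*\nu)$ inside $(0,d]$, so I would apply the energy equivalence $\mathcal{J}_{s,1}(\mu*\nu)\approx\mathcal{I}_s(\mu*\nu)$ valid for $0<s<d$ to conclude
\[
\hd(X+Y) \;\geq\; \hd\bigl(\textup{spt}(\mu*\nu)\bigr) \;\geq\; \min\{\sd(\mu*\nu),d\} \;\geq\; \dim_\mathrm{F}^\lambda X + (1-\lambda)\hd Y - \varepsilon
\]
for arbitrarily small $\varepsilon>0$, and then let $\varepsilon\to 0$.

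For parts (2) and (3) I would push the same construction into the smoothness regimes recalled in Corollary \ref{sobolev}. Under the hypothesis of part (2) the parameters can be tuned so that $\sd(\mu*\nu)>d$, forcing $\mu*\nu\in L^2(\mathbb{R}^d)$; an $L^2$ density supported inside the compact set $\textup{spt}(\mu)+\textup{spt}(\nu)\subseteq X+Y$ then forces $X+Y$ to have positive Lebesgue measure. Under the hypothesis of part (3), taking $\mu$ to be the measure provided yields $\sd(\mu*\nu)>2d$, so $\mu*\nu$ is a continuous nonzero function, which is therefore strictly positive on a nonempty open subset of $X+Y$.

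The main obstacles are minor and essentially bookkeeping: ensuring that $\mu$ is compactly supported so that $\textup{spt}(\mu*\nu)\subseteq X+Y$ holds without passing to closures (handled by the regularisation trick of Theorem \ref{ctyX}); and carefully interpreting the $\min\{\cdot,d\}$ in the set-level definition of $\dim_\mathrm{F}^\lambda X$ when the supremum equals $d$, which is resolved by noting that any measure approaching that supremum automatically has $\dim_\mathrm{F}^\lambda\mu\geq d$ and so exceeds $d-(1-\lambda)\hd Y$ as required.
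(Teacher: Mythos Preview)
Your proposal is correct and follows essentially the same route as the paper: choose measures $\mu$ on $X$ and $\nu$ on $Y$ with $\dim_\mathrm{F}^\lambda\mu$ and $\sd\nu$ close to $\dim_\mathrm{F}^\lambda X$ and $\hd Y$ respectively, use concavity (your Corollary~\ref{bounds}) to get $\dim_\mathrm{F}^{1-\lambda}\nu\geq(1-\lambda)\sd\nu$, and apply Theorem~\ref{convolution} at $\theta=1$ to bound $\sd(\mu*\nu)$ from below, then read off the three conclusions from the Sobolev-dimension thresholds. You are slightly more explicit than the paper about ensuring compact supports (via the regularisation from Theorem~\ref{ctyX}) so that $\textup{spt}(\mu*\nu)\subseteq X+Y$ holds literally, and about the $\min\{\cdot,d\}$ in the definition of $\dim_\mathrm{F}^\lambda X$; the paper glosses over both points but the fixes are exactly as you describe.
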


\begin{proof}
Let $\eps>0$.  By definition of $\fs X$, there exists  a finite Borel measure $\mu$ supported on a closed set $X_0 \subseteq X$ with $\dim_\mathrm{F}^{\lambda} \mu \geq  \dim_\mathrm{F}^{\lambda} X -\eps$.  Moreover, there exists a finite Borel measure $\nu$ supported on a closed set $Y_0 \subseteq Y$ with $\sd \nu \geq  \hd Y - \eps$ and therefore, by concavity of the Fourier spectrum for measures, $\dim_\mathrm{F}^{\theta} \nu \geq \theta (\hd Y-  \eps )$ for all $\theta \in (0,1)$.  Then $\mu * \nu $ is  supported on $X_0+Y_0 \subseteq X+Y$ and, by Theorem \ref{convolution},
\begin{align*}
\sd (\mu * \nu ) &\geq \dim_\mathrm{F}^\lambda \mu + \dim_\mathrm{F}^{(1-\lambda)} \nu \\
& \geq  \dim_\mathrm{F}^{\lambda} X  -\eps   + (1-\lambda)\left(\hd Y-\eps \right) \\
 &=  \hd Y +(\dim_\mathrm{F}^{\lambda} X-\lambda \hd Y)   -\eps(2-\lambda) 
\end{align*}
which proves the first two claims upon letting $\eps \to 0$.  The final claim (giving non-empty interior) is proved similarly by establishing that $\sd (\mu * \nu )  > 2d$.  We omit the details. 
\end{proof}

If $X$ and $Y$ coincide, then the above result simplifies and we get a succinct  sufficient condition for dimension increase under addition.

\begin{cor} \label{sumsetcharacter}
Let $X \subseteq \mathbb{R}^d$ be a Borel set with $\hd X < d$. If   $\dim_\mathrm{F}^{\lambda} X > \lambda \hd X$ for some $\lambda \in [0,1)$, then
\[
\hd(X+X) > \hd X.
\]
\end{cor}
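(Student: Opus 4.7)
The plan is to obtain Corollary \ref{sumsetcharacter} as a direct consequence of Corollary \ref{X+Y} applied with $Y = X$, splitting into two cases depending on the size of $\dim_{\mathrm F}^\lambda X$ relative to the threshold $d - (1-\lambda)\hd X$.

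First I would fix the value of $\lambda \in [0,1)$ provided by the hypothesis, so that $\dim_{\mathrm F}^\lambda X > \lambda \hd X$, and ask whether $\dim_{\mathrm F}^\lambda X \leq d - (1-\lambda)\hd X$ or not. In the first (subcritical) case, both inequalities needed in the first clause of Corollary \ref{X+Y} hold with $Y = X$, so that clause applies and delivers the quantitative bound
\[
\hd(X+X) \;\geq\; \hd X + \bigl(\dim_{\mathrm F}^\lambda X - \lambda \hd X\bigr) \;>\; \hd X,
\]
where strict inequality uses the given gap $\dim_{\mathrm F}^\lambda X > \lambda \hd X$.

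In the remaining (supercritical) case $\dim_{\mathrm F}^\lambda X > d - (1-\lambda)\hd X$, the second clause of Corollary \ref{X+Y} applies and yields that $X+X$ has positive $d$-dimensional Lebesgue measure. Hence $\hd(X+X) = d$, and since $\hd X < d$ by hypothesis, we again conclude $\hd(X+X) > \hd X$.

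There is no real obstacle here: the content has already been done in Corollary \ref{X+Y}, and the only thing to verify carefully is that the strict Lebesgue-measure conclusion really beats $\hd X$, which is where the hypothesis $\hd X < d$ is used (and which is why it must be assumed in the statement, since otherwise the conclusion $\hd(X+X) > \hd X$ would fail trivially). The proof is thus a two-line case analysis, and I would present it as such.
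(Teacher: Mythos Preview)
Your proof is correct and follows exactly the approach the paper intends: Corollary \ref{sumsetcharacter} is stated immediately after Corollary \ref{X+Y} without an explicit proof, with the remark that setting $Y=X$ gives the simplification, and your case split on whether $\dim_\mathrm{F}^\lambda X$ lies below or above the threshold $d-(1-\lambda)\hd X$ is precisely how one reads off the conclusion from the two clauses of Corollary \ref{X+Y}.
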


We note that  $\fs X$ does not characterise precisely  when $\hd(X+X) > \hd X$ (compare with Corollary \ref{conv2}). For example, let $X \subseteq \mathbb{R}^2$ be  the union of two unit line segments.  Then $\fs X = \theta$.  However, if the two line segments lie in a common line, then  $\hd (X+X) = \hd X = 1$, but if the two line segments do not lie in a common line, then $X+X$ has non-empty interior.

Finally, we consider dimension growth of the iterated sumset $kX$.  If $\fd X >0$, then  $kX$ will have non-empty interior for finite explicit $k$.  Therefore in the following we restrict to sets with $\fd X = 0$.  The Fourier spectrum gives a lower bound for the dimension growth in terms of the right semi-derivative at 0.
\begin{cor}
Let $X \subseteq \mathbb{R}^d$ be a non-empty set with $\fd X = 0$.    Then
\[
\lim_{k \to \infty} \hd (kX) \geq \sup_{\theta \in (0,1)} \frac{\fs X}{\theta} = \partial_+ \fs X \vert_{\theta=0}.
\]
\end{cor}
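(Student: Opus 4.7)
The plan is to transfer the Fourier spectrum of a carefully chosen measure supported on $X$ to a lower bound for the Sobolev dimension of its iterated convolutions via Lemma \ref{conv1}, and then invoke the standard energy implication for Hausdorff dimension. First I would fix $\theta\in(0,1)$ and $\eps>0$, and by the definition of $\fs X$ (combined with the compact-support smoothing trick used in the proof of Theorem \ref{ctyX}) pick a finite Borel measure $\mu$ of compact support $X_0\subseteq X$ with $\min\{\fs\mu,d\}\geq \fs X-\eps$. The hypothesis $\fd X=0$ together with $d\geq 1$ forces $\fd\mu=0$, i.e.\ $\dim_{\mathrm{F}}^{0}\mu=0$.

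The key step is an elementary chord-slope estimate: by Theorem \ref{concave} the function $\theta'\mapsto \dim_{\mathrm{F}}^{\theta'}\mu$ is concave on $[0,1]$ and vanishes at $\theta'=0$, so $\theta'\mapsto \dim_{\mathrm{F}}^{\theta'}\mu/\theta'$ is non-increasing on $(0,1]$. Consequently, for every integer $k>1/\theta$,
\[
k\,\dim_{\mathrm{F}}^{1/k}\mu \;\geq\; \frac{\fs\mu}{\theta} \;\geq\; \frac{\fs X-\eps}{\theta}.
\]
Applying Lemma \ref{conv1} rewrites the left-hand side as $\sd(\mu^{*k})$, and $\mu^{*k}$ is supported on the compact set $kX_0\subseteq kX$. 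The standard energy implication (``$\mathcal{J}_{s,1}(\nu)<\infty$ for $s\in(0,d)$ forces $\hd\,\textup{spt}(\nu)\geq s$'') then yields
\[
\hd(kX)\;\geq\;\min\{\sd(\mu^{*k}),d\}\;\geq\;\min\Bigl\{\tfrac{\fs X-\eps}{\theta},\,d\Bigr\}.
\]
By Theorem \ref{ctyX} and $\fd X=0$ we have $\fs X\leq d\theta$, so the outer $\min$ is redundant. Letting $\eps\to 0$ and taking the supremum over $\theta\in(0,1)$, together with the observation that $\hd(kX)$ is non-decreasing in $k$ (since $(k+1)X$ contains a translate of $kX$, so the limit exists), gives the desired inequality.

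For the identification with $\partial_+\fs X\vert_{\theta=0}$ I would lift the chord argument to the set level: for each $\mu$ as above and any $\theta'<\theta$, concavity gives $\dim_{\mathrm{F}}^{\theta'}\mu\geq \theta'\fs\mu/\theta\geq \theta'(\fs X-\eps)/\theta$, and since $\theta'(\fs X-\eps)/\theta\leq d$ the definition of $\fs X$ upgrades this to $\dim_{\mathrm{F}}^{\theta'}X\geq \theta'(\fs X-\eps)/\theta$. Dividing by $\theta'$, taking $\liminf_{\theta'\to 0^+}$, then $\sup_\theta$ and $\eps\to 0$, gives $\liminf_{\theta'\to 0^+}\dim_{\mathrm{F}}^{\theta'}X/\theta'\geq \sup_{\theta\in(0,1)}\fs X/\theta$; the reverse inequality is automatic, so $\partial_+\fs X\vert_{\theta=0}$ exists and equals the supremum. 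The main bookkeeping obstacle is managing the $\min\{\cdot,d\}$ caps through the chain of inequalities, handled by the bound $\fs X\leq d\theta$ just noted; the essential mechanism is simply the monotonicity of chord slopes at the left endpoint.
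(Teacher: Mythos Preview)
Your argument is correct and follows essentially the same route as the paper: choose a measure on $X$ nearly realising $\fs X$, use concavity of the measure spectrum through the origin to compare $k\,\dim_{\mathrm F}^{1/k}\mu$ with $\fs\mu/\theta$, and invoke Lemma \ref{conv1} together with the energy bound for Hausdorff dimension. You are more careful than the paper about the $\min\{\cdot,d\}$ cap (handled via $\fs X\leq d\theta$ from Theorem \ref{ctyX}) and you also supply an explicit justification for the identity $\sup_{\theta\in(0,1)}\fs X/\theta=\partial_+\fs X|_{\theta=0}$, which the paper simply asserts; both are genuine additions. The compact-support reduction via the smoothing trick is not strictly needed here but does no harm.
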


\begin{proof}
Fix $\theta \in (0,1)$, let $\eps>0$  and let $\mu$ be a finite Borel measure on $X$ with $\fs \mu > \fs X-\theta\eps$.  Then,  for integers $k \geq 1/\theta$, by Lemma \ref{conv1} and concavity of $\fs \mu$ (Theorem \ref{concave}),
\[
\hd (kX) \geq \sd (\mu^{*k}) =   k \dim_\mathrm{F}^{1/k} \mu \geq \frac{ \dim_\mathrm{F}^{\theta} \mu}{\theta} > \frac{ \fs X}{\theta} -\eps.
\]
Since this holds for all $\eps>0$ and all $\theta \in (0,1)$ and all sufficiently large $k$, the result follows.
\end{proof}

\subsection{Measures and sets which improve dimension}

Motivated by  the well-known problem of Stein to classify measures which are $L^p$-improving, one can ask when a measure $\mu$ will increase the dimension of all measures $\nu$ from some class under convolution simultaneously.  Theorem \ref{convolution} clearly gives some information about this.  We state one version, leaving further analysis to the reader.   See \cite{rossi} for a related question. The main interest here is in measures with Fourier dimension 0 since if $\mu$ has positive Fourier dimension then it increases the Sobolev dimension of all measures.

\begin{cor} \label{sobolevimproving}
Let $\mu$ be a finite Borel measure on $\mathbb{R}^d$ such that
\[
 \sup_{\theta \in (0,1)} \frac{\fs \mu}{\theta} \geq s.
\]
Then $\mu$ is Sobolev improving in the sense that for all finite Borel measures $\nu$ on $\rd$  with $\sd \nu < s$
\[
\sd (\mu * \nu)  > \sd \nu.
\] 
\end{cor}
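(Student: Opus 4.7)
The plan is to derive the corollary as a direct application of Theorem \ref{convolution} at $\theta = 1$, which gives
\[
\sd(\mu * \nu) = \dim_\mathrm{F}^1(\mu * \nu) \geq \sup_{\lambda \in [0,1]} \left( \dim_\mathrm{F}^{\lambda} \mu + \dim_\mathrm{F}^{1-\lambda} \nu \right).
\]
It therefore suffices to exhibit a single $\lambda \in (0,1)$ such that $\dim_\mathrm{F}^{\lambda} \mu + \dim_\mathrm{F}^{1-\lambda} \nu > \sd \nu$. The task then splits into a lower bound on the $\nu$ term that loses no more than a factor $(1-\lambda)$ of $\sd \nu$, and an extraction of enough room from $\mu$ to beat the resulting deficit $\lambda \sd \nu$.

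For the first piece, I would invoke the concavity of $\fs \nu$ from Theorem \ref{concave}. Since $\dim_\mathrm{F}^0 \nu = \fd \nu \geq 0$ and $\dim_\mathrm{F}^1 \nu = \sd \nu$, the chord inequality applied to the endpoints $\theta = 0$ and $\theta = 1$ yields
\[
\dim_\mathrm{F}^{1-\lambda} \nu \geq (1-\lambda) \sd \nu + \lambda \fd \nu \geq (1-\lambda) \sd \nu
\]
for every $\lambda \in [0,1]$, which is the required lower bound.

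For the second piece, I would use the hypothesis to locate a $\lambda$ such that $\dim_\mathrm{F}^{\lambda} \mu > \lambda \sd \nu$. Because $\sd \nu < s \leq \sup_{\theta \in (0,1)} \fs \mu / \theta$, there exists $\theta_0 \in (0,1)$ with $\dim_\mathrm{F}^{\theta_0} \mu / \theta_0 > \sd \nu$. Setting $\lambda = \theta_0$ and combining with the concavity estimate then gives
\[
\sd(\mu * \nu) \geq \dim_\mathrm{F}^{\theta_0} \mu + \dim_\mathrm{F}^{1-\theta_0} \nu > \theta_0 \sd \nu + (1-\theta_0) \sd \nu = \sd \nu,
\]
which is the desired strict increase.

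There is no real obstacle here — the argument is essentially a one-line combination of Theorem \ref{convolution}, concavity, and the hypothesis. The only point that requires care is the strict-versus-non-strict inequality: the assumption leaves open the possibility that the supremum in $\sup_{\theta \in (0,1)} \fs \mu / \theta$ is not attained, so the argument must be phrased in terms of an approximating $\theta_0$, with the strict gap $s - \sd \nu > 0$ supplying the slack that promotes the supremum inequality to a pointwise strict one.
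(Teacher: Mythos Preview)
Your proof is correct and follows essentially the same route as the paper's: both apply Theorem \ref{convolution} at $\theta = 1$, use concavity (Theorem \ref{concave}) to bound $\dim_\mathrm{F}^{1-\theta}\nu \geq (1-\theta)\sd\nu$, and extract a $\theta_0$ from the hypothesis with $\dim_\mathrm{F}^{\theta_0}\mu > \theta_0 \sd\nu$. The paper's version is more terse (introducing an intermediate $t$ with $\sd\nu < t < s$ rather than phrasing things via the supremum directly), but the logic is identical.
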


\begin{proof}
Let $\nu$ be a finite Borel measure with $s> t>\sd \nu$. By assumption there exists  $\theta \in (0,1)$ such that 
\[
\fs \mu \geq t \theta.
\]
Then, by Theorem \ref{convolution}, $\sd (\mu * \nu) \geq  \dim_\mathrm{F}^{  \theta } \mu +  \dim_\mathrm{F}^{ 1-\theta}  \nu   \geq t \theta  + (1-\theta) \sd \nu > \sd \nu.$
\end{proof}

The utility of the previous result is that the Sobolev dimension of $\mu$ itself may be arbitrarily close to 0, that is, much smaller than $s$.  Next we state a version for sumsets, which follows immediately from Corollary \ref{X+Y}.  Again, the Hausdorff dimension of $X$ can be arbitrarily close to 0.

\begin{cor}
Let $X \subseteq \mathbb{R}^d$ be a non-empty set such that
\[
 \sup_{\theta \in (0,1)} \frac{\fs \mu}{\theta} \geq  s.
\]
Then $X$ is Hausdorff  improving in the sense that for all non-empty Borel $Y \subseteq \rd$  with $\hd Y<s$
\[
\hd (X+Y) > \hd Y.
\] 
\end{cor}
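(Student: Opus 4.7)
The plan is to apply Corollary \ref{X+Y} with a well-chosen parameter $\lambda$, exactly as the author suggests; I take the statement to mean $\fs X$ in place of $\fs \mu$. Since by hypothesis $\sup_{\theta \in (0,1)} \fs X/\theta \geq s > \hd Y$, I can select $\lambda \in (0,1)$ such that
\[
\dim_\mathrm{F}^\lambda X > \lambda \hd Y.
\]
This is precisely the lower inequality required to invoke Corollary \ref{X+Y}.

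Next I would split into two cases according to whether the companion upper bound $\dim_\mathrm{F}^\lambda X \leq d-(1-\lambda)\hd Y$ holds. In the first case, the first clause of Corollary \ref{X+Y} applies directly and yields
\[
\hd(X+Y) \geq \hd Y + (\dim_\mathrm{F}^\lambda X - \lambda \hd Y) > \hd Y,
\]
which is exactly what is wanted. In the second case we have $\dim_\mathrm{F}^\lambda X > d-(1-\lambda)\hd Y$, so the second clause of Corollary \ref{X+Y} gives that $X+Y$ has positive $d$-dimensional Lebesgue measure, whence $\hd(X+Y) = d$. Since $\hd Y < d$ is implicit here (the conclusion is otherwise vacuous in $\rd$), we again conclude $\hd(X+Y) > \hd Y$.

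There is no genuine obstacle: the result is a direct consequence of Corollary \ref{X+Y}, in keeping with the author's own remark that it ``follows immediately''. The only minor subtlety is the case dichotomy above, but both branches are furnished verbatim by that corollary, so no new estimation is required and the proof can be written in a few lines.
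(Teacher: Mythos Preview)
Your proposal is correct and matches the paper exactly: the paper gives no proof at all, merely stating that the result ``follows immediately from Corollary~\ref{X+Y}'', and your two-case argument is precisely how one unpacks that remark. Your observation that the hypothesis should read $\fs X$ rather than $\fs \mu$ is also right; the only cosmetic point is that calling the case $\hd Y = d$ ``vacuous'' is slightly imprecise (the conclusion would be false rather than vacuous), but this edge case is excluded in the regime of interest since when $\fd X = 0$ one has $\sup_\theta \fs X/\theta \le d$ by Theorem~\ref{ctyX}, forcing $s\le d$ and hence $\hd Y<d$.
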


\section{Distance sets}

Given a  set $X \subseteq \rd$, the associated  \emph{distance set} is
\[
D(X) = \{ |x-y| : x,y \in X\}.
\]
Following \cite{distance}, there has been a lot of interest in the so-called `distance set problem', which is to relate the size of $D(X)$ with the size of $X$.  For example, it is conjectured that (for $d \geq 2$ and $X$ Borel)  $\hd X >d/2$ guarantees $\mathcal{L}^1(D(X)) >0$ and $\hd X \geq d/2$ guarantees $\hd D(X) = 1$.  Here $\mathcal{L}^1$ is the 1-dimensional Lebesgue measure on $\mathbb{R}$.  Both of these conjectures are open for all $d \geq 2$, despite much  recent interest and progress, e.g. \cite{guth, keletishmerkin,shmerkinwang}.  For example, the measure version of the conjecture holds for   Salem sets, that is, when $\fd X = \hd X >d/2$,  see \cite[Corollary 15.4]{mattila}, and the dimension version holds when $\pd X = \hd X \geq d/2$ \cite{shmerkinwang}. Here $\pd$ denotes the packing dimension, which is bounded below by the Hausdorff dimension in general.  We are able to obtain estimates for the size of  distance sets in terms of the Fourier spectrum, including the provision of new families of sets for which the above conjectures hold.  Our main result on distance sets is the following.  This result will follow from the  more general Theorem \ref{maindistance2} given below.
\begin{thm} \label{maindistance}
If $X \subseteq \rd$ satisfies 
\[
\sup_{\theta \in [0,1]} \left( \dim_\textup{F}^\theta X + \dim_\textup{F}^{1-\theta} X \right) > d,
\]
then  $\mathcal{L}^1(D(X)) >0$.  Otherwise
\[
\hd D(X) \geq 1-d+\sup_{\theta \in [0,1]}  \left( \dim_\textup{F}^\theta X + \dim_\textup{F}^{1-\theta} X \right) .
\]
\end{thm}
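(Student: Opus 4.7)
The plan is to mimic Mattila's analytic approach to the distance set problem and use the Fourier spectrum to decouple the spherical Mattila integral via H\"older's inequality. Fix $\theta \in (0,1)$ and $\varepsilon > 0$, and choose finite Borel measures $\mu_1, \mu_2$ supported on $X$ with $\fs \mu_1 \geq \fs X - \varepsilon$ and $\dim_\mathrm{F}^{1-\theta} \mu_2 \geq \dim_\mathrm{F}^{1-\theta} X - \varepsilon$. Form the distance measure $\nu = \delta_\#(\mu_1 \times \mu_2)$, where $\delta(x,y)=|x-y|$; this is a finite Borel measure supported on $D(X)$. The key analytic input is a bilinear Mattila-type estimate
\[
\mathcal{I}_t(\nu) \lesssim 1 + \int_1^\infty \sigma_{\mu_1}(r)\, \sigma_{\mu_2}(r)\, r^{d+t-2}\, dr \qquad (t \in (0,1]),
\]
where $\sigma_\mu(r) = \int_{S^{d-1}} |\widehat\mu(r\sigma)|^2 d\sigma$; finiteness for some $t \in (0,1)$ gives $\hd D(X) \geq t$ via the potential-theoretic method, while finiteness at $t=1$ gives $\nu \in L^2(\mathbb{R})$ and hence $\mathcal{L}^1(D(X))>0$. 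I would take this as granted; it is presumably established in the more general Theorem~\ref{maindistance2} (and the bilinear version follows from the single-measure case via Cauchy--Schwarz on the sphere).

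The key step is to bound the right-hand side via the Fourier spectrum \emph{asymmetrically}. H\"older on the sphere gives
\[
\sigma_{\mu_1}(r) \lesssim \left(\int_{S^{d-1}} |\widehat{\mu_1}(r\sigma)|^{2/\theta} d\sigma\right)^{\theta}, \qquad \sigma_{\mu_2}(r) \lesssim \left(\int_{S^{d-1}} |\widehat{\mu_2}(r\sigma)|^{2/(1-\theta)} d\sigma\right)^{1-\theta}.
\]
Split the radial weight $r^{d+t-2} = r^{a\theta}\, r^{b(1-\theta)}$ with $a\theta + b(1-\theta) = d+t-2$ and apply H\"older in $r$ with conjugate exponents $1/\theta$ and $1/(1-\theta)$:
\[
\int_1^\infty \sigma_{\mu_1}(r)\sigma_{\mu_2}(r)\, r^{d+t-2}\, dr \lesssim \left(\int_1^\infty r^a\! \int_{S^{d-1}} |\widehat{\mu_1}|^{2/\theta} d\sigma\, dr\right)^{\theta}\!\!\left(\int_1^\infty r^b\! \int_{S^{d-1}} |\widehat{\mu_2}|^{2/(1-\theta)} d\sigma\, dr\right)^{1-\theta}.
\]
Reverting to Cartesian coordinates via $d\sigma\, dr = r^{-(d-1)} dz$, the first bracket equals $\mathcal{J}_{s_1,\theta}(\mu_1)^{1/\theta}$ with $s_1 = (a+1)\theta$, and the second equals $\mathcal{J}_{s_2,1-\theta}(\mu_2)^{1/(1-\theta)}$ with $s_2 = (b+1)(1-\theta)$, up to a harmless contribution from $|z|\leq 1$.

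The radial constraint rewrites as $s_1 + s_2 = d+t-1$, so the Mattila integral is finite precisely when one can pick $s_1 < \fs \mu_1$ and $s_2 < \dim_\mathrm{F}^{1-\theta}\mu_2$ with $s_1+s_2 = d+t-1$, i.e.\ when $t < 1-d + \fs\mu_1 + \dim_\mathrm{F}^{1-\theta}\mu_2$. Letting $\varepsilon \to 0$ and taking the supremum over $\theta$, and writing $A := \sup_{\theta}(\fs X + \dim_\mathrm{F}^{1-\theta} X)$, one concludes $\hd D(X) \geq 1-d+A$ when $A \leq d$, and the $t=1$ case delivers $\mathcal{L}^1(D(X))>0$ when $A > d$. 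The endpoints $\theta \in \{0,1\}$ require only minor modifications, replacing one H\"older factor by the pointwise bound $|\widehat\mu(z)| \lesssim |z|^{-\fd\mu/2}$ coming from the definition of the Fourier dimension. The main obstacle is really the bilinear Mattila estimate itself: extracting the precise power $r^{d+t-2}$ requires careful harmonic-analytic work on spherical averages of $\widehat{\mu_1 \ast \widetilde{\mu_2}}$ after an appropriate symmetrisation, which is the genuine analytic content hidden in Theorem~\ref{maindistance2}; the H\"older decoupling above is then routine exponent bookkeeping.
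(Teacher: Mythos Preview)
Your proposal is correct and follows essentially the paper's route: pick two measures on $X$ optimised for $\theta$ and $1-\theta$, invoke a bilinear Mattila spherical-average estimate (the paper's Proposition~\ref{technical}, quoted from Mattila~\cite{mattiladistance}, stated there with the mixed average $\sigma_{\mu,\nu}(r)=\int_{S^{d-1}}\widehat\mu\,\overline{\widehat\nu}\,d\sigma$ rather than your $\sigma_{\mu_1}\sigma_{\mu_2}$, so your Cauchy--Schwarz step is indeed needed), and then decouple via H\"older with exponents $1/\theta,\,1/(1-\theta)$ to land on $\mathcal{J}_{s,\theta}(\mu_1)\,\mathcal{J}_{t,1-\theta}(\mu_2)$ with exactly your exponent bookkeeping $s_1+s_2=d+t-1$. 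The only substantive difference is the positive-measure case: instead of pushing the Mattila integral to the endpoint $t=1$, the paper gives a separate and more direct argument, showing $\mu*\widetilde{\nu}\in L^2(\rd)$ via the convolution bound of Theorem~\ref{convolution} and then disintegrating Lebesgue measure on the difference set $\textup{spt}(\mu)-\textup{spt}(\nu)$ in polar coordinates to extract positive $\mathcal{L}^1$-measure for $D(\textup{spt}(\mu),\textup{spt}(\nu))$.
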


We note that it is possible for
\[
\sup_{\theta \in [0,1]} \left(\dim_\textup{F}^\theta X + \dim_\textup{F}^{1-\theta} X \right) > d,
\]
to hold  for sets with $\fd X = 0$ and $\hd X < \pd X$, even with $\hd X$ arbitrarily close to $d/2$. 

If we set  $\theta = 0$ in Theorem \ref{maindistance} then we find that if $X \subseteq \rd$ is a Borel set with $ \fd X+\hd X > d$, then  $\mathcal{L}^1(D(X)) >0$, with  a corresponding dimension bound.  These results were obtained by Mattila in \cite[Theorem 5.3]{mattiladistance}.  Note that this implies the measure (and dimension) version of the original conjecture for Salem sets.  Setting $\theta=1/2$ in Theorem \ref{maindistance} yields a pleasant corollary which appears more similar to the original conjecture.

\begin{cor}
Let  $X \subseteq \rd$.  If $\dim_\textup{F}^{1/2} X   > d/2$, then  $\mathcal{L}^1(D(X)) >0$ and, otherwise, $\hd D(X) \geq 1-d+ 2 \dim_\textup{F}^{1/2} X$.
\end{cor}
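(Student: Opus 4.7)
The plan is to derive this corollary as an immediate specialization of Theorem \ref{maindistance} at the symmetric point $\theta=1/2$, where the two Fourier spectrum contributions in the supremum collapse into a single term.

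First I would observe that the function $\theta \mapsto \dim_\textup{F}^\theta X + \dim_\textup{F}^{1-\theta} X$ takes the value $2 \dim_\textup{F}^{1/2} X$ at $\theta=1/2$, so
\[
\sup_{\theta \in [0,1]} \left(\dim_\textup{F}^\theta X + \dim_\textup{F}^{1-\theta} X\right) \;\geq\; 2\, \dim_\textup{F}^{1/2} X.
\]
If $\dim_\textup{F}^{1/2} X > d/2$, then this supremum strictly exceeds $d$ and the first conclusion of Theorem \ref{maindistance} immediately yields $\mathcal{L}^1(D(X)) > 0$. In the complementary case $\dim_\textup{F}^{1/2} X \leq d/2$, I would split into two sub-cases: if the full supremum over $\theta$ nonetheless exceeds $d$, then $D(X)$ has positive Lebesgue measure and so $\hd D(X) = 1 \geq 1 - d + 2\dim_\textup{F}^{1/2} X$ trivially; otherwise the second conclusion of Theorem \ref{maindistance}, combined with the lower bound $2\dim_\textup{F}^{1/2} X$ on the supremum, gives $\hd D(X) \geq 1 - d + 2\dim_\textup{F}^{1/2} X$ as required.

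Since the corollary is essentially a one-line specialization of an already-stated result, there is no genuine obstacle. The only minor point to verify is the book-keeping in the sub-case above, namely that the bound $1 - d + 2\dim_\textup{F}^{1/2} X$ stays at most $1$ whenever $\dim_\textup{F}^{1/2} X \leq d/2$, so the dimension statement is consistent with (and weaker than) the positive-measure statement on the overlap.
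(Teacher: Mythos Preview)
Your proposal is correct and follows exactly the approach the paper intends: the corollary is obtained by specializing Theorem~\ref{maindistance} to $\theta=1/2$, using that the supremum is at least its value at this point. Your extra care in handling the sub-case where $\dim_\textup{F}^{1/2} X \leq d/2$ but the full supremum might still exceed $d$ is a nice touch that the paper leaves implicit.
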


Since for $\theta \neq 1/2$ different values of $\theta$ are used simultaneously in Theorem  \ref{maindistance}, we are led naturally to consider \emph{two} measures supported on $X$, one for $\theta$ and one for $1-\theta$.  This leads us to consider mixed distance sets.  Given sets $X,Y \subseteq \rd$, the \emph{mixed distance set} of $X$ and $Y$ is
\[
D(X,Y) = \{ |x-y| : x \in X, y \in Y\}.
\]
Of course $D(X,X)$ recovers $D(X)$, but $D(X,Y)$ is (typically) a strict subset of $D(X \cup Y)$.  Theorem \ref{maindistance} follows from the following more general result which considers  mixed distance sets and directly uses the energies.

\begin{thm} \label{maindistance2}
Suppose  $\mu$ and $\nu$ are finite Borel measures on $\rd$ with
\[
\mathcal{J}_{s,\theta}(\mu) < \infty
\]
and
\[
\mathcal{J}_{t,1-\theta}(\nu) < \infty
\]
for some $s,t \geq 0$.  If $s+t \geq d$, then
\[
\mathcal{L}^1(D(\textup{spt}(\mu), \textup{spt}(\nu))) >0
\]
and if $s+t < d$, then    
\[
\hd D(\textup{spt}(\mu), \textup{spt}(\nu)) \geq 1-d+s+t.
\]
\end{thm}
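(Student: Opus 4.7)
The natural approach is to work with the \emph{difference measure} $\rho = \mu * \widetilde{\nu}$, where $\widetilde{\nu}(A) = \nu(-A)$, since then $\widehat{\rho}(\xi) = \widehat{\mu}(\xi)\,\overline{\widehat{\nu}(\xi)}$ and hence $|\widehat{\rho}|^2 = |\widehat{\mu}|^2|\widehat{\nu}|^2$, while the distance measure $\delta_{\mu,\nu}$ is precisely the pushforward of $\rho$ under $z \mapsto |z|$. The argument then has two ingredients: a H\"older inequality that repackages the two hypothesised Fourier spectrum bounds as a single Sobolev-type energy bound for $\rho$, followed by a standard Fourier-analytic estimate relating the energy of the distance measure to that of $\rho$.

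\textbf{Steps.} After a routine reduction to compactly supported $\mu,\nu$ (via the smooth truncation trick used in the proof of Theorem~\ref{ctyX}, which preserves the energy conditions up to additive constants), the crucial step is H\"older's inequality with conjugate exponents $1/\theta$ and $1/(1-\theta)$:
\begin{align*}
\mathcal{J}_{s+t,1}(\rho) &= \int_{\rd}|\widehat{\mu}(\xi)|^2|\widehat{\nu}(\xi)|^2|\xi|^{s+t-d}\,d\xi \\
&= \int\bigl(|\widehat{\mu}(\xi)|^{2/\theta}|\xi|^{s/\theta-d}\bigr)^{\theta}\bigl(|\widehat{\nu}(\xi)|^{2/(1-\theta)}|\xi|^{t/(1-\theta)-d}\bigr)^{1-\theta}\,d\xi \\
&\leq \mathcal{J}_{s,\theta}(\mu)\cdot\mathcal{J}_{t,1-\theta}(\nu) \;<\; \infty ,
\end{align*}
the key verification being that $(s/\theta-d)\theta+(t/(1-\theta)-d)(1-\theta)=s+t-d$. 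I would then invoke Mattila's classical estimate: for $\rho$ compactly supported in $\rd$ with $d\ge 2$ and $0<u<1$,
\[
\mathcal{I}_u(\delta_{\mu,\nu})\;\lesssim\; 1+\int_{\rd}|\widehat{\rho}(\xi)|^2|\xi|^{u-1}\,d\xi \;=\; 1+\mathcal{J}_{u+d-1,1}(\rho),
\]
together with the endpoint $\|\delta_{\mu,\nu}\|_{L^2(\mathbb{R})}^2\lesssim 1+\mathcal{J}_{d,1}(\rho)$ (the case $d=1$ is elementary since the distance map is locally bi-Lipschitz away from the origin). Combining with the H\"older bound: when $s+t\ge d$, monotonicity of the Sobolev energy (Theorem~\ref{concave}) gives $\mathcal{J}_{d,1}(\rho)<\infty$, so $\delta_{\mu,\nu}$ is absolutely continuous with an $L^2$ density on $\mathbb{R}$, and its support $D(\textup{spt}(\mu),\textup{spt}(\nu))$ has positive one-dimensional Lebesgue measure; when $s+t<d$, for every $u<1-d+s+t$ we have $u+d-1<s+t$ and so $\mathcal{I}_u(\delta_{\mu,\nu})<\infty$, giving $\hd D(\textup{spt}(\mu),\textup{spt}(\nu))\ge u$ via the potential-theoretic characterisation of Hausdorff dimension, and the theorem follows by letting $u \nearrow 1-d+s+t$.

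\textbf{Main obstacle.} Conceptually, the entire novelty is the H\"older factorisation above; it explains transparently why the two parameters must sum to one, since they must be H\"older conjugates in order to line up with the factorisation $|\widehat{\rho}|^2=|\widehat{\mu}|^2|\widehat{\nu}|^2$. The only non-elementary ingredient is the Mattila-type bound, whose content lies in the large-$|\xi|$ asymptotics $\widehat{\sigma_{d-1}}(\xi)\sim|\xi|^{-(d-1)/2}$ of surface measure on the sphere, and which I would simply cite from Mattila's book (Chapter~15). Everything else --- the reduction to compact support, the monotonicity of $\mathcal{J}_{\cdot,1}$, and the extraction of $\hd$ or Lebesgue measure information from $\mathcal{I}_u(\delta_{\mu,\nu})$ or $\|\delta_{\mu,\nu}\|_{2}$ --- is routine.
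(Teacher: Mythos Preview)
Your proposal is essentially the paper's proof. The H\"older factorisation you write out is exactly Theorem~\ref{convolution} applied with $\lambda=\theta$, and the paper uses it in the same way: for $s+t\geq d$ it deduces $\widehat{\rho}\in L^2$, hence $\rho\in L^2(\rd)$ and $\mathcal{L}^d(\textup{spt}(\mu)-\textup{spt}(\nu))>0$, then gets $\mathcal{L}^1(D(\textup{spt}(\mu),\textup{spt}(\nu)))>0$ by polar coordinates (equivalent to your $\|\delta_{\mu,\nu}\|_{L^2}<\infty$ route once one has reduced to compact supports). For $s+t<d$ the paper invokes the mixed version of Mattila's spherical-average criterion (Proposition~\ref{technical}) and bounds $\int_1^\infty \sigma_{\mu,\nu}(r)^2 r^{d+\gamma-2}\,dr$ by Jensen then H\"older, which is the same chain of inequalities you propose, just done directly on the sphere rather than first passing to $\rho$.

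One cautionary remark on your citation of the sub-critical input. The inequality you state,
\[
\mathcal{I}_u(\delta_{\mu,\nu})\;\lesssim\; 1+\int_{\rd}|\widehat{\rho}(\xi)|^2|\xi|^{u-1}\,d\xi,
\]
is not quite how Mattila's Chapter~15 result is packaged. What one actually finds there (and what the paper records as Proposition~\ref{technical}) controls the \emph{weighted} distance measure $\Delta_{\mu,\nu}$ rather than $\delta_{\mu,\nu}$, and carries auxiliary hypotheses $\mathcal{I}_\alpha(\mu)<\infty$, $\mathcal{I}_\beta(\nu)<\infty$, $\gamma<\alpha+\beta-d+1$ that do \emph{not} reduce to properties of $\rho$ alone. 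These side conditions are easy to verify here (monotonicity of $\theta\mapsto\dim_{\mathrm F}^\theta$ gives $\mathcal{I}_\alpha(\mu)<\infty$ for all $\alpha<s$, etc.), and since $\Delta_{\mu,\nu}$ and $\delta_{\mu,\nu}$ have the same support the conclusion is unaffected; but the clean $\rho$-only inequality you wrote is not what is literally citable, and the paper is careful to track the auxiliary energies.
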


We defer the proof of Theorem \ref{maindistance2} (as well as the simple deduction of Theorem \ref{maindistance}) to the following subsection. Another consequence of Theorem \ref{maindistance2} (choosing $\theta=1/2$) can be stated just in terms of the Fourier transform. Again, the assumption can be satisfied even when $\fd \mu = 0$.

\begin{cor} \label{cute}
If $\mu$ is a finite Borel  measure on $\rd$ with
\[
\int |\widehat \mu(z) |^4 \, dz <\infty
\]
then  $\mathcal{L}^1(D(\textup{spt}(\mu))) >0$.
\end{cor}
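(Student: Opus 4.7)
The plan is to specialise Theorem \ref{maindistance2} to the symmetric case $\nu = \mu$, with parameters $\theta = 1/2$ and $s = t = d/2$. First I would verify that this choice is adapted to the hypothesis: with $\theta = 1/2$ the exponent $2/\theta$ equals $4$, and with $s = d/2$ the weight exponent $s/\theta - d = d - d$ vanishes, so the energy collapses to
\[
\mathcal{J}_{d/2,\,1/2}(\mu) = \left( \int_{\rd} |\widehat \mu(z)|^{4} \, dz \right)^{1/2},
\]
which is finite precisely by the hypothesis on $\mu$. Since $1 - \theta = 1/2$ as well, the second energy required by Theorem \ref{maindistance2}, namely $\mathcal{J}_{d/2,\,1/2}(\nu)$ with $\nu = \mu$, is the same finite quantity.

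Next I would observe that the dimensional balance $s + t = d/2 + d/2 = d$ lies on the boundary of the regime $s + t \geq d$ that triggers the first (positive-Lebesgue-measure) conclusion of Theorem \ref{maindistance2}. Invoking that conclusion yields $\mathcal{L}^1(D(\textup{spt}(\mu), \textup{spt}(\mu))) > 0$, and since by definition $D(\textup{spt}(\mu), \textup{spt}(\mu)) = D(\textup{spt}(\mu))$, the corollary follows.

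This is effectively a one-line deduction once Theorem \ref{maindistance2} is in hand, so no real obstacle arises. The only conceptual point worth highlighting is that the choice $\theta = 1/2$ is exactly what translates a bare $L^4$-integrability condition on $\widehat\mu$ into a statement about a Fourier spectrum energy, and that the forced choice $s = d/2$ simultaneously eliminates the radial weight \emph{and} places us at the critical value $s + t = d$. This also reflects, at the level of the proof strategy, why the exponent $4$ is sharp: it corresponds to sitting exactly on the boundary of the regime in Theorem \ref{maindistance2} where positive Lebesgue measure of the distance set can be guaranteed.
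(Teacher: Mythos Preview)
Your proposal is correct and follows exactly the same route as the paper: the paper's proof simply observes that $\mathcal{J}_{d/2,1/2}(\mu)^2 = \int |\widehat \mu(z)|^4\,dz$ and then invokes Theorem \ref{maindistance2} with $\nu=\mu$. Your write-up spells out the parameter choices $\theta=1/2$, $s=t=d/2$ and the verification that $s+t=d$ lands in the positive-measure regime, but the argument is identical in substance.
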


\begin{proof}
Since 
\[
\mathcal{J}_{d/2,1/2}(\mu)^2 = \int |\widehat \mu(z) |^4 \, dz
\]
this is an immediate consequence of Theorem \ref{maindistance2} with $\nu=\mu$.
\end{proof}

\subsection{Proof of Theorem \ref{maindistance2} }

We first prove the claim in the case when $s+t \geq d$.  Write $\nu_0$ for the measure defined by $ \nu_0(E) = \nu(-E)$ for Borel sets $E$ where $-E = \{ -x : x \in E\}$.  Then,   by Theorem \ref{convolution},
\[
\int |\widehat{\mu * \nu_0}(z)|^2 \, dz \lesssim  \mathcal{J}_{s+t,1}(\mu * \nu_0) \lesssim  \mathcal{J}_{s, \theta}(\mu ) \mathcal{J}_{t,1 -\theta}(\nu_0 ) = \mathcal{J}_{s, \theta}(\mu ) \mathcal{J}_{t,1 -\theta}(\nu  ) < \infty.
\]
It follows that $\widehat{\mu * \nu_0} \in L^2(\mathbb{R}^d)$ and therefore $\mu * \nu_0 \in L^2(\mathbb{R}^d)$ by \cite[Theorem 3.3]{mattila}.  Since $\mu * \nu_0$ is supported by the sumset $\textup{spt}(\mu)+  \textup{spt}(\nu_0) = \textup{spt}(\mu)-  \textup{spt}(\nu)$, we conclude that the \emph{difference} set  $A= \textup{spt}(\mu)-  \textup{spt}(\nu)$ has positive $d$-dimensional Lebesgue measure.  Therefore
\[
0< \int_{\rd} \textbf{1}_{A}(z) \, dz = \int_{S^{d-1}} \int_{0}^\infty \textbf{1}_{A}(rv) \, r^{d-1}dr  d\sigma_{d-1}(v) 
\]
where $\sigma_{d-1}$ is the surface measure on $S^{d-1}$.  This  implies the existence of (many) $v \in S^{d-1}$ such that $\mathcal{L}^1(\{ r : rv \in A\})>0$.  However, for all $v \in S^{d-1}$,  $\{ r : rv \in A\} \subset  D(\textup{spt}(\mu), \textup{spt}(\nu))$, proving the result.

The proof in the sub-critical case $s+t<d$ is morally similar but rather more complicated because we cannot disintegrate $s$-dimensional Hausdorff measure $\mathcal{H}^s$ for $s<d$ since it is not a $\sigma$-finite measure.  Fortunately, we can  appeal to a deep result of Mattila which connects   (quadratic) spherical averages and distance sets, see, for example,  \cite[Proposition 15.2 (b)]{mattila} and \cite[Theorems 4.16-4.17]{mattiladistance}.  Following \cite{mattiladistance}, we need a more general  statement than \cite[Proposition 15.2 (b)]{mattila} where we  associate a `distance measure' to two measures rather than a single measure (repeated twice).  Define  the mixed quadratic spherical average of $\mu$ and $\nu$ for $r>0$ by 
\[
\sigma_{\mu,\nu}(r) = \int_{S^{d-1}} \widehat \mu(r v)  \overline{\widehat \nu(r v)}\, d\sigma_{d-1}(v)
\]
noting that  $\sigma_{\mu,\mu}(r) $ is the usual quadratic spherical average from \cite[Section 15.2]{mattila}. The mutual energy of $\mu$ and $\nu$ may then be expressed as 
\[
\mathcal{I}_s(\mu,\nu) : = \int \int \frac{d \mu(x) \, d \nu (y)}{|x-y|^s} \approx    \int_{\rd} \widehat \mu(z) \overline{\widehat \nu(z)} |z|^{s-d} \, dz = \int_{0}^\infty \sigma_{\mu,\nu}(r)  r^{s-1} \, dr 
\]
for $s\in (0,d)$.  Define a `mixed distance measure' $\delta_{\mu, \nu}$ by
\[
\int \phi(x) \, d\delta_{\mu, \nu}(x) = \int \int \phi(|x-y|) \, d\mu x \, d \nu y
\]
for continuous functions $\phi$ on $\mathbb{R}$. This recovers the distance measure $\delta(\mu)$ defined in \cite{mattila} for $\mu=\nu$.  Then $\delta_{\mu, \nu}$  is a finite Borel measure supported on the mixed distance set $D(\textup{spt}(\mu), \textup{spt}(\nu))$. Finally,  define the weighted mixed distance measure $\Delta_{\mu, \nu}$ by
\[
\int \phi(x) \, d\Delta_{\mu, \nu}(x) = \int  u^{(1-d)/2} \phi(u) \, d\delta_{\mu, \nu}(u) 
\]
for continuous functions $\phi$ on $\mathbb{R}$.  The following result is from (the proof of) \cite[Theorem 4.17]{mattiladistance} and is the key technical tool in proving Theorem \ref{maindistance2}.  It   recovers \cite[Proposition 15.2 (b)]{mattila} when $\mu=\nu$.

\begin{prop}(\cite[(Proof of) Theorem 4.17]{mattiladistance}) \label{technical}
Suppose $\mu$ and $\nu$ are finite Borel measures on $\rd$ and $0 < \alpha, \beta <d$ are such that $\mathcal{I}_\alpha(\mu)< \infty$ and $\mathcal{I}_\beta(\nu)< \infty$.  If $\gamma \in (0,1)$ is such that $\gamma < \alpha+\beta-d+1$ and
\[
\int_1^\infty \sigma_{\mu,\nu}(r) ^2 r^{d+\gamma-2} \, dr < \infty,
\]
then $\mathcal{I}_\gamma(\Delta_{\mu,\nu}) < \infty$.
\end{prop}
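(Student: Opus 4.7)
The plan is to reduce everything to estimating $\int |\widehat{\Delta_{\mu,\nu}}(\xi)|^2 |\xi|^{\gamma-1}\, d\xi$ via the one-dimensional Fourier characterization
\[
\mathcal{I}_\gamma(\Delta_{\mu,\nu}) \approx \int_{\mathbb{R}} |\widehat{\Delta_{\mu,\nu}}(\xi)|^2 |\xi|^{\gamma-1}\, d\xi,
\]
then to express $\widehat{\Delta_{\mu,\nu}}$ in terms of $\sigma_{\mu,\nu}$ and split the integral at $|\xi|=1$.

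\textbf{Step 1: Integral representation of $\widehat{\Delta_{\mu,\nu}}$.}  Let $\widetilde\nu(E) = \nu(-E)$, so that $\widehat{\mu\ast\widetilde\nu}(z) = \widehat\mu(z)\overline{\widehat\nu(z)}$.  For any continuous $\phi$ on $\mathbb{R}$,
\[
\int \phi(|x|)\, d(\mu\ast\widetilde\nu)(x) = \int \phi(u)\, d\delta_{\mu,\nu}(u),
\]
so $\delta_{\mu,\nu}$ is the pushforward of $\mu \ast \widetilde\nu$ under $x\mapsto |x|$.  After an approximation by smooth mollifications (which I would pass to the limit at the end) Fourier inversion in polar coordinates $\eta = \rho w$, $w \in S^{d-1}$, together with the identity $\int_{S^{d-1}} e^{2\pi i r v\cdot\eta}\, d\sigma_{d-1}(v) = \widehat{\sigma_{d-1}}(r\eta)$, gives
\[
\widehat{\delta_{\mu,\nu}}(\xi) = \int_0^\infty \rho^{d-1}\, \sigma_{\mu,\nu}(\rho)\, \Phi(\rho,\xi)\, d\rho,
\]
where $\Phi(\rho,\xi)$ is an oscillatory kernel built from $\widehat{\sigma_{d-1}}(\cdot)$.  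Since $d\Delta_{\mu,\nu}(u) = u^{(1-d)/2}\, d\delta_{\mu,\nu}(u)$, an analogous representation holds for $\widehat{\Delta_{\mu,\nu}}$, with an extra factor that shifts the power of $\rho$ by $(1-d)/2$.

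\textbf{Step 2: Stationary-phase expansion and main term.}  The classical asymptotic
\[
\widehat{\sigma_{d-1}}(\eta) = c_+\, |\eta|^{-(d-1)/2} e^{2\pi i|\eta|} + c_-\, |\eta|^{-(d-1)/2} e^{-2\pi i|\eta|} + O\bigl(|\eta|^{-(d+1)/2}\bigr)
\]
lets me decompose $\Phi(\rho,\xi)$ into two oscillatory main terms plus a faster-decaying remainder.  The main terms collapse, after substituting into the representation above, to something essentially of the form $\sigma_{\mu,\nu}(|\xi|) |\xi|^{(d-1)/2}$, which yields the key pointwise bound
\[
|\widehat{\Delta_{\mu,\nu}}(\xi)|^2 \lesssim |\sigma_{\mu,\nu}(|\xi|)|^2 |\xi|^{d-1} + \mathcal{E}(\xi) \qquad (|\xi| \geq 1),
\]
where $\mathcal{E}(\xi)$ gathers the contributions of the $O(|\eta|^{-(d+1)/2})$ remainder.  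Integrating the first term against $|\xi|^{\gamma-1}$ in polar coordinates gives
\[
\int_{|\xi|\geq 1} |\sigma_{\mu,\nu}(|\xi|)|^2 |\xi|^{d+\gamma-2}\, d\xi \approx \int_1^\infty \sigma_{\mu,\nu}(r)^2\, r^{d+\gamma-2}\, dr,
\]
which is finite by hypothesis.

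\textbf{Step 3: Low-frequency and error terms.}  Near $\xi = 0$, I use that $\widehat{\Delta_{\mu,\nu}}$ is bounded (provided $\Delta_{\mu,\nu}$ is a finite measure, i.e.\ $\mathcal{I}_{(d-1)/2}(\mu,\nu) < \infty$; this is deducible from $\mathcal{I}_\alpha(\mu), \mathcal{I}_\beta(\nu) < \infty$ and Cauchy--Schwarz applied to the polar expression $\mathcal{I}_s(\mu,\nu) \approx \int_0^\infty r^{s-1}\sigma_{\mu,\nu}(r)\, dr$), combined with integrability of $|\xi|^{\gamma-1}$ near $0$ since $\gamma > 0$.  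The error term $\mathcal{E}(\xi)$ enjoys an extra factor of $|\xi|^{-1}$ compared with the main term; its contribution is controlled using the mutual-energy bound $\int_0^\infty \sigma_{\mu,\nu}(\rho)\rho^{\alpha+\beta-d-1}\, d\rho < \infty$ (obtained from $\mathcal{I}_\alpha(\mu), \mathcal{I}_\beta(\nu) < \infty$ by Cauchy--Schwarz on the polar energy formula), and this is exactly where the constraint $\gamma < \alpha + \beta - d + 1$ is used to make the resulting exponents match up.

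\textbf{Main obstacle.}  The hard part is the careful bookkeeping in Steps 1--2: rigorously interchanging integrals when $\mu\ast\widetilde\nu$ is only a measure (I would first mollify and then pass to the limit using monotone or dominated convergence), and controlling the remainder $\mathcal{E}(\xi)$ uniformly after the stationary-phase expansion — this is exactly where the constraint $\gamma < \alpha + \beta - d + 1$ enters, and matching the powers precisely is delicate.  These technicalities are carried out in the proof of \cite[Theorem 4.17]{mattiladistance}, and the plan is to follow that argument rather than redo it from scratch.
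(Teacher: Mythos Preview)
The paper does not prove this proposition: it is quoted as a black box from \cite[(Proof of) Theorem 4.17]{mattiladistance} and then applied directly in the proof of Theorem \ref{maindistance2}.  Your proposal is therefore not a competing proof but a sketch of the very argument being cited, and you say as much in your final paragraph.  In that sense you and the paper take the same ``approach'' --- defer to Mattila --- with your version filling in an outline of what happens there (Fourier energy characterisation of $\mathcal{I}_\gamma$, radial representation of $\widehat{\Delta_{\mu,\nu}}$ via the pushforward $\mu*\widetilde\nu \to \delta_{\mu,\nu}$, stationary-phase asymptotics for $\widehat{\sigma_{d-1}}$, and control of the remainder through the mutual-energy bounds).

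One small caution on your Step 2: the relationship between $\widehat{\Delta_{\mu,\nu}}(\xi)$ and $\sigma_{\mu,\nu}$ is not the pointwise bound $|\widehat{\Delta_{\mu,\nu}}(\xi)|^2 \lesssim |\sigma_{\mu,\nu}(|\xi|)|^2 |\xi|^{d-1} + \mathcal{E}(\xi)$ that you wrote.  After inserting the Bessel asymptotics one still has an integral in $\rho$ with oscillatory factors $e^{\pm 2\pi i \rho}$, so the main term is a genuine (half-line) Fourier transform of $\rho \mapsto \rho^{(d-1)/2}\sigma_{\mu,\nu}(\rho)$ evaluated at $\xi$, not $\sigma_{\mu,\nu}(|\xi|)$ itself.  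The conclusion you want then comes from Plancherel in the radial variable rather than a pointwise substitution; this is how the $L^2$ integral $\int_1^\infty \sigma_{\mu,\nu}(r)^2 r^{d+\gamma-2}\, dr$ actually enters.  Your Step 3 reasoning (finiteness of $\Delta_{\mu,\nu}$ from $\mathcal{I}_{(d-1)/2}(\mu,\nu)<\infty$, which in turn follows from $\alpha+\beta>d-1$, itself implied by $\gamma>0$) is fine.
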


We are ready to prove the sub-critical bound from Theorem \ref{maindistance2}. We aim to apply Proposition \ref{technical} with  $\alpha<s$ and $\beta < t$ where $s$ and $t$ are from the  statement of Theorem \ref{maindistance2}.   First, note that
\[
\mathcal{I}_\alpha(\mu) \approx \mathcal{J}_{\alpha,1}(\mu) \lesssim \mathcal{J}_{s,\theta}(\mu) < \infty
\]
and
\[
\mathcal{I}_\beta(\mu) \approx \mathcal{J}_{\beta,1}(\nu) \lesssim \mathcal{J}_{t,1-\theta}(\mu) < \infty
\]
by assumption and since the Fourier spectrum is non-decreasing, see (proof of) Theorem \ref{concave}. Choose  $\gamma \in (0,1)$   such that $ d+\gamma - 2<\alpha+\beta-1$.  Then 
\begin{align*}
& \hspace{-0.5cm} \int_1^\infty \sigma_{\mu,\nu}(r) ^2  r^{d+\gamma-2} \, dr \leq  \int_1^\infty \left( \int_{S^{d-1}} |\widehat \mu(r v)|  |\widehat \nu(r v)|\, d\sigma_{d-1}(v)\right)^2  r^{\alpha+\beta-1} \, dr \\
&\lesssim    \int_1^\infty \int_{S^{d-1}} |\widehat \mu(r v)|^2   |\widehat \nu(r v)|^2   r^{\alpha+\beta-1} \, d\sigma_{d-1}(v)\, dr \qquad \text{(Jensen's inequality)} \\
&=    \int_1^\infty \int_{S^{d-1}} |\widehat \mu(r v)|^2  r^{\alpha-\theta}   |\widehat \nu(r v)|^2   r^{\beta+\theta-1} \, d\sigma_{d-1}(v)\, dr \\
&\leq     \left(\int_1^\infty \int_{S^{d-1}} |\widehat \mu(r v)|^{2/\theta}  r^{\alpha/\theta-1}  \, d\sigma_{d-1}(v)\, dr  \right)^\theta \\
&\, \hspace{1cm} \cdot \left( \int_1^\infty \int_{S^{d-1}}   |\widehat \nu(r v)|^{2/(1-\theta)}   r^{\beta/(1-\theta) - 1} \, d\sigma_{d-1}(v)\, dr  \right)^{1-\theta}  \qquad   \text{(H\"older's inequality)}\\
&\leq  \left(\int_{\mathbb{R}^d} |\widehat \mu(z)  |^{2/\theta} |z|^{\alpha/\theta-d} \, dz\right)^{\theta}  \left(\int_{\mathbb{R}^d} |\widehat \nu(z)  |^{2/(1-\theta)} |z|^{\beta/(1-\theta)-d} \, dz\right)^{1-\theta}\\
&=  \mathcal{J}_{\alpha,\theta}(\mu)   \mathcal{J}_{\beta,1-\theta}(\nu) \\
&\leq  \mathcal{J}_{s,\theta}(\mu)   \mathcal{J}_{t,1-\theta}(\nu) \\
& < \infty.
\end{align*}
Apply Proposition \ref{technical} to deduce that 
\[
\hd D(\textup{spt}(\mu), \textup{spt}(\nu)) =  \hd \textup{spt}(\Delta_{\mu, \nu} ) \geq \gamma
\]
and then letting $\gamma \to \alpha+\beta+1-d $ and then  $\alpha \to s$ and $\beta \to t$ proves the result.

Finally, Theorem \ref{maindistance} follows easily from Theorem \ref{maindistance2}.  Since $\dim_\textup{F}^\theta X$ is continuous in $\theta \in [0,1]$ by Theorem \ref{ctyX}, we may choose $\theta \in [0,1]$ such that 
\[
 \dim_\textup{F}^\theta X + \dim_\textup{F}^{1-\theta} X
\]
attains its supremum.  Then, by definition, for all $s< \dim_\textup{F}^\theta X $ and $t<\dim_\textup{F}^{1-\theta} X $,  there exist finite Borel measures  $\mu$ and $\nu$ with   supports contained in $X$ such that 
 \[
\mathcal{J}_{s,\theta}(\mu) < \infty
\]
and
\[
\mathcal{J}_{t,1-\theta}(\nu) < \infty.
\]
The result then follows by applying Theorem \ref{maindistance2}.

\section{Random sets and measures}

There are many interesting connections between Fourier analysis and random processes, see \cite{kahane} especially. In part as an invitation to further investigation along these lines, we close by presenting two applications of the Fourier spectrum to problems concerning random sets and measures.

There is a well-known connection between Fourier decay of random measures and decay of the moments $\mathbb{E}(|\widehat \mu(z)|^{k})$, see \cite{kahane}.   In particular, a method of Kahane allows one to transfer quantitative information about the moments to an almost sure lower bound on the Fourier dimension. This technique has been used to show that many random sets are almost surely Salem, including fractional Brownian images. Kahane's approach was   formalised in a general setting by Ekstr\"om \cite[Lemma 6]{ekstrom}.  We  recover and generalise Ekstr\"om's lemma    with a very simple proof.

\begin{lma} \label{moment}
Suppose $\mu$ is a random finite Borel measure on $\rd$ such that for $\theta \in (0,1]$ and all $z \in \mathbb{R}^d$
\[
\mathbb{E}(|\widehat \mu(z)|^{2/\theta}) \lesssim |z|^{-s/\theta}.
\]
Then  $\fs \mu \geq s$ almost surely. In particular, if  $\widehat \mu$ is H\"older almost surely and for a sequence of  $k \in \mathbb{N}$ tending to infinity  it holds that for all $z \in \mathbb{R}^d$
\[
\mathbb{E}(|\widehat \mu(z)|^{2k}) \lesssim |z|^{-sk},
\]
 then $\fd \mu \geq s$ almost surely. Further, if    $\widehat \mu$ is $\alpha$-H\"older almost surely and there exists $\eps>0$ and $\theta \in (0,1)$ such that
\[
\mathbb{E}(|\widehat \mu(z)|^{2/\theta}) \lesssim |z|^{-(d+\eps)}
\]
 then 
\[
\fd \mu \geq \frac{2 \alpha \eps \theta}{2 \alpha+ d\theta }>0
\] 
almost surely.
\end{lma}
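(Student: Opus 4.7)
The plan is to apply Fubini's theorem to the defining integral for $\mathcal{J}_{s,\theta}(\mu)$ and then bootstrap via Theorem \ref{cty0}. I will handle the three assertions in turn.

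For the first assertion, I would fix any $s' \in (0, s)$ (the case $s = 0$ being vacuous) and compute via Tonelli
\[
\mathbb{E}\bigl(\mathcal{J}_{s',\theta}(\mu)^{1/\theta}\bigr) = \int_{\mathbb{R}^d} \mathbb{E}\bigl(|\widehat\mu(z)|^{2/\theta}\bigr) |z|^{s'/\theta - d} \, dz.
\]
Using the hypothesised decay for $|z| \geq 1$ together with the trivial pointwise bound $|\widehat\mu(z)| \leq \mu(\mathbb{R}^d)$ near the origin, this integral is finite precisely because $0 < s' < s$. Hence $\mathcal{J}_{s',\theta}(\mu) < \infty$ almost surely, and intersecting over a countable sequence $s'_n \nearrow s$ yields $\fs \mu \geq s$ almost surely.

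For the second assertion, I would apply the Fubini argument above with $\theta = 1/k$ for each $k$ in the given sequence, obtaining $\dim^{1/k}_\mathrm{F} \mu \geq s$ almost surely for every such $k$. On the full-measure event on which this holds simultaneously for all $k$ and $\widehat\mu$ is H\"older, Theorem \ref{cty0} rearranges to
\[
\fd \mu \geq \frac{\fs \mu - d\theta}{1 + d\theta/(2\alpha)},
\]
and specialising to $\theta = 1/k$ gives $\fd \mu \geq (s - d/k)/(1 + d/(2\alpha k))$; letting $k \to \infty$ yields $\fd \mu \geq s$. The third assertion follows the same pattern: the Fubini argument applied with $s$ replaced by $\theta(d+\eps)$ gives $\fs \mu \geq \theta(d+\eps)$ almost surely, and plugging this into the same rearrangement of Theorem \ref{cty0} produces exactly the stated lower bound $2\alpha\eps\theta/(2\alpha + d\theta)$.

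The main obstacle is more bookkeeping than substance: one must pass from almost-sure finiteness for each individual $s'$ (or each $k$) to a single full-measure event on which all of the relevant inequalities hold simultaneously, which is handled by countable intersection. Once this is combined with Theorem \ref{cty0} -- precisely the statement that converts control of an $L^p$-average of $|\widehat\mu|$ into uniform Fourier decay -- the rest is elementary.
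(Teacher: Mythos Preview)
Your proposal is correct and follows essentially the same route as the paper: Fubini/Tonelli applied to $\mathcal{J}_{t,\theta}(\mu)^{1/\theta}$ gives the first claim, and the second and third claims are then read off from Theorem~\ref{cty0}. Your write-up is in fact more explicit than the paper's---you spell out the rearrangement of Theorem~\ref{cty0} and the countable-intersection bookkeeping, whereas the paper dispatches the second claim in one clause (``follows since $\widehat\mu$ is almost surely continuous'', i.e.\ continuity of $\theta\mapsto\fs\mu$ at $0$) and the third by simply citing Theorem~\ref{cty0}.
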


\begin{proof}
Let $t < s$.  By Fubini's theorem
\begin{align*}
\mathbb{E} ( \mathcal{J}_{t,\theta}(\mu)^{1/\theta}) \leq \int_{\rd} |z|^{t/\theta-d} \mathbb{E}(|\widehat \mu(z)|^{2/\theta}) \, dz \lesssim \int_{\rd} |z|^{(t-s)/\theta-d} \, dz < \infty
\end{align*}
which proves the first claim. The second claim then follows since $\widehat \mu$ is almost surely continuous and the third claim follows   by combining the first claim with the bounds from Theorem \ref{cty0}. 
\end{proof}

To bound the Fourier dimension from below using Lemma \ref{moment}, we only require a sufficiently good bound for a single moment, whereas   \cite[Lemma 6]{ekstrom} (and \cite[Lemma 1, page 252]{kahane} which its proof relies on) requires control of all the moments simultaneously. Moreover, \cite[Lemma 6]{ekstrom} requires the measures to be almost surely supported in a fixed compact set, whereas our random measures can be unbounded as long as $\widehat \mu$ is H\"older almost surely (with a random H\"older exponent). For both of these improvements, our gain comes from continuity of the Fourier spectrum. Among other things, Ekstr\"om uses the above approach to construct random images of compactly supported measures with large Fourier dimension almost surely \cite[Theorem 2]{ekstrom}.  Using Lemma \ref{moment} this can be extended to include   non-compactly supported measures. We leave the details to the reader.

For our second application we consider an explicit random model based on fractional Brownian motion in a setting where the Fourier dimension alone yields only trivial results.  Given $\alpha \in (0,1)$ and integers $n,d \geq 1$, let $B^\alpha: \mathbb{R}^n \to \rd$ be index $\alpha$ fractional Brownian motion.  See \cite[Chapter 18]{kahane} for the construction and a detailed analysis of this process ($B^\alpha$ is Kahane's $(n,d,\gamma)$ Gaussian process with $\gamma=2\alpha$).  In particular, \cite[page 267, Theorem 1]{kahane} gives that for a compact set $Y \subseteq \mathbb{R}^n$ with $\hd Y > s$ the image  $B^\alpha(Y)$ almost surely  supports a measure $\mu$ satisfying
\begin{equation} \label{kahaneeq}
\sd \mu \geq \fd \mu \geq  s/\alpha.
\end{equation}
Combining this with Theorem \ref{convolution} immediately gives the following.

\begin{cor} \label{randomeasy}
Let $X \subseteq  \rd$ and $Y \subseteq \mathbb{R}^n$ be  non-empty Borel sets.   If $\hd Y >0$ and
\[
\alpha < \frac{\hd Y}{d-\hd X},
\]
then $X+ B^\alpha(Y)$ has positive $d$-dimensional Lebesgue measure almost surely.  Otherwise, 
\[
\hd (X+ B^\alpha(Y)) \geq  \hd X + \frac{\hd Y}{\alpha}
\]
almost surely.
\end{cor}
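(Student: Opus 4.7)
The plan is to construct suitable measures on $X$ and on $B^\alpha(Y)$ separately and then feed their convolution into Theorem \ref{convolution} to produce a measure supported in $X + B^\alpha(Y)$ whose Sobolev dimension we can bound from below.

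First I would fix a small $\eps > 0$ and, using the standard potential theoretic characterisation of Hausdorff dimension, pick a compactly supported finite Borel measure $\mu_X$ on $X$ with $\sd \mu_X \geq \hd X - \eps$. By inner regularity I would then choose a compact subset $Y_0 \subseteq Y$ with $\hd Y_0 > \hd Y - \eps$, and Kahane's theorem (as recorded in \eqref{kahaneeq}) yields, almost surely, a finite Borel measure $\mu_Y$ supported in $B^\alpha(Y_0) \subseteq B^\alpha(Y)$ satisfying $\fd \mu_Y \geq (\hd Y - \eps)/\alpha$. Running $\eps$ through a countable sequence decreasing to $0$ arranges all of this on a single almost sure event.

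On this event the convolution $\mu_X * \mu_Y$ is a finite Borel measure supported in $X + B^\alpha(Y)$. Specialising Theorem \ref{convolution} to $\theta = 1$ and $\lambda = 1$ (so that the two summands on the right hand side become $\sd \mu_X$ and $\fd \mu_Y$) gives
\[
\sd(\mu_X * \mu_Y) \;\geq\; \sd \mu_X + \fd \mu_Y \;\geq\; \hd X + \frac{\hd Y}{\alpha} - \eps\Bigl(1 + \tfrac{1}{\alpha}\Bigr).
\]
In the regime $\alpha < \hd Y / (d - \hd X)$, equivalently $\hd X + \hd Y/\alpha > d$, choosing $\eps$ small enough makes the right hand side exceed $d$, so that $\mu_X * \mu_Y \in L^2(\rd)$ by the Sobolev-dimension criterion recalled in Corollary \ref{sobolev}; hence $X + B^\alpha(Y)$ has positive $d$-dimensional Lebesgue measure. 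In the complementary regime $\hd X + \hd Y/\alpha \leq d$ we instead use $\hd(X + B^\alpha(Y)) \geq \min\{d, \sd(\mu_X * \mu_Y)\}$ and let $\eps \to 0$ to obtain the claimed lower bound.

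The argument is short because Theorem \ref{convolution} already does the heavy lifting; the only real point requiring care is the quantifier arrangement. Kahane's result is almost sure for a fixed compact $Y_0$, so to obtain a single almost sure event on which the bound holds with the original $Y$ and for all small $\eps$ simultaneously, I need to apply it along a countable sequence of approximating pairs $(Y_0, \eps)$ and intersect the resulting almost sure events. With that in place, everything else is a direct specialisation of results already established in the paper.
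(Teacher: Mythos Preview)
Your proposal is correct and matches the paper's approach exactly: the paper states only that the corollary follows by combining \eqref{kahaneeq} with Theorem \ref{convolution}, and you have filled in precisely those details, including the standard device of passing to compact $Y_0\subseteq Y$ and running a countable sequence of $\eps$'s to collapse the almost sure events.
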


Crucial to the above result is that the image $B^\alpha(Y)$ is a Salem set almost surely. We are interested in the following variant where this is no longer the case.  Let $V$ be a $k$-dimensional subspace of $\rd$ for integers $1 \leq k < d$ and $X\subseteq \rd$ and $Y \subseteq \mathbb{R}^n$ be  non-empty Borel sets. Let $B^\alpha: \mathbb{R}^n \to V$ be index $\alpha$ fractional Brownian motion with $V$ identified with $\mathbb{R}^k$.  We are interested in almost sure lower bounds for the dimension of $X+ B^\alpha(Y)$.  However,  this time  $B^\alpha(Y)$ has Fourier dimension 0 and so non-trivial estimates are not possible using only the Fourier dimension. If $\hd X \geq k$, then we cannot improve on the trivial estimate $\hd (X+ B^\alpha(Y)) \geq \hd X$ since, for example, $X$ could be a subset of $V$ with dimension $k$.  However, if $\hd X < k$, then we can use the Fourier spectrum to derive non-trivial bounds.  Moreover, no matter what values of $\alpha$,  $\hd X$ and $\hd Y$ we assume, we can never conclude that $X+ B^\alpha(Y)$ has positive $d$-dimensional Lebesgue measure in general.  For example, consider $X \subseteq V \times F$ for a set $F\subseteq V^\perp$ with zero  $(d-k)$-dimensional Lebesgue measure.  In this case $X+ B^\alpha(Y) \subseteq V \times F$   has zero $d$-dimensional Lebesgue measure.

\begin{cor} \label{randomsum}
Let $V$ be a $k$-dimensional subspace of $\rd$ for integers $1 \leq k < d$ and $X\subseteq \rd$ and $Y \subseteq \mathbb{R}^n$ be  non-empty Borel sets. Let $B^\alpha: \mathbb{R}^n \to V$ be index $\alpha$ fractional Brownian motion with $V$ identified with $\mathbb{R}^k$.   If $\hd X < k$, then almost surely
\[
\hd (X+ B^\alpha(Y)) \geq   \min\left\{ \hd X + \frac{\hd Y}{\alpha}-\frac{\hd X \hd Y}{k\alpha} , k\right\}.
\]
\end{cor}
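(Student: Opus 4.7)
The plan is to build a measure on the sumset as a convolution $\mu * \nu$, where $\mu$ is a Frostman-type measure on $X$ and $\nu$ is the $\rd$-pushforward of a fractional Brownian image measure living in $V$, and to estimate $\sd(\mu*\nu)$ using Theorem \ref{convolution}. The essential observation is that although $\fd B^\alpha(Y) = 0$ as a subset of $\rd$ (it lies in the proper affine subspace $V$), the embedding formula in Theorem \ref{embedding} nevertheless provides a \emph{non-affine} lower bound on the Fourier spectrum of $\nu$, and combining this with the trivial concavity bound on $\fs \mu$ in the convolution formula produces the mixing term $\hd X\,\hd Y/(k\alpha)$.

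Fix $s < \hd X$ with $s < k$, and $t' < \hd Y$. By the potential-theoretic method there is a finite Borel measure $\mu$ supported on (a Borel subset of) $X$ with $\sd \mu \geq s$; since $\fd \mu \geq 0$, concavity of the Fourier spectrum (Theorem \ref{concave}) gives $\fs \mu \geq \theta s$ for every $\theta \in [0,1]$. Pick a compact $Y_0 \subseteq Y$ with $\hd Y_0 > t'$; by Kahane's theorem \eqref{kahaneeq}, almost surely $B^\alpha(Y_0)$ supports a finite Borel measure $\tilde\nu$ (viewed in $V \cong \mathbb{R}^k$) with $\fd \tilde\nu \geq t := t'/\alpha$. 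Writing $\nu$ for the pushforward of $\tilde\nu$ under the inclusion $V \hookrightarrow \rd$, Theorem \ref{embedding} together with monotonicity of $\fs$ in $\theta$ yields
\[
\fs \nu \;=\; \min\{\theta k,\, \fs \tilde\nu\} \;\geq\; \min\{\theta k,\, t\} \qquad (\theta \in [0,1]).
\]

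Since $\mu * \nu$ is supported on $X + B^\alpha(Y_0) \subseteq X + B^\alpha(Y)$, Theorem \ref{convolution} applied at $\theta = 1$ gives
\[
\sd(\mu * \nu) \;\geq\; \sup_{\lambda \in [0,1]} \Bigl( \lambda s + \min\{(1-\lambda)k,\, t\} \Bigr).
\]
This is an elementary piecewise-linear optimisation: if $t \geq k$ the expression is maximised at $\lambda = 0$ with value $k$, and if $t < k$ the two linear branches meet at the breakpoint $\lambda = 1 - t/k$, giving value $s + t - st/k$. In either case the supremum equals $\min\{s + t - st/k,\, k\}$. As this bound is strictly less than $d$, the standard equivalence $\mathcal{J}_{r,1}(\mu*\nu) \approx \mathcal{I}_r(\mu*\nu)$ for $r \in (0,d)$ combined with Frostman's lemma converts it into the almost sure bound $\hd(X + B^\alpha(Y)) \geq \min\{s + t - st/k,\, k\}$. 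Letting $s \to \hd X$ and $t' \to \hd Y$ yields the conclusion.

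The main obstacle is bookkeeping: one must carefully track how the non-affine embedding bound $\min\{\theta k,t\}$ for $\nu$ interacts with the concavity bound $\theta s$ for $\mu$ at complementary values of $\theta$, and then solve the resulting piecewise sup. A minor subtlety is that the sumset $X + B^\alpha(Y)$ is typically analytic rather than Borel, but this is harmless since the Hausdorff dimension lower bound is obtained directly from a measure supported on it.
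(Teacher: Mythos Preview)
Your proof is correct and follows essentially the same approach as the paper's: both combine the convolution estimate (Theorem \ref{convolution}), the concavity bound $\dim_{\mathrm{F}}^{\lambda}\mu \geq \lambda\,\sd\mu$, Kahane's result \eqref{kahaneeq}, and the embedding formula (Theorem \ref{embedding}), then solve the same piecewise-linear optimisation. The only difference is cosmetic: the paper works at the level of the Fourier spectrum of sets and passes to measures implicitly, whereas you fix the measures $\mu$ and $\nu$ explicitly from the start and take limits $s\to\hd X$, $t'\to\hd Y$ at the end (which requires, as usual, intersecting countably many full-probability events).
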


\begin{proof}
By Theorem \ref{convolution}, the definition of the Fourier spectrum for sets, \eqref{kahaneeq} and Theorem \ref{embedding},
\begin{align*}
\hd (X+ B^\alpha(Y))  &\geq \min\left\{  \sup_{\theta \in (0,1)} \left(\dim_\textup{F}^{1-\theta} X + \fs B^\alpha(Y) \right), \ d \right\} \\
&\geq \sup_{\theta \in (0,1)} \left( (1-\theta) \hd X + \min\left\{ k \theta , \frac{\hd Y}{\alpha}\right\} \right) \\
&= \min\left\{ \hd X + \frac{\hd Y}{\alpha}-\frac{\hd X \hd Y}{k\alpha} , k\right\}
\end{align*}
as required.
\end{proof}

In the above proof, we were forced to make the `worst case' estimate $\dim_\textup{F}^{1-\theta} X \geq (1-\theta) \hd X$ since we made no assumptions about $X$.  Clear improvements are possible if we make assumptions about the  Fourier dimension  spectrum of $X$ (for example, if $X$ is Salem) but we leave the details to the reader.  We do not know if the almost sure lower bounds given above are the best possible in general. 
\begin{ques}
Is the almost sure  lower bound  from Corollary \ref{randomsum} sharp?
\end{ques}
The bounds are sharp for $\alpha \leq \hd Y/ k$ as the following example will show.  Let $X = E \times F$ where $E \subseteq V$  and $F \subseteq V^\perp$ with $\hd F =0$.   It is possible to arrange for $\hd X = \pd E$ and for this to assume any value in the interval $[0,k]$.  Here $\pd$ is again the packing dimension.  Let $Y \subseteq \mathbb{R}^n$ satisfy $\hd Y = \pd Y$.   Then $X+B^\alpha(Y) = (E+B^\alpha(Y)) \times F$ and (for all realisations of $B^\alpha$)
\begin{align*}
\hd (X+B^\alpha(Y)) \leq \pd (E+B^\alpha(Y)) + \hd F &=  \pd (E+B^\alpha(Y)) \\ 
&\leq \min\left\{ \pd E+\frac{\pd Y}{\alpha}, \, k\right\} \\ 
&= \min\left\{ \hd X+\frac{\hd Y}{\alpha}, \, k\right\}
\end{align*}
which coincides with the general almost sure lower bound for $\alpha \leq \hd Y / k$.

One might hope that this problem could be reduced to the simpler setting of Corollary \ref{randomeasy} by decomposing $X+ B^\alpha(Y)$ into slices parallel to $V$.  We have some doubt over this approach due to the following example. Consider the case where  $X$ is a `graph' in the sense that $X \subseteq V \times F$ with the property that $ X \cap (V+x)$ is a single point for all $x \in F$.  Despite the minimal fibre structure, there is no restriction on $\hd X$ and it can take any value in the interval $[\hd F,\hd F+ k]$.  Then $(X+B^\alpha(Y)) \cap  (V+x)$ is a translation of $B^\alpha(Y)$  for all $x \in F$ and estimating the dimension of $X+B^\alpha(Y)$ using standard slicing methods, e.g. Marstrand's slice theorem, see \cite{mattila, falconer}, cannot do better than
\[
\hd (X+ B^\alpha(Y))  \geq \min\left\{  \frac{\hd Y}{\alpha}, \, k\right\}  + \hd F
\]
almost surely.  For $\alpha > \hd Y/k$ this estimate can be much poorer than the estimate from Corollary \ref{randomsum} and can even be  worse than the trivial lower bound $\hd X$.

\section*{Acknowledgements}

I am grateful to Pertti Mattila,  Amlan Banaji,  Natalia Jurga, St\'ephane Seuret, Dingkang Ren and Ana de Orellana  for helpful comments.

\section*{Conflict of interest statement}

On behalf of all authors, the corresponding author states that there is no conflict of interest.

\section*{Data availability statement}

There is no data associated with this manuscript.

\end{document}